\numberwithin{equation}{section}
\numberwithin{figure}{section}
  \theoremstyle{remark}
  \newtheorem*{notation*}{\protect\notationname}
\theoremstyle{plain}
\newtheorem{thm}{\protect\theoremname}
  \theoremstyle{definition}
  \theoremstyle{plain}
  \newtheorem{lem}[thm]{\protect\lemmaname}
  \theoremstyle{remark}
  \newtheorem{rem}[thm]{\protect\remarkname}
  \providecommand{\definitionname}{Definition}
  \providecommand{\lemmaname}{Lemma}
  \providecommand{\notationname}{Notation}
  \providecommand{\remarkname}{Remark}
\providecommand{\theoremname}{Theorem}
\begin{document}

\begin{frontmatter}

\title{Reconstruction of the electric field of the Helmholtz equation in 3D}


\author[aff1]{Huy Tuan Nguyen\corref{mycorrespondingauthor}}
\cortext[mycorrespondingauthor]{Corresponding author}
\ead{nguyenhuytuan@tdt.edu.vn}

\author[aff2]{Vo Anh Khoa}
\ead{khoa.vo@gssi.infn.it, vakhoa.hcmus@gmail.com}

\author[aff3]{Mach Nguyet Minh}
\ead{mach@math.uni-frankfurt.de}

\author[aff4]{Thanh Tran}
\ead{thanh.tran@unsw.edu.au}

\address[aff1]{Applied Analysis Research Group, Faculty of Mathematics and Statistics, Ton Duc Thang University, Ho Chi Minh City, Vietnam}
\address[aff2]{Mathematics and Computer Science Division,
Gran Sasso Science Institute, L'Aquila, Italy}
\address[aff3]{Department of Mathematics, Goethe University Frankfurt, Germany}
\address[aff4]{School of Mathematics and Statistics, The University of New South Wales, Sydney, Australia}

\begin{abstract}
In this paper, we rigorously investigate the truncation method for the Cauchy problem
of Helmholtz equations which is widely used to model propagation phenomena in physical applications. The method is a well-known approach to the regularization of  several types of ill-posed problems, including the model postulated by Regi\' nska and Regi\' nski \cite{RR06}. Under certain specific assumptions, we examine the ill-posedness of the non-homogeneous problem by exploring the representation of solutions based on Fourier mode. Then the so-called regularized solution is established with respect to a frequency bounded by an appropriate regularization parameter. Furthermore, we provide
a short analysis of the nonlinear forcing term. The main results show the stability as well as the strong convergence confirmed by the error estimates in $L^2$-norm of such regularized solutions. Besides, the regularization parameters are formulated properly. Finally,
some illustrative examples are provided to corroborate our qualitative analysis.
\end{abstract}

\begin{keyword}Cauchy problem\sep Helmholtz equation\sep Ill-posed problem\sep Regularized solution\sep Stability\sep Error estimates.
\MSC[2010] 35K05\sep 35K99\sep 47J06\sep 47H10
\end{keyword}

\end{frontmatter}


\section{Introduction}\label{Sec:intro}

The scalar Helmholtz equation is the well-spring of many streams in both mathematical and engineering problems due to the formal equivalence of the wave equation (and the Schr\" odinger equation for further applications). To set the stage for our problem presented in this paper,  we review very briefly the relation between these equations by direct and  fundamental techniques. In fact, the scalar wave equation that derives the Helmholtz equation is simply expressed by
\begin{equation}
v_{tt}=c^{2}\Delta v+F\left(t,x\right),\label{eq:waveeqn}
\end{equation}
where $c$ denotes the local speed of propagation for waves, and $F\left(t,x\right)$
is a source that injects waves into the solution. Suppose that  we look
for a solution with the wave number $k=1/\lambda>0$ defined by wavelength
$\lambda$, and that the source generates waves of this type, i.e.
\[
v\left(t,x\right)=u\left(x\right)e^{-ikt},\quad F\left(t,x\right)=q\left(x\right)e^{-ikt}.
\]
Substituting these quantities into (\ref{eq:waveeqn}), then dividing by $e^{-ikt}$
and reordering the terms, we obtain
\[
\Delta u\left(x\right)+\frac{k^{2}}{c^{2}}u\left(x\right)=-\frac{q\left(x\right)}{c^{2}}.
\]
This is the three-dimensional non-homogeneous Helmholtz equation in
which we are interested here, which mathematically reads
\begin{eqnarray}
\Delta u\left(x\right)+k^{2}nu\left(x\right) =  -f\left(x\right),\label{eq:1.2}
\end{eqnarray}
where the coefficient $n=1/c^{2}$ is normalized in this paper and in principle known as the
index of refraction (\cite{HMKWH05,Kir11});
and $f\left(x\right)=q\left(x\right)/c^{2}$ represents the forcing
term.

In this paper, we continue the work that commenced in \cite{RR06} by Regi\' nska and Regi\' nski, where the Cauchy problem of Helmholtz equations in particular drives us to the model of reconstruction of the whole radiation field in optoelectronics. This problem is associated with Hadamard-instability due to the fact that the high frequency modes grow exponentially fast. Typically, it would imply the severe ill-posedness as well as the impossibility of solving the problem. Hence, it is customary to overcome this difficulty via a regularization method.

In the context of regularization methods, the homogeneous problem ($f\equiv0$) has been studied mathematically for more than a decade. Recent developments of theoretical computations have been achieved, e.g. the truncation method in \cite{RR06}, the quasi-reversibility-type method in \cite{Xiong10}, and the Tikhonov-type method in \cite{FFC11,QWS09}, where the energy of solution is supposedly known in some certain cases. On the other side, various boundary element regularization methods are solidly compared in \cite{Mar04}. We, nevertheless, stress that the qualitative analysis of instability from the above-mentioned works mostly lacks theoretical validation. Even though the authors in \cite{RR06} rigorously investigated the discernible impact of physical parameters upon independence of solution on given data, a convincing example seems to be needed.

Currently, there have been many other fields of study where the Helmholtz-type equations can be greatly used, such as the influence of the frequency on the stability of Cauchy problems \cite{IK11}, finding the shape of a part of a boundary in \cite{CF09}, regularization of the modified Helmholtz equation in \cite{NTN13} and the problem of identifying source functions in \cite{BN11,LL13}. As we can see from the references, while the literature on the homogeneous problem is very extensive, there would have to emerge some potential field to consider the non-homogeneous problem (and the nonlinear case which we shall figure out later on). Even though the homogeneous problem  has been solved massively, it immediately raises a question: Is it possible to use those methods when the forcing term dominates? It surely requires highly sophisticated techniques and all surrounding issues need to be invented due to the occurrence of new parts. To the best of our knowledge, rigorous investigation of the instability regime and qualitative analysis of the truncation approach have so far not been considered for the Helmholtz equation with genuinely mixed boundary conditions. Moreover, both the Helmholtz equation and the truncation method are ubiquitous in applied mathematics. It is thus imperative to answer the above question with full details.

Summarizing, our main objectives are
\begin{itemize}
\item proving the underlying model is unstable in the sense of Hadamard and giving a theoretical example for such instability;
\item applying the truncation method to define the regularized solution and showing the error estimates which also imply the stability and strong convergence;
\item providing a short extension of the problem with a nonlinear forcing term.
\end{itemize}

The remainder of this paper is organized as follows. In Section \ref{Sec:setting}, we state the model problem, introduce the abstract settings, and herein discuss thoroughly the nature of ill-posedness. Section \ref{Sec:truncate} is devoted to our second objective whilst the third objective is investigated in Section \ref{Sec:extension}. As a result, the error estimates together with stability are proved with respect
to measurement level and we present explicit formulae for the regularization
parameters. Our analysis is mainly based on the Fourier
transform, superposition principle and Parseval's identity. Interestingly, we observe that when choosing a suitable regularization parameter, the convergence rate stays unchanged from the linear case to the nonlinear case. Numerical tests are provided
in Section \ref{Sec:numerical} to illustrate our  method and Section \ref{Sec:con} concludes the paper with a discussion of our results and forthcoming aims.

\section{Abstract settings and Ill-posedness}\label{Sec:setting}
\subsection{Abstract settings}

Let us consider the problem of reconstructing the radiation
field $u=u(x,y,z)$ in the domain $\Omega=\mathbb{R}^{2}\times\left(0,d\right), d>0$. For simplicity, the first two variables will be denoted by ${\bf \xi}:=(x,y)$. The problem
given by \eqref{eq:1.2} along with the boundary conditions can be
written as follows:
\begin{equation}
\begin{cases}
\begin{array}{llll}
\Delta u+k^{2}u=-f, & & \mbox{in}\;\Omega,\\
u\left({\bf \xi},d\right) = g\left(\xi\right), & & \xi\in\mathbb{R}^{2},\\
\partial_{z}u\left(\xi,d\right) = h\left(\xi\right), & & \xi\in\mathbb{R}^{2},\\
u\left(\cdot,z\right)\in L^{2}\left(\mathbb{R}^{2}\right), & & z\in\left[0,d\right],
\end{array}
\end{cases}\label{eq:helmholtz}
\end{equation}
where $\ensuremath{g,h\in L^{2}\left(\mathbb{R}^{2}\right)}$ are
given data and $\ensuremath{f\in L^{2}\left(\Omega\right)}$ plays
a role as the given forcing term.

In practical applications, it is impossible to express exactly the quantities $f,g$ and $h$ since only measured data are known. Therefore, we aim at looking for an
approximate solution of \eqref{eq:helmholtz} inside $\Omega$.

Let us now specify the following assumptions:

\textbf{(A1)} Let $\ensuremath{\ensuremath{g_{\delta},h_{\delta}\in L^{2}\left(\mathbb{R}^{2}\right)}}$
and $\ensuremath{\ensuremath{f_{\delta}\in L^{2}\left(\Omega\right)}}$
be the measured data with the noise level $\ensuremath{\delta>0}$
such that
\[
\left\Vert f_{\delta}-f\right\Vert _{L^{2}\left(\Omega\right)}\le\delta,\quad\left\Vert g_{\delta}-g\right\Vert _{L^{2}\left(\mathbb{R}^{2}\right)}\le\delta,\quad\left\Vert h_{\delta}-h\right\Vert _{L^{2}\left(\mathbb{R}^{2}\right)}\le\delta;
\]

\textbf{(A2)} The solution $u$ in $H^{2}\left(\Omega\right)$ of
the problem \eqref{eq:helmholtz} does uniquely exist;

\textbf{(A3)} The wave number $k$ and the number $d$ satisfy $kd<\frac{\pi}{2}$.

By the definition of Hadamard, let us recall that a problem is well-posed
if it has a unique solution for all admissible data, and furthermore, dependence of this solution on given data is confirmed. In
this section, we prove that the solution is independent of given data
in a specific case and provide an example that small errors in such data may easily destroy
any numerical solution. This thus illuminates why a regularization method should
be collected and applied to solve this problem.

Due to assumption \textbf{(A2)}, let us consider the solution
$\ensuremath{u}$ of \eqref{eq:helmholtz}, such that it is defined
by the sum of two functions $u_{1}$ and $u_{2}$ where $u_{1}\in H^{2}\left(\Omega\right)$
satisfies
\begin{equation}
\begin{cases}
\begin{array}{llll}
\Delta u_{1}+k^{2}u_{1}=0, & & \mbox{in\;}\Omega,\\
u_{1}\left(\xi,0\right)=0, & & \xi\in\mathbb{R}^{2},\\
\partial_{z}u_{1}\left(\xi,d\right)=h\left(\xi\right), & & \xi\in\mathbb{R}^{2},\\
u_{1}\left(\cdot,z\right)\in L^{2}\left(\mathbb{R}^{2}\right), & & z\in\left[0,d\right],
\end{array}
\end{cases}\label{eq:helmholtz1}
\end{equation}
and $u_{2}\in H^{2}\left(\Omega\right)$ satisfies
\begin{equation}
\begin{cases}
\begin{array}{llll}
\Delta u_{2}+k^{2}u_{2}=-f, & & \mbox{in\;}\Omega,\\
u_{2}\left(\xi,d\right)=g\left(\xi\right)-u_{1}\left(\xi,d\right),& & \xi\in\mathbb{R}^{2},\\
\partial_{z}u_{2}\left(\xi,d\right)=0,& & \xi\in\mathbb{R}^{2},\\
u_{2}\left(\cdot,z\right)\in L^{2}\left(\mathbb{R}^{2}\right), & & z\in\left[0,d\right].
\end{array}
\end{cases}\label{eq:helmholtz2}
\end{equation}

\subsection{Ill-posedness}
In order to analyze the ill-posedness of \eqref{eq:helmholtz}, one
naturally needs to take into consideration  \eqref{eq:helmholtz1}
and \eqref{eq:helmholtz2}. In fact, let us first consider the following
lemma which is proved in \cite{RR06}.
It shows that the assumption \textbf{(A3)} implies that
the solution $u_{1}$ depends continuously on the given data $h$,
i.e. the problem \eqref{eq:helmholtz1} is well-posed. For simplicity,
we present the proof without giving full of details.
\begin{lem}
\label{lem:1} Let $u_{1}$ be the solution of problem \eqref{eq:helmholtz1}, then there exists $C>0$ depending
only on $k$ and $d$ such that
\[
\left\Vert u_{1}\right\Vert _{L^{2}\left(\Omega\right)}\le C\left\Vert h\right\Vert _{L^{2}\left(\mathbb{R}^{2}\right)}.
\]
\end{lem}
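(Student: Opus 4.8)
The plan is to pass to the Fourier transform in the transverse variable $\xi=(x,y)$, which converts the constant-coefficient equation into a one-parameter family of ordinary differential equations in $z$. Writing $\hat u_1(\eta,z)$ for the Fourier transform of $u_1(\cdot,z)$ at frequency $\eta\in\mathbb{R}^2$, the equation $\Delta u_1+k^2u_1=0$ becomes
\[
\partial_z^2\hat u_1(\eta,z)=\left(|\eta|^2-k^2\right)\hat u_1(\eta,z),
\]
and the boundary conditions turn into $\hat u_1(\eta,0)=0$ together with $\partial_z\hat u_1(\eta,d)=\hat h(\eta)$. Solving this for each fixed $\eta$ yields $\hat u_1(\eta,z)=m(\eta,z)\,\hat h(\eta)$ for an explicit scalar multiplier $m$; since the target is an $L^2$ bound, Parseval's identity reduces the whole statement to establishing a uniform bound on $|m(\eta,z)|$ over $\eta\in\mathbb{R}^2$ and $z\in[0,d]$.

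First I would solve the ODE in the three regimes determined by the sign of $|\eta|^2-k^2$. For the high frequencies $|\eta|>k$, setting $\mu=\sqrt{|\eta|^2-k^2}$, the condition at $z=0$ forces a pure hyperbolic sine and matching the Neumann datum at $z=d$ gives
\[
m(\eta,z)=\frac{\sinh(\mu z)}{\mu\cosh(\mu d)}.
\]
For the low frequencies $|\eta|<k$, setting $\nu=\sqrt{k^2-|\eta|^2}$, the same computation produces the oscillatory multiplier $m(\eta,z)=\sin(\nu z)/\left(\nu\cos(\nu d)\right)$, and the borderline case $|\eta|=k$ gives the limiting value $m(\eta,z)=z$.

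It remains to bound $|m|$ uniformly, and this is exactly where assumption \textbf{(A3)} is indispensable. In the hyperbolic regime, monotonicity of $\sinh$ on $[0,d]$ gives $\sinh(\mu z)\le\sinh(\mu d)$, whence $|m|\le\tanh(\mu d)/\mu\le d$ by the elementary inequality $\tanh x\le x$. In the oscillatory regime the only danger is the factor $1/\cos(\nu d)$; here $\nu\le k$ forces $\nu d\le kd<\pi/2$, so $\cos(\nu d)\ge\cos(kd)>0$ stays bounded away from zero, and combined with $|\sin(\nu z)|\le\nu z\le\nu d$ this yields $|m|\le d/\cos(kd)$. Taking the larger bound, $|m(\eta,z)|\le d/\cos(kd)$ for all admissible $\eta$ and $z$, so Parseval's identity gives
\[
\left\Vert u_1\right\Vert_{L^2(\Omega)}^2=\int_0^d\!\!\int_{\mathbb{R}^2}|m(\eta,z)|^2|\hat h(\eta)|^2\,d\eta\,dz\le\frac{d^3}{\cos^2(kd)}\left\Vert h\right\Vert_{L^2(\mathbb{R}^2)}^2,
\]
so that $C=d^{3/2}/\cos(kd)$ suffices. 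The hard part is not the explicit integration of the ODE but the uniform control of the oscillatory multiplier: without the hypothesis $kd<\pi/2$ the denominator $\cos(\nu d)$ can vanish at some admissible frequency, the multiplier becomes unbounded, and continuous dependence fails --- which is precisely the well-posedness threshold that \textbf{(A3)} is designed to enforce.
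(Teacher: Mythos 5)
Your proof is correct and follows essentially the same route as the paper: Fourier transform in $\xi$, the explicit three-regime multiplier (which matches the paper's formulas exactly, with $\rho$ in place of your $\eta$), a uniform bound exploiting \textbf{(A3)} to keep $\cos(\nu d)$ away from zero, and Parseval's identity. Your constant $C=d^{3/2}/\cos(kd)$ is marginally weaker than the paper's $C=\sqrt{d}\max\left\{ d,\tan(kd)/k\right\}$ (since $\sin(kd)\le kd$), but this is immaterial to the statement, and you in fact supply the uniform-multiplier estimates that the paper delegates to \cite{RR06}.
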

\begin{proof}
Since $u_{1}\left(\cdot,z\right)\in H^{2}\left(\mathbb{R}^{2}\right)$
for each $z\in\left[0,d\right]$, we apply the Fourier transform for
two-dimensional cases with respect to variable $\xi=(x,y)\in\mathbb{R}^{2}$
as follows:
\[
\hat{u}_{1}\left(\rho,z\right)=\int_{\mathbb{R}^{2}}u_{1}\left(\xi,z\right)e^{-2\pi i\left\langle \rho,\xi\right\rangle }d\xi,
\]
where $\ensuremath{\rho=\left(\rho_{1},\rho_{2}\right)\in\mathbb{R}^{2}}$
and $\ensuremath{\left\langle \rho,\xi\right\rangle =\rho_{1}x+\rho_{2}y}$.

This function significantly satisfies the problem which is constructed
from \eqref{eq:helmholtz1} in terms of the Fourier transform:
\begin{equation}
\begin{cases}
\begin{array}{llll}
\left(\hat{u}_{1}\right)_{zz}\left(\rho,z\right)=\left(\left|\rho\right|^{2}-k^{2}\right)\hat{u}_{1}\left(\rho,z\right),& & \rho\in\mathbb{R}^{2},z\in\left(0,d\right),\\
\hat{u}_{1}\left(\rho,0\right)=0,& &\rho\in\mathbb{R}^{2},\\
\partial_{z}\hat{u}_{1}\left(\rho,d\right)=\hat{h}\left(\rho\right),& &\rho\in\mathbb{R}^{2}.
\end{array}
\end{cases}\label{eq:fourier1}
\end{equation}

For the sake of simplicity, we define by $\lambda_{\rho,k}:=\left|\rho\right|^{2}-k^{2},$
and set
\begin{equation}
\begin{array}{llllll}
A_{1}:=\left\{\rho\in\mathbb{R}^2:\lambda_{\rho,k}>0\right\} ,\quad & A_{2}:=\left\{ \rho\in\mathbb{R}^2:\lambda_{\rho,k}=0\right\} ,\\
A_{3}:=\left\{\rho\in\mathbb{R}^2:\lambda_{\rho,k}<0\right\} ,\quad & A\;\,:=A_{1}\cup A_{2}\cup A_{3}.
\end{array}\label{eq:A123}
\end{equation}

By direct computation, the solution of problem \eqref{eq:fourier1}
can be represented by
\[
\hat{u}_{1}\left(\rho,z\right)=\left\{ \begin{array}{cc}
{\displaystyle \frac{\hat{h}\left(\rho\right)\sinh\left(z\sqrt{\lambda_{\rho,k}}\right)}{\sqrt{\lambda_{\rho,k}}\cosh\left(d\sqrt{\lambda_{\rho,k}}\right)},} & \mbox{if}\;\rho\in A_{1},\\
z\hat{h}\left(\rho\right), & \mbox{if}\;\rho\in A_{2},\\
{\displaystyle \frac{\hat{h}\left(\rho\right)\sin\left(z\sqrt{-\lambda_{\rho,k}}\right)}{\sqrt{-\lambda_{\rho,k}}\cos\left(d\sqrt{-\lambda_{\rho,k}}\right)},} & \mbox{if}\;\rho\in A_{3}.
\end{array}\right.
\]

Hence, we obtain the desired result with $\ensuremath{C=\sqrt{d}\max\left\{ d,\dfrac{\tan\left(dk\right)}{k}\right\} }$
by the same techniques  studied in \cite{RR06}.

\end{proof}

Thanks to Lemma \ref{lem:1}, the problem \eqref{eq:helmholtz} can be reduced to
the problem \eqref{eq:helmholtz2}. This means that we can study the
problem \eqref{eq:helmholtz} with $h\left(\xi\right)=0$ provided \textbf{(A3)} holds.

In the same strategy, we employ the Fourier transform again to consider the following
problem for \eqref{eq:helmholtz2} and note that for simplicity,
we denote the solution by $u$ instead of $u_{2}$:
\begin{equation}
\begin{cases}
\begin{array}{llll}
\hat{u}_{zz}\left(\rho,z\right)-\lambda_{\rho,k}\hat{u}\left(\rho,z\right)=-\hat{f}\left(\rho,z\right), & & \rho\in\mathbb{R}^{2},z\in\left(0,d\right),\\
\hat{u}\left(\rho,d\right)=\hat{g}\left(\rho\right),& &\rho\in\mathbb{R}^{2},\\
\partial_{z}\hat{u}\left(\rho,d\right)=0, & &\rho\in\mathbb{R}^{2}.
\end{array}
\end{cases}\label{eq:fourier2}
\end{equation}
Based on \eqref{eq:A123}, we first consider the case $\ensuremath{\rho\in A_{1}\cup A_{3}}$.
In this case, the solution to \eqref{eq:fourier2} with respect to
$z$ can be found by superposition principle, namely $u=w_{1}+w_{2}$ is presented,
where  the complementary solution $w_{1}$ satisfies
\begin{equation}
\begin{cases}
\begin{array}{llll}
\hat{u}_{zz}\left(\rho,z\right)-\lambda_{\rho,k}\hat{u}\left(\rho,z\right)=0, & & \rho\in A_{1}\cup A_{3},z\in\left(0,d\right),\\
\hat{u}\left(\rho,d\right)=\hat{g}\left(\rho\right), & &\\
\partial_{z}\hat{u}\left(\rho,d\right)=0, & &,
\end{array}
\end{cases}\label{eq:fourier21}
\end{equation}
and  the particular solution $w_{2}$ satisfies
\begin{equation}
\begin{cases}
\begin{array}{llll}
\hat{u}_{zz}\left(\rho,z\right)-\lambda_{\rho,k}\hat{u}\left(\rho,z\right)=-\hat{f}\left(\rho,z\right),& & \rho\in A_{1}\cup A_{3},z\in\left(0,d\right),\\
\hat{u}\left(\rho,d\right)=0,& &\\
\partial_{z}\hat{u}\left(\rho,d\right)=0, & &.
\end{array}
\end{cases}\label{eq:fourier22}
\end{equation}

We now start computing these two solutions by the following lemmas.

\begin{lem}
\label{lem:2}The solution $w_{1}$ to \eqref{eq:fourier21} has the
form
\begin{equation}
w_{1}\left(\rho,z\right)=\begin{cases}
\hat{g}\left(\rho\right)\cosh\left(\left(d-z\right)\sqrt{\lambda_{\rho,k}}\right), & \rho\in A_{1},\\
\hat{g}\left(\rho\right)\cos\left(\left(d-z\right)\sqrt{-\lambda_{\rho,k}}\right), & \rho\in A_{3}.
\end{cases}\label{eq:w1}
\end{equation}
\end{lem}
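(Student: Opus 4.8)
The plan is to regard $\rho$ as a fixed parameter, so that \eqref{eq:fourier21} becomes, for each $\rho \in A_1 \cup A_3$, a homogeneous linear second-order ODE in the single variable $z$ with constant coefficient $\lambda_{\rho,k}$, supplemented by the two Cauchy conditions $\hat{u}(\rho,d) = \hat{g}(\rho)$ and $\partial_z \hat{u}(\rho,d) = 0$ imposed at the same endpoint $z = d$. First I would split according to the sign of $\lambda_{\rho,k}$: on $A_1$ we have $\lambda_{\rho,k} > 0$ and the characteristic roots are the real numbers $\pm\sqrt{\lambda_{\rho,k}}$, giving a hyperbolic fundamental system; on $A_3$ we have $-\lambda_{\rho,k} > 0$ and the roots are purely imaginary, $\pm i\sqrt{-\lambda_{\rho,k}}$, giving a trigonometric fundamental system.

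The key simplification is to express the general solution in the fundamental system centred at the endpoint $z = d$ rather than at $z = 0$. Introducing $s := d - z$ turns $\partial_z$ into $-\partial_s$ and leaves the equation unchanged, so on $A_1$ I would write $w_1 = A\cosh\!\left(s\sqrt{\lambda_{\rho,k}}\right) + B\sinh\!\left(s\sqrt{\lambda_{\rho,k}}\right)$, and on $A_3$ write $w_1 = A\cos\!\left(s\sqrt{-\lambda_{\rho,k}}\right) + B\sin\!\left(s\sqrt{-\lambda_{\rho,k}}\right)$. Because both Cauchy data are prescribed at $z = d$, i.e. at $s = 0$, the two conditions decouple immediately: evaluating at $s = 0$ gives $A = \hat{g}(\rho)$ from the Dirichlet condition, while differentiating and evaluating at $s = 0$ forces $B\sqrt{\lambda_{\rho,k}} = 0$ (respectively $B\sqrt{-\lambda_{\rho,k}} = 0$) from the Neumann condition, whence $B = 0$ since $\lambda_{\rho,k} \neq 0$ on $A_1 \cup A_3$. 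Substituting back $s = d - z$ yields exactly the two branches claimed in \eqref{eq:w1}.

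There is no genuine analytical obstacle here, since the statement is an elementary ODE computation; the only thing to be careful about is the bookkeeping across the two sign regimes and the chain-rule sign in $\partial_z = -\partial_s$, which is precisely what the shifted basis is designed to keep transparent. I would close by remarking that the non-vanishing of $\sqrt{\pm\lambda_{\rho,k}}$ on $A_1 \cup A_3$ — the degenerate case $\lambda_{\rho,k} = 0$, that is $\rho \in A_2$, being excluded at this stage — is exactly what guarantees $B = 0$ and hence the uniqueness of the representation.
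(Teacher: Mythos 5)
Your proposal is correct and follows essentially the same route as the paper: both treat \eqref{eq:fourier21} as a constant-coefficient ODE in $z$ for each fixed $\rho$ and impose the two Cauchy conditions at $z=d$. The only difference is cosmetic bookkeeping --- the paper works with the exponential basis $C_{1}e^{z\sqrt{\lambda_{\rho,k}}}+C_{2}e^{-z\sqrt{\lambda_{\rho,k}}}$ and eliminates $C_2$ via the Neumann condition, whereas your shifted basis in $s=d-z$ decouples the two conditions immediately, which is slightly cleaner but not a different argument.
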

\begin{proof}
If $\ensuremath{\rho\in A_{1}}$, solving the ordinary differential equation \eqref{eq:fourier21} yields that
\begin{equation}
w_{1}\left(\rho,z\right)=C_{1}\left(\rho\right)e^{z\sqrt{\lambda_{\rho,k}}}+C_{2}\left(\rho\right)e^{-z\sqrt{\lambda_{\rho,k}}}.\label{eq:2.10-1}
\end{equation}
Then, its derivative obeys the relation
\begin{equation}
\frac{\partial w_{1}}{\partial z}\left(\rho,z\right)=\sqrt{\lambda_{\rho,k}}\left[C_{1}\left(\rho\right)e^{z\sqrt{\lambda_{\rho,k}}}-C_{2}\left(\rho\right)e^{-z\sqrt{\lambda_{\rho,k}}}\right].\label{eq:2.10}
\end{equation}
Due to the fact that $\partial_{z}w_{1}\left(\rho,d\right)=0$, it is easy to point
out from \eqref{eq:2.10} that $C_{2}\left(\rho\right)=C_{1}\left(\rho\right)e^{2d\sqrt{\lambda_{\rho,k}}}$.
Therefore, substituting  this quantity into \eqref{eq:2.10-1} we obtain 
\begin{equation}
w_{1}\left(\rho,z\right)=C_{1}\left(\rho\right)\left[e^{z\sqrt{\lambda_{\rho,k}}}+e^{\left(2d-z\right)\sqrt{\lambda_{\rho,k}}}\right].\label{eq:2.12}
\end{equation}
Now it remains to consider the condition $\ensuremath{w_{1}\left(\rho,d\right)=\hat{g}\left(\rho\right)}$.
We get $\ensuremath{C_{1}\left(\rho\right)=\dfrac{1}{2}\hat{g}\left(\rho\right)e^{-d\sqrt{\lambda_{\rho,k}}}}$
and it follows from \eqref{eq:2.12} that
\begin{equation}
w_{1}\left(\rho,z\right)=\frac{1}{2}\hat{g}\left(\rho\right)\left[e^{\left(z-d\right)\sqrt{\lambda_{\rho,k}}}+e^{\left(d-z\right)\sqrt{\lambda_{\rho,k}}}\right]=\hat{g}\left(\rho\right)\cosh\left(\left(d-z\right)\sqrt{\lambda_{\rho,k}}\right).\label{eq:2.13}
\end{equation}

For the case $\ensuremath{\rho\in A_{3}}$,
we do the same vein and obtain
\begin{equation}
w_{1}\left(\rho,z\right)=\hat{g}\left(\rho\right)\cos\left(\left(d-z\right)\sqrt{-\lambda_{\rho,k}}\right).\label{eq:2.14}
\end{equation}
Combining \eqref{eq:2.13} and \eqref{eq:2.14}, we complete the proof
of the lemma.\end{proof}
\begin{lem}
\label{lem:3}The solution $w_{2}$ to \eqref{eq:fourier22} has the
form
\[
w_{2}\left(\rho,z\right)=\begin{cases}
\left(\sqrt{\lambda_{\rho,k}}\right)^{-1}\displaystyle{\int_{z}^{d}}\hat{f}\left(\rho,s\right)\sinh\left(\left(z-s\right)\sqrt{\lambda_{\rho,k}}\right)ds, & \rho\in A_{1},\\
\left(\sqrt{-\lambda_{\rho,k}}\right)^{-1}\displaystyle{\int_{z}^{d}}\hat{f}\left(\rho,s\right)\sin\left(\left(z-s\right)\sqrt{-\lambda_{\rho,k}}\right)ds, & \rho\in A_{3}.
\end{cases}
\]
\end{lem}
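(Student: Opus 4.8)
The plan is to construct the particular solution $w_2$ by the method of variation of parameters (Duhamel's principle), treating the two regimes $\rho\in A_1$ and $\rho\in A_3$ separately, since they correspond to opposite signs of $\lambda_{\rho,k}$ and hence to a hyperbolic versus a trigonometric fundamental system. In each regime the crucial point is to fix the constants of integration so that the particular solution \emph{automatically} inherits the homogeneous terminal conditions $w_2(\rho,d)=0$ and $\partial_z w_2(\rho,d)=0$ prescribed in \eqref{eq:fourier22}; this is arranged by anchoring the variation-of-parameters integral at the endpoint $z=d$ rather than at $z=0$.

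For $\rho\in A_1$ we have $\lambda_{\rho,k}>0$, and a fundamental system for the homogeneous equation $w''-\lambda_{\rho,k}w=0$ is $e^{\pm z\sqrt{\lambda_{\rho,k}}}$, with Wronskian $-2\sqrt{\lambda_{\rho,k}}$. The associated causal kernel is $K(z,s)=\sinh\bigl((z-s)\sqrt{\lambda_{\rho,k}}\bigr)/\sqrt{\lambda_{\rho,k}}$, the unique homogeneous solution in $z$ satisfying $K(s,s)=0$ and $\partial_z K(z,s)\big|_{z=s}=1$. Writing the right-hand side of \eqref{eq:fourier22} as $-\hat f(\rho,z)$ and integrating the Duhamel representation from $d$ down to $z$ gives
\[
w_2(\rho,z)=\int_d^z K(z,s)\bigl(-\hat f(\rho,s)\bigr)\,ds=\frac{1}{\sqrt{\lambda_{\rho,k}}}\int_z^d \hat f(\rho,s)\,\sinh\bigl((z-s)\sqrt{\lambda_{\rho,k}}\bigr)\,ds,
\]
which is exactly the claimed expression. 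Since the integral collapses to $\int_d^d$ at $z=d$, the first terminal condition holds; differentiating once yields $\partial_z w_2(\rho,z)=\int_z^d \hat f(\rho,s)\cosh\bigl((z-s)\sqrt{\lambda_{\rho,k}}\bigr)\,ds$, which again vanishes at $z=d$.

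For $\rho\in A_3$ we have $\lambda_{\rho,k}<0$, so the homogeneous equation $w''+(-\lambda_{\rho,k})w=0$ admits the fundamental system $\cos\bigl(z\sqrt{-\lambda_{\rho,k}}\bigr)$, $\sin\bigl(z\sqrt{-\lambda_{\rho,k}}\bigr)$ with Wronskian $\sqrt{-\lambda_{\rho,k}}$. Repeating the identical construction with the kernel $\sin\bigl((z-s)\sqrt{-\lambda_{\rho,k}}\bigr)/\sqrt{-\lambda_{\rho,k}}$ produces the second branch of the asserted formula. The two cases could be unified via the identity $\sinh(i\theta)=i\sin\theta$, but keeping them separate is cleaner and mirrors the treatment of $w_1$ in Lemma \ref{lem:2}.

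It then remains to confirm that each expression genuinely solves the ODE, which I would do by differentiating the variable-limit integral a second time through the Leibniz rule: the boundary term from the moving lower limit contributes $-\hat f(\rho,z)\cosh(0)=-\hat f(\rho,z)$ in the $A_1$ case (resp. $-\hat f(\rho,z)\cos(0)$ in the $A_3$ case), while the remaining interior term reproduces $\lambda_{\rho,k}w_2$, so that $\partial_{zz}w_2-\lambda_{\rho,k}w_2=-\hat f(\rho,z)$ precisely. The only point requiring care in the whole argument is the correct application of the Leibniz rule at the moving endpoint $z$—in particular tracking the boundary contribution that regenerates the source term—but once the proper Green's kernel is identified, the verification is entirely routine.
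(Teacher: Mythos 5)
Your proposal is correct and takes essentially the same approach as the paper: both are variation of parameters, with the integration anchored at $z=d$ so that the particular solution inherits the terminal conditions $w_2(\rho,d)=\partial_z w_2(\rho,d)=0$, yielding the causal $\sinh$ (resp.\ $\sin$) kernel. The paper carries out the computation explicitly by solving the system for $\partial_z F$, $\partial_z G$ and determining the constants $C_1(\rho)$, $C_2(\rho)$ from the terminal conditions, whereas you write the Duhamel kernel directly and verify via the Leibniz rule --- a difference of packaging, not of substance.
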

\begin{proof}
The particular solution $w_{2}$ in the case $\rho\in A_{1}$
is
\begin{equation}
w_{2}\left(\rho,z\right)=F\left(\rho,z\right)e^{z\sqrt{\lambda_{\rho,k}}}+G\left(\rho,z\right)e^{-z\sqrt{\lambda_{\rho,k}}}.\label{eq:2.15-1}
\end{equation}
In order to compute $F$ and $G$, one needs to solve the following
system:
\begin{equation}
\begin{cases}
\frac{\partial F}{\partial z}\left(\rho,z\right)e^{z\sqrt{\lambda_{\rho,k}}}+\frac{\partial G}{\partial z}\left(\rho,z\right)e^{-z\sqrt{\lambda_{\rho,k}}}=0,\\
\frac{\partial F}{\partial z}\left(\rho,z\right)e^{z\sqrt{\lambda_{\rho,k}}}-\frac{\partial G}{\partial z}\left(\rho,z\right)e^{-z\sqrt{\lambda_{\rho,k}}}=-\left(\sqrt{\lambda_{\rho,k}}\right)^{-1}\hat{f}\left(\rho,z\right).
\end{cases}\label{eq:2.15}
\end{equation}
By using elementary computation, the system \eqref{eq:2.15} is thus
equivalent to
\begin{equation}
\begin{cases}
\frac{\partial F}{\partial z}\left(\rho,z\right)=-\frac{1}{2}\left(\sqrt{\lambda_{\rho,k}}\right)^{-1}\hat{f}\left(\rho,z\right)e^{-z\sqrt{\lambda_{\rho,k}}},\\
\frac{\partial G}{\partial z}\left(\rho,z\right)=\frac{1}{2}\left(\sqrt{\lambda_{\rho,k}}\right)^{-1}\hat{f}\left(\rho,z\right)e^{z\sqrt{\lambda_{\rho,k}}}.
\end{cases}\label{eq:2.16}
\end{equation}
It follows from \eqref{eq:2.16} that
\begin{equation}
F\left(\rho,z\right)=-\frac{1}{2}\left(\sqrt{\lambda_{\rho,k}}\right)^{-1}\int_{0}^{z}\hat{f}\left(\rho,s\right)e^{-s\sqrt{\lambda_{\rho,k}}}ds+C_{1}\left(\rho\right),\label{eq:F}
\end{equation}
\begin{equation}
G\left(\rho,z\right)=\frac{1}{2}\left(\sqrt{\lambda_{\rho,k}}\right)^{-1}\int_{0}^{z}\hat{f}\left(\rho,s\right)e^{s\sqrt{\lambda_{\rho,k}}}ds+C_{2}\left(\rho\right).\label{eq:G}
\end{equation}
Moreover, from \eqref{eq:2.15-1} we have
\begin{equation}
\frac{\partial w_{2}}{\partial z}\left(\rho,z\right)=\left[\frac{\partial F}{\partial z}\left(\rho,z\right)+\sqrt{\lambda_{\rho,k}}F\left(\rho,z\right)\right]e^{z\sqrt{\lambda_{\rho,k}}}+\left[\frac{\partial G}{\partial z}\left(\rho,z\right)-\sqrt{\lambda_{\rho,k}}G\left(\rho,z\right)\right]e^{-z\sqrt{\lambda_{\rho,k}}}.\label{eq:2.20}
\end{equation}
We plug the boundary conditions $w_{2}\left(\rho,d\right)=\partial_{z}w_{2}\left(\rho,d\right)=0$ into \eqref{eq:2.15-1} and \eqref{eq:2.20} and
obtain the following system:
\[
\begin{cases}
F\left(\rho,d\right)e^{d\sqrt{\lambda_{\rho,k}}}+G\left(\rho,d\right)e^{-d\sqrt{\lambda_{\rho,k}}}=0,\\
F\left(\rho,d\right)e^{d\sqrt{\lambda_{\rho,k}}}-G\left(\rho,d\right)e^{-d\sqrt{\lambda_{\rho,k}}}=0.
\end{cases}
\]
As a consequence, the terms $C_{1}\left(\rho\right)$ and $C_{2}\left(\rho\right)$
in \eqref{eq:F}-\eqref{eq:G} can be determined by
\[
C_{1}\left(\rho\right)=\frac{1}{2}\left(\sqrt{\lambda_{\rho,k}}\right)^{-1}\int_{0}^{d}\hat{f}\left(\rho,s\right)e^{-s\sqrt{\lambda_{\rho,k}}}ds,
\]
\[
C_{2}\left(\rho\right)=-\frac{1}{2}\left(\sqrt{\lambda_{\rho,k}}\right)^{-1}\int_{0}^{d}\hat{f}\left(\rho,s\right)e^{s\sqrt{\lambda_{\rho,k}}}ds.
\]
Combining this with \eqref{eq:F}-\eqref{eq:G} and \eqref{eq:2.15-1},
the solution $w_{2}$ is formed by
\begin{equation}
w_{2}\left(\rho,z\right)=\frac{1}{2}\left(\sqrt{\lambda_{\rho,k}}\right)^{-1}\int_{z}^{d}\hat{f}\left(\rho,s\right)\left(e^{\left(z-s\right)\sqrt{\lambda_{\rho,k}}}-e^{\left(s-z\right)\sqrt{\lambda_{\rho,k}}}\right)ds=\left(\sqrt{\lambda_{\rho,k}}\right)^{-1}\int_{z}^{d}\hat{f}\left(\rho,s\right)\sinh\left(\left(z-s\right)\sqrt{\lambda_{\rho,k}}\right)ds.\label{eq:2.21}
\end{equation}

For the case $\ensuremath{\rho\in A_{3}}$,
we also do the same manner and obtain
\begin{equation}
w_{2}\left(\rho,z\right)=\left(\sqrt{-\lambda_{\rho,k}}\right)^{-1}\int_{z}^{d}\hat{f}\left(\rho,s\right)\sin\left(\left(z-s\right)\sqrt{-\lambda_{\rho,k}}\right)ds.\label{eq:2.22}
\end{equation}
Hence, \eqref{eq:2.21}-\eqref{eq:2.22} give the desired result.
\end{proof}
In Lemma \ref{lem:2}-\ref{lem:3}, the solution of the problem
\eqref{eq:fourier2} for $\rho\in A_{1}\cup A_{3}$
has been formulated. It remains to find the solution when the case
$A_{2}$ happens. Let us show it in the following lemma.

\begin{lem}
\label{lem:4}Let $w$ be the solution of the following problem
\[
\begin{cases}
\begin{array}{llll}
\hat{u}_{zz}\left(\rho,z\right)=-\hat{f}\left(\rho,z\right),& &\rho\in\mathbb{R}^{2},z\in\left(0,d\right),\\
\hat{u}\left(\rho,d\right)=\hat{g}\left(\rho\right),& &\rho\in\mathbb{R}^{2},\\
\partial_{z}\hat{u}\left(\rho,d\right)=0,& &\rho\in\mathbb{R}^{2}.
\end{array}
\end{cases}
\]
Then, we have
\[
w\left(\rho,z\right)={\displaystyle \hat{g}\left(\rho\right)+\int_{d}^{z}\int_{s}^{d}\hat{f}\left(\rho,\gamma\right)d\gamma ds}.
\]
\end{lem}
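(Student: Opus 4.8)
The plan is to treat this as the degenerate case $\lambda_{\rho,k}=0$ (i.e.\ $\rho\in A_2$), in which the governing ordinary differential equation collapses to the elementary form $\hat{u}_{zz}(\rho,z)=-\hat{f}(\rho,z)$. Since both boundary conditions $\hat{u}(\rho,d)=\hat{g}(\rho)$ and $\partial_z\hat{u}(\rho,d)=0$ are prescribed at the single endpoint $z=d$, the problem is effectively an initial-value problem anchored at $d$, and the solution can be recovered simply by integrating twice, each time from $d$ to $z$. In contrast to Lemmas \ref{lem:2}--\ref{lem:3}, no decomposition into complementary and particular parts is needed here.

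First I would integrate the equation once in $z$ from $d$ to the running point. Writing $\partial_z w(\rho,z)-\partial_z w(\rho,d)=\int_d^z\hat{u}_{zz}(\rho,t)\,dt=-\int_d^z\hat{f}(\rho,t)\,dt$ and invoking $\partial_z w(\rho,d)=0$ yields
\[
\partial_z w(\rho,z)=-\int_d^z\hat{f}(\rho,\gamma)\,d\gamma=\int_z^d\hat{f}(\rho,\gamma)\,d\gamma .
\]
Then I would integrate a second time from $d$ to $z$, using $w(\rho,d)=\hat{g}(\rho)$, so that $w(\rho,z)=\hat{g}(\rho)+\int_d^z\partial_z w(\rho,s)\,ds$; substituting the expression for $\partial_z w$ just obtained gives exactly
\[
w(\rho,z)=\hat{g}(\rho)+\int_d^z\int_s^d\hat{f}(\rho,\gamma)\,d\gamma\,ds,
\]
which is the asserted formula.

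As a consistency check I would differentiate the resulting expression twice to confirm $\partial_z^2 w=-\hat{f}$, and evaluate at $z=d$ to verify that both the condition $w(\rho,d)=\hat{g}(\rho)$ and $\partial_z w(\rho,d)=0$ are recovered, the latter because the inner integral $\int_s^d$ vanishes at $s=d$. The computation carries no genuine difficulty; the only thing to watch is the bookkeeping of the integration limits and signs, since the data sit at $z=d$ rather than at $z=0$, so each antiderivative must be anchored at $d$ in order for the boundary values to appear cleanly. The one conceptual remark worth stating is that $A_2$, the circle $|\rho|=k$, is a set of measure zero in $\mathbb{R}^2$, so this degenerate mode contributes nothing to the $L^2$ estimates derived later; the lemma is recorded only for completeness of the Fourier-mode representation.
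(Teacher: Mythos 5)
Your proposal is correct and follows essentially the same route as the paper: both integrate the equation twice from the anchor point $z=d$ (the paper phrases this as applying the Newton--Leibniz formula twice), using $\partial_z\hat{u}(\rho,d)=0$ after the first integration and $\hat{u}(\rho,d)=\hat{g}(\rho)$ after the second to land directly on the stated formula. Your added verification step and the remark that $A_2$ has measure zero are sound but go beyond what the paper records.
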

\begin{proof}
The proof is straightforward. Indeed, using the Newton-Leibniz formula
twice, together with the boundary condition $\partial_{z}\hat{u}\left(\rho,d\right)=0$, we are led to the following equalities:
\[
\partial_{z}\hat{u}\left(\rho,z\right)=\partial_{z}\hat{u}\left(\rho,d\right)+\int_{z}^{d}\hat{f}\left(\rho,s\right)ds=\int_{z}^{d}\hat{f}\left(\rho,s\right)ds.
\]
Therefore, the solution can be expressed as follows:
\[
\hat{u}\left(\rho,z\right)=\hat{u}\left(\rho,d\right)-\int_{z}^{d}\partial_{z}\hat{u}\left(\rho,s\right)ds=\hat{g}\left(\rho\right)+\int_{d}^{z}\int_{s}^{d}\hat{f}\left(\rho,\gamma\right)d\gamma ds.
\]
This completes the proof of the lemma.
\end{proof}

In principle, we have proved the representation of the solution to
the problem \eqref{eq:fourier2}. It is given by
\begin{equation}
\hat{u}\left(\rho,z\right)=\left\{ \begin{array}{cc}
{\displaystyle \hat{g}\left(\rho\right)\cosh\left(\left(d-z\right)\sqrt{\lambda_{\rho,k}}\right)}\\
{\displaystyle +\frac{1}{\sqrt{\lambda_{\rho,k}}}\int_{z}^{d}\hat{f}\left(\rho,s\right)\sinh\left(\left(z-s\right)\sqrt{\lambda_{\rho,k}}\right)ds,} & \mbox{if}\;\rho\in A_{1},\\
{\displaystyle \hat{g}\left(\rho\right)+\int_{d}^{z}\int_{s}^{d}\hat{f}\left(\rho,\gamma\right)d\gamma ds,} & \mbox{if}\;\rho\in A_{2},\\
{\displaystyle {\displaystyle \hat{g}\left(\rho\right)\cos\left(\left(d-z\right)\sqrt{-\lambda_{\rho,k}}\right)}}\\
{\displaystyle +\frac{1}{\sqrt{-\lambda_{\rho,k}}}\int_{z}^{d}\hat{f}\left(\rho,s\right)\sin\left(\left(z-s\right)\sqrt{-\lambda_{\rho,k}}\right)ds,} & \mbox{if}\;\rho\in A_{3}.
\end{array}\right.\label{eq:uhat}
\end{equation}

The sufficient condition for the existence of the Fourier transform
of a function is guaranteed if this function is absolutely integrable. Thus, the regularity
of given functions $g,h$ and $f$ are definitely useful. Moreover, to recover the value of a function
at a point from its inverse transform, a bounded variation
in a neighborhood of that point is needed. In addition, the Fourier
transform of a $L^{2}$-function is guaranteed to be another $L^{2}$-function. Hereby, the integral representation of the explicit solution
$\hat{u}\left(\rho,z\right)$ of the problem \eqref{eq:fourier2}
does exist.

The important thing observed from \eqref{eq:uhat} is that, the terms $\cosh\left((d-z)\sqrt{\lambda_{\rho,k}}\right)$
and $\sinh\left((z-s)\sqrt{\lambda_{\rho,k}}\right)$
include increasing terms $e^{(d-z)\sqrt{\lambda_{\rho,k}}}$ and
$e^{(s-z)\sqrt{\lambda_{\rho,k}}}$, respectively, for $z\le s\le d$. 
Notice that even if the exact Fourier coefficients $\hat{f}$ and $\hat{g}$ may tend to zero
rapidly, computational procedures are in general impossible. A small
perturbation, such as round-off errors, in the data may exacerbate a large
error in the solution. For the sake of clarity, we shall prove below
that the problem \eqref{eq:fourier2} (also the problem \eqref{eq:helmholtz})
is ill-posed in the case $A_{1}$ while it is observed to be well-posed
in the case $A_{2}\cup A_{3}$.

\begin{lem}
If $w$ is the solution of problem \eqref{eq:fourier2} when $\rho\in A_{2}\cup A_{3}$, it depends continuously
on $g$ and $f$ in the sense that
\[
\left\Vert w\right\Vert _{L^{2}\left(\Omega\right)}^{2}\le C\left(\left\Vert g\right\Vert _{L^{2}\left(\mathbb{R}^{2}\right)}^{2}+\left\Vert f\right\Vert _{L^{2}\left(\Omega\right)}^{2}\right),
\]
where $C$ is a positive constant depending only on $d$ and $k$.\end{lem}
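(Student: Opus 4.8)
The plan is to estimate the $L^{2}(\Omega)$-norm of $w$ directly from the explicit Fourier representation \eqref{eq:uhat} by means of Parseval's identity, thereby reducing the whole problem to frequency-wise bounds. First I would write
\[
\left\Vert w\right\Vert _{L^{2}\left(\Omega\right)}^{2}=\int_{A_{2}\cup A_{3}}\int_{0}^{d}\left|\hat{u}\left(\rho,z\right)\right|^{2}dz\,d\rho,
\]
and immediately observe that $A_{2}=\left\{ \rho\in\mathbb{R}^{2}:\left|\rho\right|=k\right\}$ is a circle in $\mathbb{R}^{2}$, hence a set of two-dimensional Lebesgue measure zero; consequently it contributes nothing to the integral, and it suffices to control the integrand on $A_{3}$.

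On $A_{3}$ the representation reads $\hat{u}\left(\rho,z\right)=\hat{g}\left(\rho\right)\cos\left(\left(d-z\right)\sqrt{-\lambda_{\rho,k}}\right)+\left(\sqrt{-\lambda_{\rho,k}}\right)^{-1}\int_{z}^{d}\hat{f}\left(\rho,s\right)\sin\left(\left(z-s\right)\sqrt{-\lambda_{\rho,k}}\right)ds$. I would bound the two summands separately via the elementary inequality $\left(a+b\right)^{2}\le 2a^{2}+2b^{2}$. For the first summand, $\left|\cos\right|\le 1$ gives the pointwise bound $\left|\hat{g}\left(\rho\right)\right|$, whose square integrates in $z$ to $d\left|\hat{g}\left(\rho\right)\right|^{2}$ and then, after integration over $A_{3}$ and an application of Parseval, to a constant multiple of $\left\Vert g\right\Vert _{L^{2}\left(\mathbb{R}^{2}\right)}^{2}$.

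The second summand is where the only genuine subtlety lies. At first sight the prefactor $\left(-\lambda_{\rho,k}\right)^{-1/2}$ blows up as $\rho$ approaches the critical circle $\left|\rho\right|=k$, which would seem to obstruct any uniform estimate. The crucial point is that this apparent singularity is exactly cancelled by the sine: setting $a:=\sqrt{-\lambda_{\rho,k}}>0$, the elementary bound $\left|\sin\left(at\right)\right|\le a\left|t\right|$ yields $\left|a^{-1}\sin\left(\left(z-s\right)a\right)\right|\le\left|z-s\right|\le d$ for every $s\in\left[z,d\right]$, \emph{uniformly in} $\rho$. Thus the kernel stays bounded by $d$ no matter how close $\lambda_{\rho,k}$ is to zero, and the integral term is dominated by $d\int_{0}^{d}\left|\hat{f}\left(\rho,s\right)\right|ds$. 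Identifying this $\left|\sin\left(at\right)\right|\le a\left|t\right|$ cancellation as the mechanism that tames the would-be singularity at $\lambda_{\rho,k}=0$ is the heart of the argument.

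The remaining steps are routine. I would apply the Cauchy--Schwarz inequality in $s$ to pass from the $L^{1}$- to the $L^{2}$-norm, namely $\int_{0}^{d}\left|\hat{f}\left(\rho,s\right)\right|ds\le\sqrt{d}\,\bigl(\int_{0}^{d}\left|\hat{f}\left(\rho,s\right)\right|^{2}ds\bigr)^{1/2}$, so that the square of the second summand is bounded by $d^{3}\int_{0}^{d}\left|\hat{f}\left(\rho,s\right)\right|^{2}ds$; integrating in $z$ over $\left[0,d\right]$ and in $\rho$ over $A_{3}$ and invoking Parseval once more returns a constant multiple of $\left\Vert f\right\Vert _{L^{2}\left(\Omega\right)}^{2}$. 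Collecting the two contributions yields the claimed estimate with a constant $C$ depending only on $d$ (and on $k$ only through the location of $A_{3}$), which establishes the continuous dependence on the data $g$ and $f$ in the well-posed regime $A_{2}\cup A_{3}$.
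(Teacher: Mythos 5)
Your proof is correct, and it departs from the paper's argument at both of its key points, in each case by a more elementary route. For $A_{2}=\left\{ \rho\in\mathbb{R}^{2}:\left|\rho\right|=k\right\}$ the paper does not discard the set: it takes the explicit representation $w\left(\rho,z\right)=\hat{g}\left(\rho\right)+\int_{d}^{z}\int_{s}^{d}\hat{f}\left(\rho,\gamma\right)d\gamma\,ds$ from Lemma \ref{lem:4} and estimates it with H\"older and Cauchy--Schwarz to get $2\left[\left\Vert g\right\Vert _{L^{2}\left(\mathbb{R}^{2}\right)}^{2}+d^{3}\left(d-z\right)\left\Vert f\right\Vert _{L^{2}\left(\Omega\right)}^{2}\right]$; your observation that $A_{2}$ is a circle, hence of planar Lebesgue measure zero and invisible to the $L^{2}$ integral in Parseval's identity, is legitimate for the lemma as stated (an $L^{2}\left(\Omega\right)$ bound) and renders that computation unnecessary, though it would not deliver the pointwise control on $A_{2}$ that the paper's computation incidentally provides. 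On $A_{3}$ the difference is more substantive: the paper tames the factor $\left(\sqrt{-\lambda_{\rho,k}}\right)^{-1}$ via the chain $\left|\sin\left(\left(z-s\right)\sqrt{-\lambda_{\rho,k}}\right)\right|\le\sin\left(\left(d-z\right)\sqrt{-\lambda_{\rho,k}}\right)$, $\sin\le\tan$, and the monotonicity of $x\mapsto\tan x/x$ on $\left(0,\frac{\pi}{2}\right)$, arriving at the kernel bound $\tan\left(dk\right)/k$ and the constant $C=\max\left\{ \tan\left(dk\right)/k,1\right\}$ --- an argument that leans essentially on assumption \textbf{(A3)} ($kd<\frac{\pi}{2}$) to keep all arguments inside $\left(0,\frac{\pi}{2}\right)$. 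Your single inequality $\left|\sin\left(at\right)\right|\le a\left|t\right|$ yields the uniform kernel bound $\left|z-s\right|\le d$ with no trigonometric case analysis, so your constant depends only on $d$, is smaller than the paper's (since $\tan\left(dk\right)/k\ge d$ under \textbf{(A3)}), and, notably, your proof of this lemma never invokes \textbf{(A3)}, showing that continuous dependence on $\left(g,f\right)$ in the regime $A_{2}\cup A_{3}$ holds for arbitrary $k,d>0$; the remaining steps (the splitting $\left(a+b\right)^{2}\le2a^{2}+2b^{2}$, Cauchy--Schwarz in $s$, Parseval) coincide with the paper's.
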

\begin{proof}
The current strategy is completely straightforward since the representation
has been given in \eqref{eq:uhat}.

If $\rho\in A_{2}$,  applying H\"{o}lder's inequality and Lemma \ref{lem:4}, we have:
\[
\left|w\left(\rho,z\right)\right|\le\left|\hat{g}\left(\rho\right)\right|+\int_{z}^{d}\left(d-s\right)\left(\int_{s}^{d}\left|\hat{f}\left(\rho,\gamma\right)\right|^{2}d\gamma\right)^{1/2}ds\le\left|\hat{g}\left(\rho\right)\right|+d\int_{z}^{d}\left(\int_{s}^{d}\left|\hat{f}\left(\rho,\gamma\right)\right|^{2}d\gamma\right)^{1/2}ds.
\]
Thus,  the Cauchy-Schwarz inequality and H\"{o}lder's inequality
imply
\begin{eqnarray}
\int_{\mathbb{R}^{2}}\left|w\left(\rho,z\right)\right|^{2}d\rho & \le & 2\int_{\mathbb{R}^{2}}\left[\left|\hat{g}\left(\rho\right)\right|^{2}+d^{2}\left(\int_{z}^{d}\left(\int_{s}^{d}\left|\hat{f}\left(\rho,\gamma\right)\right|^{2}d\gamma\right)^{1/2}ds\right)^{2}ds\right]d\rho\nonumber \\
 & \le & 2\left[\left\Vert g\right\Vert _{L^{2}\left(\mathbb{R}^{2}\right)}^{2}+d^{2}\left(d-z\right)\int_{\mathbb{R}^{2}}\int_{z}^{d}\int_{s}^{d}\left|\hat{f}\left(\rho,\gamma\right)\right|^{2}d\gamma dsd\rho\right]\nonumber \\
 & \le & 2\left[\left\Vert g\right\Vert _{L^{2}\left(\mathbb{R}^{2}\right)}^{2}+d^{3}\int_{\mathbb{R}^{2}}\int_{z}^{d}\int_{s}^{d}\left|\hat{f}\left(\rho,\gamma\right)\right|^{2}d\gamma dsd\rho\right]\nonumber \\
 & \le & 2\left[\left\Vert g\right\Vert _{L^{2}\left(\mathbb{R}^{2}\right)}^{2}+d^{3}\left(d-z\right)\left\Vert f\right\Vert _{L^{2}\left(\Omega\right)}^{2}\right].\label{eq:2.24}
\end{eqnarray}

If $\rho\in A_{3}$,  the assumption \textbf{(A3)} gives us the following inequalities:
\begin{equation}
0<\left(s-z\right)\sqrt{-\lambda_{\rho,k}}\le\left(d-z\right)\sqrt{-\lambda_{\rho,k}}\le dk<\frac{\pi}{2},\quad z\le s\le d,\label{eq:2.25}
\end{equation}
which easily leads to
\begin{equation}
\left|\sin\left(\left(z-s\right)\sqrt{-\lambda_{\rho,k}}\right)\right|\le\sin\left(\left(s-z\right)\sqrt{-\lambda_{\rho,k}}\right)\le\sin\left(\left(d-z\right)\sqrt{-\lambda_{\rho,k}}\right),\label{eq:2.26}
\end{equation}

\begin{equation}
\tan\left(\left(d-z\right)\sqrt{-\lambda_{\rho,k}}\right)\le\tan\left(d\sqrt{-\lambda_{\rho,k}}\right).\label{eq:2.27}
\end{equation}
In addition, thanks to the fact that the function $\dfrac{\tan x}{x}$ is increasing for $x\in\left(0,\dfrac{\pi}{2}\right)$,
we combine this with \eqref{eq:uhat} and \eqref{eq:2.25}-\eqref{eq:2.27} to  obtain
\begin{eqnarray}
\left|w\left(\rho,z\right)\right| & \le & \cos\left(\left(d-z\right)\sqrt{-\lambda_{\rho,k}}\right)\left|\hat{g}\left(\rho\right)\right|+\frac{1}{\sqrt{-\lambda_{\rho,k}}}\int_{z}^{d}\left|\hat{f}\left(\rho,s\right)\right|\sin\left(\left(d-z\right)\sqrt{-\lambda_{\rho,k}}\right)ds\nonumber \\
 & \le & \left|\hat{g}\left(\rho\right)\right|+\frac{\tan\left(\left(d-z\right)\sqrt{-\lambda_{\rho,k}}\right)}{\sqrt{-\lambda_{\rho,k}}}\int_{z}^{d}\left|\hat{f}\left(\rho,s\right)\right|ds\nonumber \\
 & \le & \left|\hat{g}\left(\rho\right)\right|+\frac{\tan\left(d\sqrt{-\lambda_{\rho,k}}\right)}{\sqrt{-\lambda_{\rho,k}}}\int_{z}^{d}\left|\hat{f}\left(\rho,s\right)\right|ds\nonumber \\
 & \le & \left|\hat{g}\left(\rho\right)\right|+\frac{\tan\left(dk\right)}{k}\int_{0}^{d}\left|\hat{f}\left(\rho,s\right)\right|ds\nonumber \\
 & \le & C\left(\left|\hat{g}\left(\rho\right)\right|+\int_{0}^{d}\left|\hat{f}\left(\rho,s\right)\right|ds\right),\label{eq:2.28-1}
\end{eqnarray}
where we have put $C=\max\left\{ \dfrac{\tan\left(dk\right)}{k},1\right\} $. Next, it follows from H\"{o}lder's inequality that
\begin{equation}
\int_{0}^{d}\left|\hat{f}\left(\rho,s\right)\right|ds\le\sqrt{d}\left(\int_{0}^{d}\left|\hat{f}\left(\rho,s\right)\right|^{2}ds\right)^{1/2}.\label{eq:2.28}
\end{equation}
Applying  the Cauchy-Schwarz inequality and  substituting \eqref{eq:2.28}
into \eqref{eq:2.28-1}, we arrive at
\begin{eqnarray*}
\int_{\mathbb{R}^{2}}\left|w\left(\rho,z\right)\right|^{2}d\rho & \le & C^{2}\int_{\mathbb{R}^{2}}\left(\left|\hat{g}\left(\rho\right)\right|+\int_{0}^{d}\left|\hat{f}\left(\rho,s\right)\right|ds\right)^{2}d\rho\\
 & \le & 2C^{2}\left[\int_{\mathbb{R}^{2}}\left|\hat{g}\left(\rho\right)\right|^{2}d\rho+d\int_{\mathbb{R}^{2}}\int_{0}^{d}\left|\hat{f}\left(\rho,s\right)\right|^{2}dsd\rho\right]\\
 & \le & 2C^{2}\left(\left\Vert g\right\Vert _{L^{2}\left(\mathbb{R}^{2}\right)}^{2}+d\left\Vert f\right\Vert _{L^{2}\left(\Omega\right)}^{2}\right).
\end{eqnarray*}
Hence, we obtain
\[
\left\Vert w\right\Vert _{L^{2}\left(\Omega\right)}^{2}\le2dC^{2}\left(\left\Vert g\right\Vert _{L^{2}\left(\mathbb{R}^{2}\right)}^{2}+d\left\Vert f\right\Vert _{L^{2}\left(\Omega\right)}^{2}\right),
\]
which ends the proof of the lemma.\end{proof}
\begin{lem}
\label{lem:6}The problem \eqref{eq:fourier2} in the case $\rho\in A_{1}$
is ill-posed.\end{lem}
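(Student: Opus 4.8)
The plan is to disprove the continuous dependence of the solution on the data, which is precisely the defect required by the Hadamard definition recalled above. Since it suffices to exhibit the instability for one admissible choice of data, I would set $\hat{f}\equiv0$ so as to isolate the dependence on $g$; then for $\rho\in A_{1}$ the representation \eqref{eq:uhat} reduces to the pure Fourier multiplier
\[
\hat{w}\left(\rho,z\right)=\hat{g}\left(\rho\right)\cosh\left(\left(d-z\right)\sqrt{\lambda_{\rho,k}}\right).
\]
The conceptual obstruction is now transparent: for every fixed $z<d$ the symbol $\cosh\left(\left(d-z\right)\sqrt{\lambda_{\rho,k}}\right)$ is unbounded as $\left|\rho\right|\to\infty$, so the map $g\mapsto w\left(\cdot,z\right)$ cannot be bounded on $L^{2}$. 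The remaining task is to convert this unboundedness into an explicit destabilising sequence.

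Second, I would construct such a sequence concentrated at high frequencies. Fix centres $\rho_{n}^{0}\in\mathbb{R}^{2}$ with $\left|\rho_{n}^{0}\right|=n$, let $D_{n}$ denote the closed unit disk about $\rho_{n}^{0}$, and set $\hat{g}_{n}:=\frac{1}{n}\chi_{D_{n}}$, where $\chi_{D_{n}}$ is the indicator function of $D_{n}$. For $n$ large one has $D_{n}\subset A_{1}$ (since $\left|\rho\right|\ge n-1>k$ there), so $g_{n}$ is a genuine $L^{2}\left(\mathbb{R}^{2}\right)$ datum whose Fourier support lies in the regime claimed by the lemma. By Parseval's identity,
\[
\left\Vert g_{n}\right\Vert _{L^{2}\left(\mathbb{R}^{2}\right)}^{2}=\left\Vert \hat{g}_{n}\right\Vert _{L^{2}\left(\mathbb{R}^{2}\right)}^{2}=\frac{1}{n^{2}}\left|D_{n}\right|=\frac{\pi}{n^{2}}\longrightarrow0,
\]
so the data tend to zero in $L^{2}\left(\mathbb{R}^{2}\right)$.

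Third, I would bound the corresponding solutions $w_{n}$ from below. On $D_{n}$ we have $\sqrt{\lambda_{\rho,k}}=\sqrt{\left|\rho\right|^{2}-k^{2}}\ge\sqrt{\left(n-1\right)^{2}-k^{2}}=:\beta_{n}$ with $\beta_{n}\to\infty$, and since $\cosh t\ge\frac{1}{2}e^{t}$ for $t\ge0$ together with $d-z\ge0$, Parseval's identity gives
\[
\left\Vert w_{n}\left(\cdot,z\right)\right\Vert _{L^{2}\left(\mathbb{R}^{2}\right)}^{2}=\int_{D_{n}}\left|\hat{g}_{n}\left(\rho\right)\right|^{2}\cosh^{2}\left(\left(d-z\right)\sqrt{\lambda_{\rho,k}}\right)d\rho\ge\frac{1}{4}e^{2\left(d-z\right)\beta_{n}}\left\Vert g_{n}\right\Vert _{L^{2}\left(\mathbb{R}^{2}\right)}^{2}.
\]
Integrating in $z$ over $\left(0,d\right)$ then yields
\[
\left\Vert w_{n}\right\Vert _{L^{2}\left(\Omega\right)}^{2}\ge\frac{\pi}{4n^{2}}\int_{0}^{d}e^{2\left(d-z\right)\beta_{n}}dz=\frac{\pi}{4n^{2}}\cdot\frac{e^{2d\beta_{n}}-1}{2\beta_{n}}.
\]
Because $\beta_{n}\sim n$, the exponential factor $e^{2d\beta_{n}}$ overwhelms the algebraic factor $n^{-2}\beta_{n}^{-1}$, whence $\left\Vert w_{n}\right\Vert _{L^{2}\left(\Omega\right)}\to\infty$ while $\left\Vert g_{n}\right\Vert _{L^{2}\left(\mathbb{R}^{2}\right)}\to0$. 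By linearity (equivalently, by comparing an exact datum $g$ with the perturbed datum $g+g_{n}$) the solution operator is therefore unbounded, so \eqref{eq:fourier2}, and hence \eqref{eq:helmholtz}, is ill-posed on $A_{1}$.

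I do not anticipate a genuine difficulty here: the entire argument is the conversion of the unbounded multiplier into a concrete sequence, and the only care needed is the elementary verification that the exponential growth on the shell $D_{n}$ defeats the $n^{-2}$ decay imposed by the normalisation of the data. This is exactly the mechanism that is absent in the cases $A_{2}\cup A_{3}$ treated in the previous lemma, where the symbol $\cos$ and the polynomial kernel remain bounded; contrasting the two regimes is what makes the dichotomy between well-posedness and ill-posedness precise.
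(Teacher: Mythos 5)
Your proof is correct, and it reaches the lemma by a genuinely leaner route than the paper's. The underlying mechanism is the same in both arguments --- data concentrated at frequencies $\left|\rho\right|\approx n$, normalized so that they vanish in $L^{2}$, against which the multiplier $\cosh\left(\left(d-z\right)\sqrt{\lambda_{\rho,k}}\right)\ge\frac{1}{2}e^{\left(d-z\right)\sqrt{\lambda_{\rho,k}}}$ blows up exponentially --- but you set $\hat{f}\equiv0$ and perturb only $g$, whereas the paper deliberately perturbs both data at once, taking $\widehat{f_{n}}=\frac{1}{d}\widehat{g_{n}}$ supported on a square $W_{n}$ of side $1/n$ with amplitude $\sqrt{n}$. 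Freezing $f$ is logically sufficient for ill-posedness: continuity must fail for some admissible perturbation of the data pair, and your pair $\left(g_{n},0\right)\to\left(0,0\right)$ with $\left\Vert w_{n}\right\Vert _{L^{2}\left(\Omega\right)}\to\infty$ does exactly that. It also spares you the one genuinely delicate step in the paper's version: because the forcing contribution enters \eqref{eq:uhat} through a $\sinh$ integral of the opposite sign, the paper must rule out cancellation between the two terms, which it does via the elementary lower bound $\int\left|E+F\right|^{2}d\rho\ge\frac{1}{2}\int\left|E\right|^{2}d\rho-\int\left|F\right|^{2}d\rho$ combined with the damping factor $1/\sqrt{\lambda_{\rho,k}}$, which on $W_{n}$ suppresses the $f$-term by $1/\left(n^{2}+1\right)$ relative to the $g$-term. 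What the paper's heavier construction buys is a statement more in keeping with its theme: the instability persists even when the non-homogeneous term is perturbed consistently with $g$, so it is not an artifact of your choice $f\equiv0$; your construction, conversely, isolates cleanly that the boundary datum alone already destroys stability. Two further cosmetic differences are harmless: you measure blow-up in $L^{2}\left(\Omega\right)$ by integrating over $z\in\left(0,d\right)$, while the paper measures it on the worst slice $z=0$ (both are legitimate, and yours directly contradicts the $L^{2}\left(\Omega\right)$ bound proved for $A_{2}\cup A_{3}$ in the preceding lemma), and your support condition $D_{n}\subset A_{1}$ for $n-1>k$ is simpler than the paper's square $W_{n}$. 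Your individual estimates all check out: $\left\Vert g_{n}\right\Vert _{L^{2}\left(\mathbb{R}^{2}\right)}^{2}=\pi/n^{2}$, the bound $\cosh^{2}\left(\left(d-z\right)\sqrt{\lambda_{\rho,k}}\right)\ge\frac{1}{4}e^{2\left(d-z\right)\beta_{n}}$ with $\beta_{n}=\sqrt{\left(n-1\right)^{2}-k^{2}}$, and $\int_{0}^{d}e^{2\left(d-z\right)\beta_{n}}dz=\frac{e^{2d\beta_{n}}-1}{2\beta_{n}}$, whence $\left\Vert w_{n}\right\Vert _{L^{2}\left(\Omega\right)}^{2}$ grows like $e^{2d\beta_{n}}/\left(n^{2}\beta_{n}\right)\to\infty$, as claimed.
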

\begin{proof}
To show the instability of $u$ in this case, the idea is that we
construct two functions $g_{n}(x,y)$ and $f_{n}(x,y,z)$ for $n\in\mathbb{N}$
defined by the Fourier transform, as follows:
\begin{equation}
\widehat{g_n}(\rho):=\widehat{g_n}(\rho_{1},\rho_{2})=\left\{ \begin{array}{llll}
\sqrt{n}, & \mbox{if}\;~\rho\in W_{n},\\
0, & \mbox{if}\;~\rho\in\mathbb{R}^{2}\backslash W_{n},
\end{array}\right.\label{eq:ghat1}
\end{equation}
\begin{equation}
\widehat{f_n}(\rho,s):=\frac{1}{d}\widehat{g_n}(\rho)~~\text{for all}~~s\in[0,d],\label{eq:hhat1}
\end{equation}
where  $W_{n}\subset\mathbb{R}^{2}$ denoted  by
\[
W_{n}:=\Bigg\{\rho=(\rho_{1},\rho_{2})\in\mathbb{R}^{2}|~~n+k+1<\rho_{1},\rho_{2}<n+k+1+\frac{1}{n}\Bigg\}.
\]
By Parseval's identity, it follows from \eqref{eq:ghat1} and \eqref{eq:hhat1}
that
\begin{equation}
\|g_{n}\|_{L^{2}(\mathbb{R}^{2})}^{2}=\int_{\mathbb{R}^{2}}\Big|\widehat{g_n}(\rho)\Big|^{2}d\rho=\int_{n+k+1}^{n+k+1+\frac{1}{n}}\int_{n+k+1}^{n+k+1+\frac{1}{n}}\Big|\sqrt{n}\Big|^{2}d\rho_{1}d\rho_{2}=\frac{1}{n},\label{eq:2.32-1}
\end{equation}
\begin{equation}
\|f_{n}(\cdot,s)\|_{L^{2}(\mathbb{R}^{2})}^{2}=\frac{1}{d}\|g_{n}\|_{L^{2}(\mathbb{R}^{2})}^{2}=\frac{1}{nd}.\label{eq:2.33-1}
\end{equation}
Furthermore, the representation of $\hat{u}$ in \eqref{eq:uhat} reveals
\[
\widehat{u_{n}}\left(\rho,0\right)=\widehat{g_{n}}\left(\rho\right)\cosh\left(d\sqrt{\lambda_{\rho,k}}\right)+\frac{1}{\sqrt{\lambda_{\rho,k}}}\int_{0}^{d}\widehat{f_{n}}\left(\rho,s\right)\sinh\left(-s\sqrt{\lambda_{\rho,k}}\right)ds.
\]
Using the following  simple inequalities,
\[
\sinh\left(-s\sqrt{\lambda_{\rho,k}}\right)\le\frac{e^{s\sqrt{\lambda_{\rho,k}}}}{2}\le\frac{e^{d\sqrt{\lambda_{\rho,k}}}}{2}\le\cosh\left(d\sqrt{\lambda_{\rho,k}}\right),
\]
we thus have
\[
\left|\frac{1}{\sqrt{\lambda_{\rho,k}}}\int_{0}^{d}\widehat{f_{n}}\left(\rho,s\right)\sinh\left(-s\sqrt{\lambda_{\rho,k}}\right)ds\right|\le\left|\frac{1}{\sqrt{\lambda_{\rho,k}}}\int_{0}^{d}\frac{1}{d}\widehat{g_n}(\rho)\cosh\left(d\sqrt{\lambda_{\rho,k}}\right)ds\right|=\frac{\left|\widehat{g_{n}}\left(\rho\right)\cosh\left(d\sqrt{\lambda_{\rho,k}}\right)\right|}{\sqrt{\lambda_{\rho,k}}}.
\]
Therefore, for $\rho\in A_{1}$, it yields that
\[
\int_{\mathbb{R}^{2}}\left|\frac{1}{\sqrt{\lambda_{\rho,k}}}\int_{0}^{d}\widehat{f_{n}}\left(\rho,s\right)\sinh\left(-s\sqrt{\lambda_{\rho,k}}\right)ds\right|^{2}d\rho\le\int_{W_{n}}\frac{\left|\widehat{g_{n}}\left(\rho\right)\cosh\left(d\sqrt{\lambda_{\rho,k}}\right)\right|^{2}}{\left|\rho\right|^{2}-k^{2}}d\rho\le\int_{W_{n}}\frac{\left|\widehat{g_{n}}\left(\rho\right)\cosh\left(d\sqrt{\lambda_{\rho,k}}\right)\right|^{2}}{n^{2}+1}d\rho.
\]
Combining this with the following elementary inequality:
\[
\int_{\mathbb{R}^{2}}|E(\rho)+F(\rho)|^{2}d\rho\ge\frac{1}{2}\int_{\mathbb{R}^{2}}|E(\rho)|^{2}d\rho-\int_{\mathbb{R}^{2}}|F(\rho)|^{2}d\rho,
\]
and using Parseval's identity, we thus obtain
\begin{eqnarray}
\|u_{n}(\cdot,\cdot,0)\|_{L^{2}(\mathbb{R}^{2})}^{2} & = & \int_{\mathbb{R}^{2}}\left|\widehat{u_{n}}\left(\rho,0\right)\right|^{2}d\rho\nonumber \\
 & \ge & \frac{1}{2}\int_{\mathbb{R}^{2}}\left|\widehat{g_{n}}\left(\rho\right)\cosh\left(d\sqrt{\lambda_{\rho,k}}\right)\right|^{2}d\rho-\int_{\mathbb{R}^{2}}\left|\frac{1}{\sqrt{\lambda_{\rho,k}}}\int_{0}^{d}\widehat{f_{n}}\left(\rho,s\right)\sinh\left(-s\sqrt{\lambda_{\rho,k}}\right)ds\right|^{2}d\rho\nonumber \\
 & \ge & \frac{1}{2}\int_{W_{n}}\left|\widehat{g_{n}}\left(\rho\right)\cosh\left(d\sqrt{\lambda_{\rho,k}}\right)\right|^{2}d\rho-\int_{W_{n}}\frac{\left|\widehat{g_{n}}\left(\rho\right)\cosh\left(d\sqrt{\lambda_{\rho,k}}\right)\right|^{2}}{n^{2}+1}d\rho\nonumber \\
 & = & \frac{n^{2}-1}{2(n^{2}+1)}\int_{W_{n}}\left|\widehat{g_{n}}\left(\rho\right)\cosh\left(d\sqrt{\lambda_{\rho,k}}\right)\right|^{2}d\rho.\label{eq:2.32}
\end{eqnarray}
Moreover, the following estimate:
\[
\cosh^{2}\left(d\sqrt{\lambda_{\rho,k}}\right)>\frac{1}{4}e^{2d\sqrt{\lambda_{\rho,k}}}>\frac{e^{2\left|\rho\right|d}}{4e^{2kd}}
\]
 implies that
\begin{eqnarray}
\int_{W_{n}}\left|\widehat{g_{n}}\left(\rho\right)\cosh\left(d\sqrt{\lambda_{\rho,k}}\right)\right|^{2}d\rho & = & \int_{n+k+1}^{n+k+1+\frac{1}{n}}\int_{n+k+1}^{n+k+1+\frac{1}{n}}\Big|\sqrt{n}\Big|^{2}\cosh^{2}\left(d\sqrt{\lambda_{\rho,k}}\right)d\rho_{1}d\rho_{2}\nonumber \\
 & > & \frac{2n}{4e^{2kd}}\int_{n+k+1}^{n+k+1+\frac{1}{n}}\int_{n+k+1}^{n+k+1+\frac{1}{n}}e^{2\left|\rho\right|d}d\rho_{1}d\rho_{2}\nonumber \\
 & > & \frac{2n}{4e^{2kd}}e^{2nd}\int_{n+k+1}^{n+k+1+\frac{1}{n}}\int_{n+k+1}^{n+k+1+\frac{1}{n}}d\rho_{1}d\rho_{2}=\frac{e^{2dn}}{4ne^{2kd}}.\label{eq:2.33}
\end{eqnarray}
Combining \eqref{eq:2.32} and \eqref{eq:2.33}, we get 
\begin{equation}
\|u_{n}(\cdot,\cdot,0)\|_{L^{2}(\mathbb{R}^{2})}^{2}>\frac{n^{2}-1}{2(n^{2}+1)}\frac{e^{2dn}}{4ne^{2kd}}.\label{eq:2.36}
\end{equation}
Notice that \eqref{eq:2.32-1} and \eqref{eq:2.33-1} mean
\[
\lim_{n\to\infty}\|g_{n}\|_{L^{2}(\mathbb{R}^{2})}^{2}=\lim_{n\to\infty}\|f_{n}(\cdot,s)\|_{L^{2}(\mathbb{R}^{2})}^{2}=0,
\]
whereas it follows from \eqref{eq:2.36} that
\[
\lim_{n\to\infty}\|u_{n}(\cdot,\cdot,0)\|_{L^{2}(\mathbb{R}^{2})}^{2}=\infty.
\]
Thus, the problem \eqref{eq:fourier2} is, in general,
ill-posed in the Hadamard sense in $L^{2}$-norm. 
\end{proof}
We notably mention that the ill-posedness of the Helmholtz equation with Cauchy data has been
discussed in several manuscripts, for example \cite{QWS09,RR06,Xiong10}. However, to the best of our knowledge,
most of the works until now did not give
any theoretical result that would prove such ill-posedness like Lemma \ref{lem:6}.
In particular, this is the first time the ill-posedness of the problem considered in this paper is proved. Moreover, our results seem to significantly extend dozens of papers by considering the forcing term $f$. Besides, the appearance of the measured forcing term $f_{\delta}$ makes the regularization procedure rather difficult and requires highly sophisticated techniques later on.

\section{The truncation method}\label{Sec:truncate}

The problem \eqref{eq:helmholtz} has been
proved to be ill-posed when $A_1$ happens. In this section, we shall use the truncation method to stabilize this
problem by constructing a regularized solution. Motivated by the
results discussed in Section \ref{Sec:setting}, we replace the measured data by their truncated functions. These truncated functions are
solely different from zero in a bounded set controlled and parameterized by the
so-called regularization parameter $\varepsilon>0$ depending on the noise
level $\delta$. More precisely, for any fixed $z$, we define by
\[
\begin{bmatrix}\hat{f}_{\delta}^{\varepsilon}\left(\rho,z\right) & \hat{g}_{\delta}^{\varepsilon}\left(\rho\right)\end{bmatrix}:=\begin{cases}
\begin{bmatrix}\hat{f}_{\delta}\left(\rho,z\right) & \hat{g}_{\delta}\left(\rho\right)\end{bmatrix}, & \rho\in\Theta_{\varepsilon},\\
[0\quad 0], & \rho\notin\Theta_{\varepsilon},
\end{cases}
\]
where the bounded set $\Theta_{\varepsilon}$ is defined by
\[
\Theta_{\varepsilon}:=\left\{ \rho\in\mathbb{R}^{2}:\left|\rho\right|^{2}\le\frac{1}{\varepsilon}\right\},
\]
and the functions $\hat{f}_{\delta}\left(\rho,z\right)$ and $\hat{g}_{\delta}\left(\rho\right)$ are the Fourier transforms of the measured data $f_{\delta}$ and $g_{\delta}$ respectively:
\[
\hat{f}_{\delta}\left(\rho,z\right)=\int_{\mathbb{R}^{2}}f_{\delta}\left(\xi,z\right)e^{-2\pi i\left\langle \rho,\xi\right\rangle }d\xi,\quad\hat{g}_{\delta}\left(\rho\right)=\int_{\mathbb{R}^{2}}g_{\delta}\left(\xi\right)e^{-2\pi i\left\langle \rho,\xi\right\rangle }d\xi.
\]
Here we recall that $\rho=\left(\rho_{1},\rho_{2}\right)\in\mathbb{R}^{2}$, $\xi=(x,y)\in\mathbb{R}^2$ and
$\left\langle \rho,\xi\right\rangle =\rho_{1}x+\rho_{2}y$. 

In the same manner, the truncated functions of the exact data are defined by
\[\begin{bmatrix}\hat{f}^{\varepsilon}\left(\rho,z\right) & \hat{g}^{\varepsilon}\left(\rho\right)\end{bmatrix}:=\begin{cases}
\begin{bmatrix}\hat{f}\left(\rho,z\right) & \hat{g}\left(\rho\right)\end{bmatrix}, & \rho\in\Theta_{\varepsilon},\\
[0\quad 0], & \rho\notin\Theta_{\varepsilon},
\end{cases}
\]

Now we are in a position to introduce the regularized solution $\hat{u}_{\delta}^{\varepsilon}$ to \eqref{eq:fourier2} corresponding to the noise level $\delta$ and the regularization parameter $\varepsilon$. Notice that it is sufficient to restrict to the case $\rho\in A_1$. The regularized solution $\hat{u}_{\delta}^{\varepsilon}$ of the problem \eqref{eq:fourier2} along with the
boundary condition $\hat{u}_{\delta}^{\varepsilon}\left(\rho,d\right)=\hat{g}_{\delta}^{\varepsilon}\left(\rho\right)$
and  the forcing
function $-\hat{f}_{\delta}^{\varepsilon}\left(\rho,z\right)$ reads:
\begin{equation}
\hat{u}_{\delta}^{\varepsilon}\left(\rho,z\right):=\hat{g}_{\delta}^{\varepsilon}\left(\rho\right)\cosh\left(\left(d-z\right)\sqrt{\lambda_{\rho,k}}\right)+\frac{1}{\sqrt{\lambda_{\rho,k}}}\int_{z}^{d}\hat{f}_{\delta}^{\varepsilon}\left(\rho,s\right)\sinh\left(\left(z-s\right)\sqrt{\lambda_{\rho,k}}\right)ds.\label{eq:uhatdelta}
\end{equation}
The inverse Fourier transform $u_{\delta}^{\varepsilon}\left(\xi,z\right)$ of $\hat{u}_{\delta}^{\varepsilon}\left(\rho,z\right)$ is
\begin{equation}
u_{\delta}^{\varepsilon}\left(\xi,z\right)=\int_{\mathbb{R}^{2}}\left(\hat{g}_{\delta}^{\varepsilon}\left(\rho\right)\cosh\left(\left(d-z\right)\sqrt{\lambda_{\rho,k}}\right)+\frac{1}{\sqrt{\lambda_{\rho,k}}}\int_{z}^{d}\hat{f}_{\delta}^{\varepsilon}\left(\rho,s\right)\sinh\left(\left(z-s\right)\sqrt{\lambda_{\rho,k}}\right)ds\right)e^{2\pi i\left\langle \xi,\rho\right\rangle }d\rho.\label{eq:udelta}
\end{equation}
This function shall  be considered as a regularized solution to
the problem \eqref{eq:helmholtz} for measured data $f_{\delta}$ and $g_{\delta}$ and $h\equiv0$, where the regularization parameter $\varepsilon$
depends on the noise level
$\delta$ and shall be explicitly computed in our main theorem. The
regularized solution $\ensuremath{u^{\varepsilon}\left(\xi,z\right)}$ for the exact data can be defined in the same manner as \eqref{eq:udelta}.

The following lemma proves that under the assumption \textbf{(A3)}, the regularized solution for the exact data is expected to approach the exact solution in $L^{2}$-norm.
\begin{lem}
\label{lem:7}Let $u$ be the unique solution of the problem \eqref{eq:helmholtz}
with the exact data $f$ and $g$ and $h\equiv0$ and let $u^{\varepsilon}$ be the regularized solution for the exact data which is defined in the same manner as \eqref{eq:udelta}. Assume that
\[\int_{\mathbb{R}^2}\left|\hat{g}(\rho)\cosh\left(d\sqrt{\lambda_{\rho,k}}\right)\right|^2d\rho+\int_{\mathbb{R}^2}\left|\frac{1}{\sqrt{\lambda_{\rho,k}}}\int_0^d\left|\hat{f}(\rho,s)\sinh\left(s\sqrt{\lambda_{\rho,k}}\right)\right|ds\right|^2d\rho\le M_0^2,\]
for some given constant $M_0>0$. Then, for $z\in\left(0,d\right]$, we obtain
\[
\left\Vert u\left(\cdot,z\right)-u^{\varepsilon}\left(\cdot,z\right)\right\Vert _{L^{2}\left(\mathbb{R}^{2}\right)}\le M_0\left[e^{-z\sqrt{\frac{1}{\varepsilon}-k^{2}}}\left(1+e^{-2\left(d-z\right)\sqrt{\frac{1}{\varepsilon}-k^{2}}}\right)+e^{-z\sqrt{\frac{1}{\varepsilon}-k^{2}}}\right].
\]
As a consequence, it holds that
\[
\left\Vert u\left(\cdot,z\right)-u^{\varepsilon}\left(\cdot,z\right)\right\Vert _{L^{2}\left(\mathbb{R}^{2}\right)}\to0\quad\mbox{as}\quad\varepsilon\to0.
\]
\end{lem}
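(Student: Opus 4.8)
The plan is to pass to the Fourier side and exploit that $u^{\varepsilon}$ is assembled from data truncated outside $\Theta_{\varepsilon}$. By Parseval's identity,
\[
\left\Vert u(\cdot,z)-u^{\varepsilon}(\cdot,z)\right\Vert_{L^{2}(\mathbb{R}^{2})}^{2}=\int_{\mathbb{R}^{2}}\bigl|\hat{u}(\rho,z)-\hat{u}^{\varepsilon}(\rho,z)\bigr|^{2}d\rho.
\]
For $\varepsilon<1/k^{2}$ one has $A_{2}\cup A_{3}=\{|\rho|^{2}\le k^{2}\}\subseteq\Theta_{\varepsilon}$, so the truncation leaves the data untouched there and $\hat{u}^{\varepsilon}=\hat{u}$ on $A_{2}\cup A_{3}$; the only discrepancy comes from $A_{1}$, where $\hat{g}^{\varepsilon}=\hat{g}$ and $\hat{f}^{\varepsilon}=\hat{f}$ on $\Theta_{\varepsilon}$ while both vanish off $\Theta_{\varepsilon}$. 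Hence, using the $A_{1}$ branch of \eqref{eq:uhat}, the difference is supported on $\{|\rho|^{2}>1/\varepsilon\}\subset A_{1}$ and there equals $\hat{u}$, with $\sqrt{\lambda_{\rho,k}}=\sqrt{|\rho|^{2}-k^{2}}>\sqrt{1/\varepsilon-k^{2}}$. The whole task thus reduces to estimating $\hat{u}(\rho,z)$ on $\{|\rho|^{2}>1/\varepsilon\}$ and extracting the factor $e^{-z\sqrt{1/\varepsilon-k^{2}}}$.

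Next I would split $\hat{u}(\rho,z)$ into its boundary part $\hat{g}(\rho)\cosh((d-z)\sqrt{\lambda_{\rho,k}})$ and its forcing part $\tfrac{1}{\sqrt{\lambda_{\rho,k}}}\int_{z}^{d}\hat{f}(\rho,s)\sinh((z-s)\sqrt{\lambda_{\rho,k}})\,ds$, and compare each against the $z=0$ quantity controlled by $M_{0}$. Writing $\mu=\sqrt{\lambda_{\rho,k}}$, the two elementary inequalities I would use are
\[
\frac{\cosh((d-z)\mu)}{\cosh(d\mu)}\le e^{-z\mu}\bigl(1+e^{-2(d-z)\mu}\bigr),\qquad \sinh((s-z)\mu)\le e^{-z\mu}\sinh(s\mu)\quad(0\le z\le s\le d),
\]
each obtained by dividing numerator and denominator by $e^{d\mu}$, respectively $e^{s\mu}$ (the second being justified in more detail below). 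The first gives $|\hat{g}(\rho)\cosh((d-z)\mu)|\le e^{-z\mu}(1+e^{-2(d-z)\mu})\,|\hat{g}(\rho)\cosh(d\mu)|$. For the forcing part, taking absolute values inside and using $|\sinh((z-s)\mu)|=\sinh((s-z)\mu)$, the second inequality followed by enlarging the integral from $[z,d]$ to $[0,d]$ (valid since the integrand is nonnegative) yields the pointwise bound $e^{-z\mu}\cdot\tfrac{1}{\mu}\int_{0}^{d}|\hat{f}(\rho,s)\sinh(s\mu)|\,ds$.

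It then remains to freeze the $\rho$-dependent prefactors at the threshold. Since $z>0$ and $2d-z>0$ for $z\in(0,d]$, the function $\mu\mapsto e^{-z\mu}+e^{-(2d-z)\mu}$ is decreasing, so on $\{|\rho|^{2}>1/\varepsilon\}$ it is bounded by its value at $\mu=\sqrt{1/\varepsilon-k^{2}}$; likewise $e^{-z\mu}\le e^{-z\sqrt{1/\varepsilon-k^{2}}}$. Pulling these constants out of the respective integrals, bounding the remaining integral over $\{|\rho|^{2}>1/\varepsilon\}$ by the full integral over $\mathbb{R}^{2}$ (each of these two integrals being $\le M_{0}^{2}$, since the hypothesis bounds their sum by $M_{0}^{2}$ and both are nonnegative), and finally recombining the two pieces by Minkowski's inequality produces exactly the stated estimate. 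Letting $\varepsilon\to0$ makes $\sqrt{1/\varepsilon-k^{2}}\to\infty$, so $e^{-z\sqrt{1/\varepsilon-k^{2}}}\to0$ for every fixed $z>0$ while the factor $1+e^{-2(d-z)\sqrt{1/\varepsilon-k^{2}}}$ stays bounded by $2$, giving the claimed convergence.

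I expect the only genuinely delicate point to be the inequality $\sinh((s-z)\mu)\le e^{-z\mu}\sinh(s\mu)$: after dividing by $e^{s\mu}$ one must check that the correction factor $(1-e^{-2(s-z)\mu})/(1-e^{-2s\mu})$ does not exceed $1$, which holds because $s-z\le s$. This is what guarantees that the boundary integral $\int_{0}^{d}|\hat{f}(\rho,s)\sinh(s\mu)|\,ds$ appearing in the $M_{0}$-bound, rather than some larger $z$-dependent integral, is the correct quantity to compare against, and it is the reason the hypothesis is phrased at $z=0$.
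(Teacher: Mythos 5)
Your proposal is correct and follows essentially the same route as the paper: both reduce via Parseval to the tail region $\{|\rho|^{2}>1/\varepsilon\}\subset A_{1}$ where $\hat{u}^{\varepsilon}$ vanishes, split $\hat{u}$ into the boundary and forcing parts, and use the same two ratio bounds $\cosh((d-z)\mu)/\cosh(d\mu)\le e^{-z\mu}\bigl(1+e^{-2(d-z)\mu}\bigr)$ and $\sinh((s-z)\mu)/\sinh(s\mu)\le e^{-z\mu}$ together with monotonicity in $\mu$ and the a priori bound $M_{0}$. The only difference is bookkeeping: you assemble the estimate directly with pointwise bounds and Minkowski's inequality, whereas the paper retains a factor $|\hat{u}-\hat{u}^{\varepsilon}|$ via Cauchy--Schwarz to get $\|u-u^{\varepsilon}\|^{2}\le M_{0}[\cdots]\,\|u-u^{\varepsilon}\|$ and then cancels one power --- your version is slightly cleaner and yields the identical constant.
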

\begin{proof}
Since $\hat{u}^\varepsilon$ agrees with $\hat{u}$ when $|\rho|^2\le\frac{1}{\varepsilon}$ and $\hat{u}^\varepsilon=0$ if $|\rho|^2>\frac{1}{\varepsilon}$, we have that
\begin{eqnarray}
\int_{\mathbb{R}^2}\left|\hat{u}\left(\rho,z\right)-\hat{u}^{\varepsilon}\left(\rho,z\right)\right|^{2}d\rho & = & \int_{|\rho|^2>\frac{1}{\varepsilon}}\left|\hat{u}\left(\rho,z\right)-\hat{u}_{\varepsilon}\left(\rho,z\right)\right|\left|\hat{u}\left(\rho,z\right)\right|d\rho\nonumber \\
 & = & \int_{|\rho|^2>\frac{1}{\varepsilon}}\left|\hat{u}\left(\rho,z\right)-\hat{u}_{\varepsilon}\left(\rho,z\right)\right|\left|\hat{g}\left(\rho\right)\cosh\left(\left(d-z\right)\sqrt{\lambda_{\rho,k}}\right)\right.\nonumber \\
 &  & \left.+\frac{1}{\sqrt{\lambda_{\rho,k}}}\int_{z}^{d}\hat{f}\left(\rho,s\right)\sinh\left(\left(z-s\right)\sqrt{\lambda_{\rho,k}}\right)ds\right|d\rho.\label{eq:3.3-1}
\end{eqnarray}

When $A_1$ happens, $|\rho|^2>k^2$ implies $\sqrt{\lambda_{\rho,k}} >0$ and hence, $\cosh(d\sqrt{\lambda_{\rho,k}})\ge 1$. Thanks to this, we can write
\begin{eqnarray*}
\mathcal{I}&:=&\int_{|\rho|^2>\frac{1}{\varepsilon}}\left|\hat{u}\left(\rho,z\right)-\hat{u}^{\varepsilon}\left(\rho,z\right)\right|\left|\hat{g}\left(\rho\right)\cosh\left(\left(d-z\right)\sqrt{\lambda_{\rho,k}}\right)\right|d\rho\\
&=&\int_{|\rho|^2>\frac{1}{\varepsilon}}\left|\hat{u}\left(\rho,z\right)-\hat{u}^{\varepsilon}\left(\rho,z\right)\right|\left|\hat{g}\left(\rho\right)\cosh\left(d\sqrt{\lambda_{\rho,k}}\right)\frac{\cosh\left(\left(d-z\right)\sqrt{\lambda_{\rho,k}}\right)}{\cosh\left(d\sqrt{\lambda_{\rho,k}}\right)}\right|d\rho,
\end{eqnarray*}
Using the H\"{o}lder's inequality, the integral $\mathcal{I}$ can
be bounded from above by
\begin{equation}
\mathcal{I}\le\mathcal{I}_{\varepsilon}\left(z\right)\left\Vert \hat{g}\left(\cdot\right)\cosh(d\sqrt{\lambda_{\rho,k}})\right\Vert _{L^{2}\left(\mathbb{R}^{2}\right)}\left\Vert \hat{u}\left(\cdot,z\right)-\hat{u}^{\varepsilon}\left(\cdot,z\right)\right\Vert _{L^{2}\left(\mathbb{R}^{2}\right)},\label{eq:3.3}
\end{equation}
where the function $\mathcal{I}_{\varepsilon}\left(z\right)$ is defined
and estimated as follows
\begin{eqnarray}
\mathcal{I}_{\varepsilon}\left(z\right) & := & \sup_{|\rho|^2>\frac{1}{\varepsilon}}\left|\frac{\cosh\left(\left(d-z\right)\sqrt{\lambda_{\rho,k}}\right)}{\cosh\left(d\sqrt{\lambda_{\rho,k}}\right)}\right|= \sup_{|\rho|^2>\frac{1}{\varepsilon}}\frac{e^{\left(d-z\right)\sqrt{\lambda_{\rho,k}}}+e^{-\left(d-z\right)\sqrt{\lambda_{\rho,k}}}}{e^{d\sqrt{\lambda_{\rho,k}}}+e^{-d\sqrt{\lambda_{\rho,k}}}}\nonumber \\
 & = & \sup_{|\rho|^2>\frac{1}{\varepsilon}}e^{-z\sqrt{\lambda_{\rho,k}}}\left(\frac{1+e^{-2\left(d-z\right)\sqrt{\lambda_{\rho,k}}}}{1+e^{-2d\sqrt{\lambda_{\rho,k}}}}\right)\le  \sup_{|\rho|^2>\frac{1}{\varepsilon}}e^{-z\sqrt{\lambda_{\rho,k}}}\left(1+e^{-2\left(d-z\right)\sqrt{\lambda_{\rho,k}}}\right).\label{eq:3.4}
\end{eqnarray}
Due to the fact that the function $s\mapsto e^{-z\sqrt{s-k^{2}}}\left(1+e^{-2\left(d-z\right)\sqrt{s-k^{2}}}\right)$
is decreasing, the estimates
\eqref{eq:3.3} and \eqref{eq:3.4} imply
\begin{equation}
\mathcal{I}\le M_0e^{-z\sqrt{\frac{1}{\varepsilon}-k^{2}}}\left(1+e^{-2\left(d-z\right)\sqrt{\frac{1}{\varepsilon}-k^{2}}}\right)\left\Vert \hat{u}\left(\cdot,z\right)-\hat{u}^{\varepsilon}\left(\cdot,z\right)\right\Vert _{L^{2}\left(\mathbb{R}^{2}\right)}.\label{eq:I}
\end{equation}

In the same vein, we have $\ensuremath{\sinh\left(s\sqrt{\lambda_{\rho,k}}\right)\ne0}>0$ for all $s\in (0,d)$. It holds that
\begin{eqnarray}
\mathcal{J} &:=&\int_{|\rho|^2>\frac{1}{\varepsilon}}\left|\hat{u}\left(\rho,z\right)-\hat{u}^{\varepsilon}\left(\rho,z\right)\right|\left|\frac{1}{\sqrt{\lambda_{\rho,k}}}\int_{z}^{d}\hat{f}\left(\rho,s\right)\sinh\left(\left(s-z\right)\sqrt{\lambda_{\rho,k}}\right)ds\right|d\rho\nonumber\\
& \le & \int_{|\rho|^2>\frac{1}{\varepsilon}}\left|\hat{u}\left(\rho,z\right)-\hat{u}^{\varepsilon}\left(\rho,z\right)\right|\frac{1}{\sqrt{\lambda_{\rho,k}}}\int_{z}^{d}\left|\hat{f}\left(\rho,s\right)\sinh\left(s\sqrt{\lambda_{\rho,k}}\right)\right|\left|\frac{\sinh\left(\left(s-z\right)\sqrt{\lambda_{\rho,k}}\right)}{\sinh\left(s\sqrt{\lambda_{\rho,k}}\right)}\right|dsd\rho\nonumber \\
 & \le & M_0\mathcal{J}_{\varepsilon}\left(s\right)\left\Vert \hat{u}\left(\cdot,z\right)-\hat{u}^{\varepsilon}\left(\cdot,z\right)\right\Vert _{L^{2}\left(\mathbb{R}^{2}\right)},\label{eq:3.7}
\end{eqnarray}
where $\mathcal{J}_{\varepsilon}\left(s\right),z\le s\le d$, is
defined and estimated as follows:
\begin{eqnarray}
\mathcal{J}_{\varepsilon}\left(s\right) & := & \sup_{|\rho|^2>\frac{1}{\varepsilon}}\left|\frac{\sinh\left(\left(s-z\right)\sqrt{\lambda_{\rho,k}}\right)}{\sinh\left(s\sqrt{\lambda_{\rho,k}}\right)}\right| =  \sup_{|\rho|^2>\frac{1}{\varepsilon}}\left|\frac{e^{\left(s-z\right)\sqrt{\lambda_{\rho,k}}}-e^{-\left(s-z\right)\sqrt{\lambda_{\rho,k}}}}{e^{s\sqrt{\lambda_{\rho,k}}}-e^{-s\sqrt{\lambda_{\rho,k}}}}\right|\nonumber \\
 & = & \sup_{|\rho|^2>\frac{1}{\varepsilon}}e^{-z\sqrt{\lambda_{\rho,k}}}\left|\frac{1-e^{-2\left(s-z\right)\sqrt{\lambda_{\rho,k}}}}{1-e^{-2s\sqrt{\lambda_{\rho,k}}}}\right| \le  e^{-z\sqrt{\frac{1}{\varepsilon}-k^{2}}}.\label{eq:J}
\end{eqnarray}

Combining \eqref{eq:I}, \eqref{eq:3.7} and \eqref{eq:J} together
with \eqref{eq:3.3-1}, and employing the Parseval's identity, we obtain
\[
\left\Vert u\left(\cdot,z\right)-u^{\varepsilon}\left(\cdot,z\right)\right\Vert _{L^{2}\left(\mathbb{R}^{2}\right)}^{2}\le M_0\left[e^{-z\sqrt{\frac{1}{\varepsilon}-k^{2}}}\left(1+e^{-2\left(d-z\right)\sqrt{\frac{1}{\varepsilon}-k^{2}}}\right)+e^{-z\sqrt{\frac{1}{\varepsilon}-k^{2}}}\right]\left\Vert u\left(\cdot,z\right)-u^{\varepsilon}\left(\cdot,z\right)\right\Vert _{L^{2}\left(\mathbb{R}^{2}\right)}.
\]
This completes the proof of the lemma.\end{proof}

The regularized solution for the exact data $u^{\varepsilon}$, on the other hand, can be approximated by the regularized solution for the measured data $u^{\varepsilon}_\delta$. This result is proved in the following lemma.
\begin{lem}
\label{lem:9}Let $u^{\varepsilon}$ and $u_{\delta}^{\varepsilon}$
be two regularized solutions defined in \eqref{eq:udelta} with respect to the data $\ensuremath{\left(f,g\right)}$ and $\ensuremath{\left(f_{\delta},g_{\delta}\right)}$
respectively, where the measured data $f_{\delta}$ and $g_{\delta}$ satisfy \textbf{(A1)}.
Then for $z\in\left[0,d\right]$, there exists a function $M_{1}\left(z\right)>0$
depending only on $k,\varepsilon$ and $d$ such that
\[
\left\Vert u^{\varepsilon}\left(\cdot,z\right)-u_{\delta}^{\varepsilon}\left(\cdot,z\right)\right\Vert _{L^{2}\left(\mathbb{R}^{2}\right)}\le M_{1}\left(z\right)\delta.
\]
\end{lem}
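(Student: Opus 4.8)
The plan is to exploit the fact that both regularized solutions are built from the \emph{same} representation formula \eqref{eq:uhatdelta} applied to truncated data, so that their difference is controlled entirely by the data discrepancies through $A_1$-kernels that the truncation has rendered bounded. Since $\hat{u}^{\varepsilon}-\hat{u}_{\delta}^{\varepsilon}$ is obtained from \eqref{eq:uhatdelta} by replacing $\hat{g}_{\delta}^{\varepsilon}$ with $\hat{g}^{\varepsilon}-\hat{g}_{\delta}^{\varepsilon}$ and $\hat{f}_{\delta}^{\varepsilon}$ with $\hat{f}^{\varepsilon}-\hat{f}_{\delta}^{\varepsilon}$, I would first invoke Parseval's identity and then use the triangle inequality in $L^{2}(\mathbb{R}^{2})$ to split the error into a \emph{$g$-part} and an \emph{$f$-part}. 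The decisive structural point is that both truncated differences are supported in $\Theta_{\varepsilon}=\{|\rho|^{2}\le 1/\varepsilon\}$, on which $\lambda_{\rho,k}=|\rho|^{2}-k^{2}\le 1/\varepsilon-k^{2}$; thus the hyperbolic factors that drove the ill-posedness in Lemma \ref{lem:6} are now uniformly bounded. Throughout I take $\varepsilon$ small enough that $1/\varepsilon>k^{2}$, consistent with the regularization regime $\varepsilon\to0$.

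For the $g$-part, monotonicity of $\cosh$ gives $\cosh\big((d-z)\sqrt{\lambda_{\rho,k}}\big)\le\cosh\big((d-z)\sqrt{1/\varepsilon-k^{2}}\big)$ on $\Theta_{\varepsilon}$. Pulling this constant out of the integral, applying Parseval's identity, and using assumption \textbf{(A1)} that $\|g-g_{\delta}\|_{L^{2}(\mathbb{R}^{2})}\le\delta$, I obtain a bound of the form $\cosh\big((d-z)\sqrt{1/\varepsilon-k^{2}}\big)\,\delta$ for this contribution.

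The $f$-part is more delicate and is where I expect the main obstacle. One must estimate the kernel $\big|\sinh((z-s)\sqrt{\lambda_{\rho,k}})\big|/\sqrt{\lambda_{\rho,k}}$ for $z\le s\le d$. It is tempting but incorrect to bound $1/\sqrt{\lambda_{\rho,k}}$ and $\sinh$ separately, since $\lambda_{\rho,k}$ may approach $0$ at the inner edge $|\rho|=k$ of $A_{1}$, producing a spurious singularity. The remedy is to treat the quotient as a single function of $t:=\sqrt{\lambda_{\rho,k}}$: the map $t\mapsto \sinh(ct)/t$ is increasing and extends continuously by the value $c$ at $t=0$, so over $\Theta_{\varepsilon}\cap A_{1}$ the whole kernel is uniformly bounded by $\sinh\big((d-z)\sqrt{1/\varepsilon-k^{2}}\big)/\sqrt{1/\varepsilon-k^{2}}$. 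With this bound in hand, I would apply the Cauchy--Schwarz inequality in the $s$-variable on $[z,d]\subset[0,d]$, then Parseval's identity together with Fubini's theorem in $\rho$, and finally \textbf{(A1)} with $\|f-f_{\delta}\|_{L^{2}(\Omega)}\le\delta$, to control the $f$-part by a constant multiple of $\sqrt{d}\,\delta$.

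Combining the two parts via the triangle inequality yields the claimed estimate with, for instance, $M_{1}(z)=\cosh\big((d-z)\sqrt{1/\varepsilon-k^{2}}\big)+\sqrt{d}\,\sinh\big((d-z)\sqrt{1/\varepsilon-k^{2}}\big)/\sqrt{1/\varepsilon-k^{2}}$, which depends only on $k,\varepsilon,d$ (and $z$). The only genuinely subtle step is the uniform control of the $f$-kernel near $|\rho|=k$; once the quotient is handled as a whole, every remaining estimate is a routine application of Cauchy--Schwarz, Parseval, and \textbf{(A1)}.
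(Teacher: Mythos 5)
Your proposal is correct and takes essentially the same route as the paper's proof: Parseval's identity, a split into the $g$-contribution and the $f$-contribution, Cauchy--Schwarz in the $s$-variable, assumption \textbf{(A1)}, and monotonicity of the hyperbolic kernels in $|\rho|^{2}$ over $\Theta_{\varepsilon}\cap A_{1}$ so that the supremum is attained at $|\rho|^{2}=1/\varepsilon$. The only difference is minor and organizational: you bound the kernel $\sinh\bigl((s-z)\sqrt{\lambda_{\rho,k}}\bigr)/\sqrt{\lambda_{\rho,k}}$ uniformly before integrating (extending it continuously through $\lambda_{\rho,k}=0$), whereas the paper keeps $\sinh^{2}$ inside the $s$-integral, evaluates $\int_{z}^{d}\sinh^{2}\bigl((s-z)\sqrt{\lambda_{\rho,k}}\bigr)ds$ in closed form, and uses that the resulting expression divided by $\lambda_{\rho,k}$ is increasing in $|\rho|^{2}$ --- which implicitly disposes of the same apparent singularity at $|\rho|=k$ that you correctly flag --- so the two arguments simply yield slightly different but equally admissible functions $M_{1}(z)$.
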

\begin{proof}
It follows from Parseval's identity, the Cauchy-Schwarz inequality and H\"{o}lder's inequality that
\begin{eqnarray}
\left\Vert u^{\varepsilon}\left(\cdot,z\right)-u_{\delta}^{\varepsilon}\left(\cdot,z\right)\right\Vert _{L^{2}\left(\mathbb{R}^{2}\right)}^{2} &=& \left\Vert \hat{u}^{\varepsilon}\left(\cdot,z\right)-\hat{u}_{\delta}^{\varepsilon}\left(\cdot,z\right)\right\Vert _{L^{2}\left(\mathbb{R}^{2}\right)}^{2}\nonumber\\
&=&\int_{|\rho|^2\le\frac{1}{\varepsilon}}\left|\left(\hat{g}-\hat{g}_\delta\right)\cosh\left((d-z)\sqrt{\lambda_{\rho,k}}\right)+\frac{1}{\sqrt{\lambda_{\rho,k}}}\int_z^d\left(\hat{f}-\hat{f}_\delta\right)\sinh\left((z-s)\sqrt{\lambda_{\rho,k}}\right)ds\right|^2d\rho\nonumber\\
& \le & 2\int_{|\rho|^2\le\frac{1}{\varepsilon}}\left[\left|\hat{g}\left(\rho\right)-\hat{g}_{\delta}\left(\rho\right)\right|^{2}\cosh^{2}\left(\left(d-z\right)\sqrt{\lambda_{\rho,k}}\right)+\right.\nonumber \\
 &  & \left.+\frac{1}{\lambda_{\rho,k}}\left(d-z\right)\int_{z}^{d}\left|\hat{f}\left(\rho,s\right)-\hat{f}_{\delta}\left(\rho,s\right)\right|^{2}\sinh^{2}\left(\left(s-z\right)\sqrt{\lambda_{\rho,k}}\right)ds\right]d\rho\nonumber \\
 & \le & 2\delta^{2}\sup_{|\rho|^2\le\frac{1}{\varepsilon}}\left[\cosh^{2}\left(\left(d-z\right)\sqrt{\lambda_{\rho,k}}\right)+\left(d-z\right)\frac{1}{\lambda_{\rho,k}}\int_{z}^{d}\sinh^{2}\left(\left(s-z\right)\sqrt{\lambda_{\rho,k}}\right)ds\right].\label{eq:3.9}
\end{eqnarray}
Since $\ensuremath{\rho\in A_{1}}$,
the function $r\mapsto \ensuremath{\cosh\left(\left(d-z\right)\sqrt{r-k^2}\right)}$
is increasing. On the other hand,
\begin{eqnarray*}
\int_{z}^{d}\sinh^{2}\left(\left(s-z\right)\sqrt{\lambda_{\rho,k}}\right)ds & = & \frac{z-d}{2}+\frac{\sinh\left(2\sqrt{\lambda_{\rho,k}}\left(d-z\right)\right)}{4\sqrt{\lambda_{\rho,k}}}\\
 & = & \frac{\sinh\left(2\sqrt{\lambda_{\rho,k}}\left(d-z\right)\right)-2\sqrt{\lambda_{\rho,k}}\left(d-z\right)}{4\sqrt{\lambda_{\rho,k}}},
\end{eqnarray*}
and the function $r \mapsto \ensuremath{{\displaystyle \frac{\sinh\left(2\sqrt{r-k^{2}}\left(d-z\right)\right)-2\sqrt{r-k^{2}}\left(d-z\right)}{4\left(\sqrt{r-k^{2}}\right)^{3}}}}$
is increasing. These imply that the supremum on the right-hand
side of \eqref{eq:3.9} is attained when $|\rho|^2=\frac{1}{\varepsilon}$. This supremum is given by
\begin{equation}\label{eq:240216}
\cosh^{2}\left(\left(d-z\right)\sqrt{\frac{1}{\varepsilon}-k^{2}}\right)+{\displaystyle \left(d-z\right)\frac{\sinh\left(2\sqrt{\frac{1}{\varepsilon}-k^{2}}\left(d-z\right)\right)-2\sqrt{\frac{1}{\varepsilon}-k^{2}}\left(d-z\right)}{4\left(\sqrt{\frac{1}{\varepsilon}-k^{2}}\right)^{3}}}.
\end{equation}
Combining \eqref{eq:3.9}-\eqref{eq:240216}, we conclude that
there exists a function $M_{1}\left(z\right)>0$ depending on $k,\varepsilon$
and $d$ such that 
\[
\left\Vert u^{\varepsilon}\left(\cdot,z\right)-u_{\delta}^{\varepsilon}\left(\cdot,z\right)\right\Vert _{L^{2}\left(\mathbb{R}^{2}\right)}\le M_{1}\left(z\right)\delta,\quad z\in\left[0,d\right].
\] 
Interestingly, this function can be chosen suitably and it is particularly formulated by
\[
M_{1}^{2}\left(z\right)=2\cosh^{2}\left(\left(d-z\right)\sqrt{\frac{1}{\varepsilon}-k^{2}}\right)+\left(d-z\right)\frac{\sinh\left(2\sqrt{\frac{1}{\varepsilon}-k^{2}}\left(d-z\right)\right)-2\sqrt{\frac{1}{\varepsilon}-k^{2}}\left(d-z\right)}{2\left(\sqrt{\frac{1}{\varepsilon}-k^{2}}\right)^{3}}.
\]
This ends the proof of the lemma.\end{proof}
\begin{rem}
The idea leading to Lemma \ref{lem:7} and Lemma \ref{lem:9} is that
one needs to estimate the error in $L^{2}$-norm between the exact
solution of \eqref{eq:helmholtz} where $h\equiv0$ and the regularized
solution proposed in \eqref{eq:udelta}. It is worth noting that
  when $\delta$ approaches zero,
the way we choose $\varepsilon$ must guarantee that $\frac{1}{\varepsilon}$ spreads to infinity
and $\frac{1}{\varepsilon}>k^{2}$ holds. Therefore, the following
theorem gives us exactly what we need.\end{rem}
\begin{thm}
\label{thm:11}Let $u$ be the exact solution of the problem \eqref{eq:helmholtz}
with $h\equiv0$. Let $u_{\delta}^{\varepsilon}$ be the regularized
solution given by \eqref{eq:udelta} associated with the measured data $f_{\delta}$
and $g_{\delta}$. We assume that the noise level $\delta<M_{0}$ where $M_0$ is the constant in Lemma \ref{lem:7}, and that the measured data $f_\delta$ and $g_\delta$ satisfy \textbf{(A1)}. If we put $\kappa_{\varepsilon}:=\sqrt{\frac{1}{\varepsilon}-k^{2}}$
and $\varepsilon:=\varepsilon\left(\delta\right)$ such that
\[
\kappa_{\varepsilon\left(\delta\right)}=-\frac{1}{d}\ln\frac{\delta}{M_{0}},
\]
then for every $z\in\left(0,d\right]$, we obtain the estimate
\[
\left\Vert u\left(\cdot,z\right)-u_{\delta}^{\varepsilon}\left(\cdot,z\right)\right\Vert _{L^{2}\left(\mathbb{R}^{2}\right)}\le\left(2\sqrt{2\delta^{2\left(1-\frac{z}{d}\right)}+M_{0}^{\frac{2\left(d-z\right)}{d}}\left[1+\frac{d^{3}\left(d-z\right)}{4\ln^{3}\left(\frac{M_{0}}{\delta}\right)}\right]}+M_{0}^{\frac{d-z}{d}}\right)\delta^{\frac{z}{d}}.
\]

As a consequence, for each $z\in\left(0,d\right]$, we have
\[
\left\Vert u\left(\cdot,z\right)-u_{\delta}^{\varepsilon}\left(\cdot,z\right)\right\Vert _{L^{2}\left(\mathbb{R}^{2}\right)}\to0 \quad\mbox{as}\quad\delta\to0.
\]
\end{thm}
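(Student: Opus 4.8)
The plan is to bound the full error by the triangle inequality, splitting it exactly along the two estimates already available:
\[
\|u(\cdot,z) - u_\delta^\varepsilon(\cdot,z)\|_{L^2(\mathbb{R}^2)} \le \|u(\cdot,z) - u^\varepsilon(\cdot,z)\|_{L^2(\mathbb{R}^2)} + \|u^\varepsilon(\cdot,z) - u_\delta^\varepsilon(\cdot,z)\|_{L^2(\mathbb{R}^2)}.
\]
The first summand is the truncation (approximation) error between the exact solution and the regularized solution built from exact data, controlled by Lemma \ref{lem:7}; the second is the data-propagation error, controlled by Lemma \ref{lem:9} as $M_{1}(z)\delta$. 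The hypothesis $\delta < M_{0}$ guarantees that the constant $M_{0}$ from Lemma \ref{lem:7} is the same one entering the parameter choice, so both bounds are usable at once. I focus on the genuinely unstable regime $\rho \in A_{1}$, since that is where the amplification lives and where the two lemmas are stated.

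The decisive step is to substitute the explicit rule $\kappa_{\varepsilon(\delta)} = \frac{1}{d}\ln\frac{M_{0}}{\delta}$, i.e. $e^{d\kappa_{\varepsilon(\delta)}} = M_{0}/\delta$, and watch the exponentials collapse into powers of $\delta$. In the Lemma \ref{lem:7} term each factor $e^{-z\kappa_\varepsilon}$ becomes $M_{0}^{-z/d}\delta^{z/d}$, so $M_{0}e^{-z\kappa_\varepsilon} = M_{0}^{(d-z)/d}\delta^{z/d}$ while the leftover exponential satisfies $e^{-2(d-z)\kappa_\varepsilon}\le 1$; this produces the isolated term $M_{0}^{(d-z)/d}\delta^{z/d}$. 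In the Lemma \ref{lem:9} term I will use the elementary inequalities $\cosh x \le e^{x}$ and $\sinh x \le \tfrac{1}{2}e^{x}$ for $x\ge 0$ inside $M_{1}^{2}(z)$, combined with the key identities $\delta^{2}e^{2(d-z)\kappa_{\varepsilon(\delta)}} = M_{0}^{2(d-z)/d}\delta^{2z/d}$ and $\kappa_{\varepsilon(\delta)}^{-3} = d^{3}/\ln^{3}(M_{0}/\delta)$, to rewrite $M_{1}(z)\delta$ as $\delta^{z/d}$ times the square root displayed in the statement. The heart of the matter is the cancellation $\delta\cdot e^{(d-z)\kappa_{\varepsilon(\delta)}} = M_{0}^{(d-z)/d}\delta^{z/d}$: the noise $\delta$ precisely neutralizes the amplification $\cosh((d-z)\kappa_\varepsilon)\sim e^{(d-z)\kappa_\varepsilon}$ generated by the high-frequency modes.

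I expect the main obstacle to be the $\sinh$ contribution to $M_{1}(z)$. One must check that after multiplying by $\delta^{2}$ and substituting $\kappa_{\varepsilon(\delta)}$, the quantity $\delta^{2}(d-z)\frac{\sinh(2\kappa_\varepsilon(d-z)) - 2\kappa_\varepsilon(d-z)}{2\kappa_\varepsilon^{3}}$ does not degrade the rate: the exponential growth of $\sinh(2(d-z)\kappa_\varepsilon)$ is absorbed by $\delta^{2}$ into $M_{0}^{2(d-z)/d}\delta^{2z/d}$, while the algebraic prefactor $\kappa_\varepsilon^{-3}$ supplies the $\ln^{-3}(M_{0}/\delta)$ decay, yielding the term $\frac{d^{3}(d-z)}{4\ln^{3}(M_{0}/\delta)}M_{0}^{2(d-z)/d}$ under the square root. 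Collecting the two contributions and factoring out the common $\delta^{z/d}$ gives the asserted estimate, the outer constant $2$ and the inner $\sqrt{2}$ absorbing the routine slack from the elementary bounds.

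For the convergence claim I will simply read off the $\delta\to 0$ limit from the closed form: the prefactor $\delta^{z/d}\to 0$ for every $z\in(0,d]$, while the bracketed quantity stays bounded because $M_{0}$ is fixed, $\delta^{2(1-z/d)}\le 1$, and $\ln^{-3}(M_{0}/\delta)\to 0$. Hence the right-hand side tends to zero; this simultaneously exhibits stability (the Hölder-type dependence $\delta^{z/d}$ on the noise) and strong $L^{2}$-convergence. The exclusion of $z=0$ is natural, since there the exponent $z/d$ vanishes and the cancellation that drives convergence is lost.
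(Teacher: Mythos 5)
Your proposal follows the paper's proof in every structural respect: the same triangle-inequality split into the truncation error (Lemma \ref{lem:7}) and the data-propagation error (Lemma \ref{lem:9}), the same substitution $e^{d\kappa_{\varepsilon(\delta)}}=M_{0}/\delta$ collapsing exponentials into powers $\delta^{z/d}$, and the same asymptotic treatment of $M_{1}(z)$ via $\kappa_{\varepsilon}^{-3}=d^{3}/\ln^{3}(M_{0}/\delta)$. Your reading of the convergence claim is also correct. There is, however, a concrete gap in the final assembly, and it is exactly the step you wave through with ``the outer constant $2$ and the inner $\sqrt{2}$ absorbing the routine slack.'' Lemma \ref{lem:7} gives the bound $M_{0}\left[e^{-z\kappa_{\varepsilon}}\left(1+e^{-2(d-z)\kappa_{\varepsilon}}\right)+e^{-z\kappa_{\varepsilon}}\right]$; under your treatment ($e^{-2(d-z)\kappa_{\varepsilon}}\le 1$) this is up to $3M_{0}^{(d-z)/d}\delta^{z/d}$, not the single ``isolated term'' you record. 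Adding your estimate of $M_{1}\delta$, which behaves like $\sqrt{2}\,M_{0}^{(d-z)/d}\delta^{z/d}$ as $\delta\to 0$ (for $z<d$, since $\cosh x\le e^{x}$ yields $2M_{0}^{2(d-z)/d}$ rather than the paper's $2\delta^{2(1-z/d)}+M_{0}^{2(d-z)/d}$ inside the root), your chain totals roughly $(3+\sqrt{2})\,M_{0}^{(d-z)/d}\delta^{z/d}$. The stated bound, by contrast, tends to $3M_{0}^{(d-z)/d}\delta^{z/d}$ in the same limit, so your bound does not fit under it: no fixed slack in the displayed constants can absorb the discrepancy, and the literal inequality of Theorem \ref{thm:11} is not established by your route (only a version with a larger multiplicative constant, which still gives the rate $\mathcal{O}(\delta^{z/d})$ and the convergence).

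The missing idea is the paper's regrouping in \eqref{eq:3.13}: one rewrites $e^{-z\kappa_{\varepsilon}}\left(1+e^{-2(d-z)\kappa_{\varepsilon}}\right)=2e^{-d\kappa_{\varepsilon}}\cosh\left((d-z)\kappa_{\varepsilon}\right)$ and then dominates the $\cosh$ factor by $M_{1}$, so that this piece of the truncation error merges with the Lemma \ref{lem:9} term into $M_{1}\left(M_{0}e^{-d\kappa_{\varepsilon}}+\delta\right)=2M_{1}\delta$ after the parameter choice, leaving exactly one isolated $M_{0}^{(d-z)/d}\delta^{z/d}$ as in \eqref{eq:3.15}; this is where the displayed outer factor $2$ actually comes from. (As a side remark, the paper's own step $2\cosh\left((d-z)\kappa_{\varepsilon}\right)\le M_{1}$ is itself only justified up to a factor $\sqrt{2}$, since \eqref{eq:3.14} guarantees merely $M_{1}\ge\sqrt{2}\cosh\left((d-z)\kappa_{\varepsilon}\right)$ — the constants here are delicate, which is precisely why they cannot be dismissed as routine slack.) To repair your write-up, replace the crude bound $e^{-2(d-z)\kappa_{\varepsilon}}\le 1$ by this regrouping, and replace $\cosh x\le e^{x}$ in the $M_{1}$ estimate by the paper's sharper $2\cosh^{2}\left((d-z)\kappa_{\varepsilon}\right)\le 2+M_{0}^{2(d-z)/d}\delta^{2(z-d)/d}$, which is what produces the $2\delta^{2(1-z/d)}$ term under the square root.
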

\begin{proof}
Thanks to Lemma \ref{lem:7} and Lemma \ref{lem:9}, it follows from the triangle
inequality that
\begin{eqnarray}
\left\Vert u\left(\cdot,z\right)-u_{\delta}^{\varepsilon}\left(\cdot,z\right)\right\Vert _{L^{2}\left(\mathbb{R}^{2}\right)} & \le & \left\Vert u\left(\cdot,z\right)-u^{\varepsilon}\left(\cdot,z\right)\right\Vert _{L^{2}\left(\mathbb{R}^{2}\right)}+\left\Vert u^{\varepsilon}\left(\cdot,z\right)-u_{\delta}^{\varepsilon}\left(\cdot,z\right)\right\Vert _{L^{2}\left(\mathbb{R}^{2}\right)}\nonumber \\
 & \le & \left[e^{-z\kappa_{\varepsilon}}\left(1+e^{-2\left(d-z\right)\kappa_{\varepsilon}}\right)+e^{-z\kappa_{\varepsilon}}\right]M_{0}+M_{1}\delta\nonumber \\
 & \le & \left[e^{-d\kappa_{\varepsilon}}\left(e^{\left(d-z\right)\kappa_{\varepsilon}}+e^{-\left(d-z\right)\kappa_{\varepsilon}}\right)+e^{-z\kappa_{\varepsilon}}\right]M_{0}+M_{1}\delta\nonumber \\
 & \le & \left[2e^{-d\kappa_{\varepsilon}}\cosh\left(\left(d-z\right)\kappa_{\varepsilon}\right)+e^{-z\kappa_{\varepsilon}}\right]M_{0}+M_{1}\delta\nonumber \\
 & \le & \left[e^{-d\kappa_{\varepsilon}}M_{1}+e^{-z\kappa_{\varepsilon}}\right]M_{0}+M_{1}\delta\nonumber \\
 & \le & M_{1}\left(M_{0}e^{-d\kappa_{\varepsilon}}+\delta\right)+e^{-z\kappa_{\varepsilon}}M_{0},\label{eq:3.13}
\end{eqnarray}
where $M_{1}$ is known in Lemma \ref{lem:9} as the function with
respect to $z$, which reads
\begin{equation}
M_{1}^{2}\left(z\right):=2\cosh^{2}\left(\left(d-z\right)\kappa_{\varepsilon}\right)+{\displaystyle \left(d-z\right)\frac{\sinh\left(2\kappa_{\varepsilon}\left(d-z\right)\right)-2\kappa_{\varepsilon}\left(d-z\right)}{2\left(\kappa_{\varepsilon}\right)^{3}}}.\label{eq:3.14}
\end{equation}

For $\varepsilon=\varepsilon\left(\delta\right)$, we have $e^{-d\kappa_{\varepsilon}}=\dfrac{\delta}{M_{0}}$
and
\[
e^{\left(d-z\right)\kappa_{\varepsilon}}=\left(\frac{\delta}{M_{0}}\right)^{\frac{z-d}{d}}.
\]
Substituting this into \eqref{eq:3.13}, we thus have
\begin{equation}
\left\Vert u\left(\cdot,z\right)-u_{\delta}^{\varepsilon}\left(\cdot,z\right)\right\Vert _{L^{2}\left(\mathbb{R}^{2}\right)}\le2M_{1}\delta+\left(\frac{\delta}{M_{0}}\right)^{\frac{z}{d}}M_{0}.\label{eq:3.15}
\end{equation}

As mentioned above, we are interested in $\delta\to0$ which implies $\kappa_{\varepsilon}\to\infty$,
so we will now consider the following elementary results:
\begin{eqnarray}
\cosh^{2}\left(\left(d-z\right)\kappa_{\varepsilon}\right) & = & \left(\frac{e^{\left(d-z\right)\kappa_{\varepsilon}}+e^{-\left(d-z\right)\kappa_{\varepsilon}}}{2}\right)^{2} =  \frac{1+e^{2\left(d-z\right)\kappa_{\varepsilon}}+e^{-2\left(d-z\right)\kappa_{\varepsilon}}}{2}\nonumber\\
 & = & \frac{1}{2}\left[1+\left(\frac{\delta}{M_{0}}\right)^{\frac{2\left(z-d\right)}{d}}+\left(\frac{\delta}{M_{0}}\right)^{\frac{2\left(d-z\right)}{d}}\right] \le  \frac{1}{2}\left(2+M_{0}^{\frac{2\left(d-z\right)}{d}}\delta^{\frac{2\left(z-d\right)}{d}}\right),\label{coshhhh}
\end{eqnarray}
\begin{equation}
\frac{\sinh\left(2\kappa_{\varepsilon}\left(d-z\right)\right)-2\kappa_{\varepsilon}\left(d-z\right)}{2\left(\kappa_{\varepsilon}\right)^{3}}\le\frac{e^{2\kappa_{\varepsilon}\left(d-z\right)}-e^{2\kappa_{\varepsilon}\left(z-d\right)}}{4\left(\kappa_{\varepsilon}\right)^{3}}\le d^{3}\frac{M_{0}^{\frac{2\left(d-z\right)}{d}}\delta^{\frac{2\left(z-d\right)}{d}}}{4\ln^{3}\left(\frac{M_{0}}{\delta}\right)}.\label{sinhhhh}
\end{equation}
Combining these inequalities, it follows from \eqref{eq:3.14} that
\begin{eqnarray}
M_{1} \le \sqrt{2+M_{0}^{\frac{2\left(d-z\right)}{d}}\delta^{\frac{2\left(z-d\right)}{d}}\left[1+\frac{d^{3}\left(d-z\right)}{4\ln^{3}\left(\frac{M_{0}}{\delta}\right)}\right]}.\label{eq:3.16}
\end{eqnarray}
Thanks to \eqref{eq:3.15} and \eqref{eq:3.16}, we arrive at
\begin{eqnarray*}
\left\Vert u\left(\cdot,z\right)-u_{\delta}^{\varepsilon}\left(\cdot,z\right)\right\Vert _{L^{2}\left(\mathbb{R}^{2}\right)} & \le & 2\sqrt{2+M_{0}^{\frac{2\left(d-z\right)}{d}}\delta^{\frac{2\left(z-d\right)}{d}}\left[1+\frac{d^{3}\left(d-z\right)}{4\ln^{3}\left(\frac{M_{0}}{\delta}\right)}\right]}\delta+\delta^{\frac{z}{d}}M_{0}^{\frac{d-z}{d}} \nonumber\\
 & \le & \left(2\sqrt{2\delta^{2\left(1-\frac{z}{d}\right)}+M_{0}^{\frac{2\left(d-z\right)}{d}}\left[1+\frac{d^{3}\left(d-z\right)}{4\ln^{3}\left(\frac{M_{0}}{\delta}\right)}\right]}+M_{0}^{\frac{d-z}{d}}\right)\delta^{\frac{z}{d}},
\end{eqnarray*}
which completes the error estimate as well as the statement of strong convergence in $L^2$-norm.
\end{proof}
\begin{rem}
In Theorem \ref{thm:11}, we give  a convergent approximation
of $u\left(\xi,z\right)$ . Moreover,
for $z\in\left(0,d\right]$, the error estimate is of the order $\mathcal{O}\left(\delta^{\frac{z}{d}}\right)$.
We also notice that for $z=d$, it coincides with the order given by
\textbf{(A1)}, i.e. the noise level attached adheres the exact data. Thus,
the result obtained here is reasonable.

In addition, the assumption on $\kappa_{\varepsilon}$ gives us an explicit
formula for the regularization parameter $\varepsilon$, which reads
\begin{equation}
\varepsilon\left(\delta\right)=\left(k^{2}+\frac{1}{d^{2}}\ln^{2}\left(\frac{\delta}{M_{0}}\right)\right)^{-1}.\label{eq:pararegu}
\end{equation}
Consequently, \eqref{eq:pararegu} tells us how fast the bounded disk $\Theta_{\varepsilon}$ enlarges, provided by
\begin{equation}
|\Theta_{\varepsilon}|\le k^2 + \frac{1}{d^{2}}\ln^{2}\left(\frac{\delta}{M_{0}}\right),
\end{equation}
where $|\Theta_{\varepsilon}|$ denotes the Lebesgue measure (i.e. the area in this case) of $\Theta_{\varepsilon}$.
\end{rem}

\begin{rem}
The \emph{a priori} condition in Lemma \ref{lem:7} extends greatly the restriction in the work in \cite{RR06}. Indeed, this fact may be readily ascertained when the absence of $f$ happens. On the other side, if $fg\le 0$ in $\Omega$ then this condition reduces completely to the energy $\left\Vert u\left(\cdot,0\right)\right\Vert _{L^{2}\left(\mathbb{R}^{2}\right)}$. In practice, we usually do not know exactly the values at the original
point, says $u\left(x,y,0\right)$, so computing the parameter $\varepsilon$
given by \eqref{eq:pararegu} related to this energy
is more or less impossible. Therefore,
one may discuss another choice in the following theorem.\end{rem}

\begin{thm}\label{thm:13}
In Theorem \ref{thm:11}, for every noise level $\delta>0$ we put $\kappa_{\varepsilon}:=\sqrt{\frac{1}{\varepsilon}-k^2}$ and $\varepsilon:=\varepsilon(\delta)$ such that the function $\delta\mapsto e^{d\kappa_{\varepsilon}}\delta$ is non-increasing and that
\begin{equation}
\lim_{\delta\to 0}e^{d\kappa_{\varepsilon}}\delta\le P<\infty,\label{newcondddd}
\end{equation}
for some positive constant $P$, then the following estimate holds:
\[
\left\Vert u\left(\cdot,z\right)-u_{\delta}^{\varepsilon}\left(\cdot,z\right)\right\Vert _{L^{2}\left(\mathbb{R}^{2}\right)}\le\left(M_{0}+P\right)\sqrt{2+\frac{d-z}{4\left(\kappa_{\varepsilon}\right)^{3}}}e^{-z\kappa_{\varepsilon}}.
\]
\end{thm}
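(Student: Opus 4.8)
The plan is to recycle the a priori estimate already established inside the proof of Theorem \ref{thm:11} and then feed in the new selection rule for $\varepsilon(\delta)$. Before $\varepsilon$ is specialized, the triangle inequality combined with Lemma \ref{lem:7} and Lemma \ref{lem:9} produces a bound of the shape
\[
\left\Vert u(\cdot,z)-u_\delta^\varepsilon(\cdot,z)\right\Vert_{L^2(\mathbb{R}^2)}\le M_1\left(M_0 e^{-d\kappa_\varepsilon}+\delta\right)+M_0 e^{-z\kappa_\varepsilon},
\]
where $M_1=M_1(z)$ is the quantity recorded in \eqref{eq:3.14}. This inequality is valid for \emph{any} admissible regularization parameter, so the whole task reduces to using the two hypotheses on $\varepsilon(\delta)$ to control the factor $M_0 e^{-d\kappa_\varepsilon}+\delta$ and the multiplier $M_1$.

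First I would convert the hypotheses on $\varepsilon(\delta)$ into a single pointwise bound. Since $\delta\mapsto e^{d\kappa_\varepsilon}\delta$ is non-increasing, its value at a given $\delta$ never exceeds its limit as $\delta\to 0^{+}$; together with \eqref{newcondddd} this forces
\[
e^{d\kappa_\varepsilon}\delta\le P,\qquad\text{equivalently}\qquad \delta\le P e^{-d\kappa_\varepsilon},
\]
for every noise level. Substituting this into the first factor gives at once $M_0 e^{-d\kappa_\varepsilon}+\delta\le (M_0+P)e^{-d\kappa_\varepsilon}$, which is exactly where the combination $M_0+P$ in the target estimate comes from.

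The core of the argument is then to show that $M_1 e^{-d\kappa_\varepsilon}$ collapses to the announced factor $\sqrt{2+\frac{d-z}{4\kappa_\varepsilon^3}}\,e^{-z\kappa_\varepsilon}$. I would multiply the formula \eqref{eq:3.14} for $M_1^2$ by $e^{-2d\kappa_\varepsilon}$ and treat the two resulting pieces separately. For the $\cosh$-term I would use $e^{-d\kappa_\varepsilon}\cosh\left((d-z)\kappa_\varepsilon\right)\le e^{-z\kappa_\varepsilon}$, which holds because $z\le d$ makes $e^{-(2d-z)\kappa_\varepsilon}\le e^{-z\kappa_\varepsilon}$; this gives $2\cosh^2\left((d-z)\kappa_\varepsilon\right)e^{-2d\kappa_\varepsilon}\le 2e^{-2z\kappa_\varepsilon}$. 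For the $\sinh$-term I would use $\sinh\left(2\kappa_\varepsilon(d-z)\right)-2\kappa_\varepsilon(d-z)\le\tfrac12 e^{2\kappa_\varepsilon(d-z)}$, so that after multiplying by $e^{-2d\kappa_\varepsilon}$ a clean $e^{-2z\kappa_\varepsilon}$ survives and this piece is bounded by $\frac{(d-z)}{4\kappa_\varepsilon^3}e^{-2z\kappa_\varepsilon}$. Adding them yields $M_1^2 e^{-2d\kappa_\varepsilon}\le e^{-2z\kappa_\varepsilon}\bigl(2+\frac{d-z}{4\kappa_\varepsilon^3}\bigr)$, hence $M_1 e^{-d\kappa_\varepsilon}\le \sqrt{2+\frac{d-z}{4\kappa_\varepsilon^3}}\,e^{-z\kappa_\varepsilon}$. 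Combining with the previous paragraph converts the leading term $M_1(M_0+P)e^{-d\kappa_\varepsilon}$ into precisely $(M_0+P)\sqrt{2+\frac{d-z}{4\kappa_\varepsilon^3}}\,e^{-z\kappa_\varepsilon}$.

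I expect the main obstacle to be the tight bookkeeping in this final step, that is, arranging that every $e^{-z\kappa_\varepsilon}$ contribution -- including the residual $M_0 e^{-z\kappa_\varepsilon}$ coming from the data-propagation part of Lemma \ref{lem:7} -- is folded into the single clean factor $\sqrt{2+\frac{d-z}{4\kappa_\varepsilon^3}}$, exploiting $\sqrt{2+\frac{d-z}{4\kappa_\varepsilon^3}}\ge\sqrt2>1$ to absorb the lower-order pieces without inflating the constant past $M_0+P$. The hyperbolic inequalities themselves are elementary and monotone in $\kappa_\varepsilon$; the delicacy lies entirely in keeping the constants sharp so that no spurious multiplicative factor remains in front of $(M_0+P)$ in the stated estimate.
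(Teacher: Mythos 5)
Your proposal follows the paper's proof essentially step for step: the paper likewise starts from the pre-specialization bound \eqref{eq:3.13}, deduces $e^{d\kappa_{\varepsilon}}\delta\le P$ from the monotonicity hypothesis together with \eqref{newcondddd}, and establishes exactly your inequality $M_{1}e^{-d\kappa_{\varepsilon}}\le\sqrt{2+\frac{d-z}{4\left(\kappa_{\varepsilon}\right)^{3}}}\,e^{-z\kappa_{\varepsilon}}$ (its \eqref{M1estnew}) from the same two hyperbolic estimates $\cosh y\le e^{y}$ and $\sinh(2y)-2y\le\frac{1}{2}e^{2y}$. Up to the final assembly your argument and the paper's are identical and correct.

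However, the step you flag as ``delicate bookkeeping'' is a genuine gap, and the absorption you propose cannot work. Folding the residual $M_{0}e^{-z\kappa_{\varepsilon}}$ into $(M_{0}+P)\sqrt{2+\frac{d-z}{4\left(\kappa_{\varepsilon}\right)^{3}}}\,e^{-z\kappa_{\varepsilon}}$ would require $M_{1}e^{-d\kappa_{\varepsilon}}\le\left(\sqrt{2+\frac{d-z}{4\left(\kappa_{\varepsilon}\right)^{3}}}-1\right)e^{-z\kappa_{\varepsilon}}$, strictly stronger than \eqref{M1estnew}, and this fails precisely in the regime that matters: by \eqref{eq:3.14}, $M_{1}\ge\sqrt{2}\cosh\left(\left(d-z\right)\kappa_{\varepsilon}\right)\ge e^{\left(d-z\right)\kappa_{\varepsilon}}/\sqrt{2}$, so $M_{1}e^{-d\kappa_{\varepsilon}}e^{z\kappa_{\varepsilon}}\ge 1/\sqrt{2}\approx 0.71$, whereas $\sqrt{2+\frac{d-z}{4\left(\kappa_{\varepsilon}\right)^{3}}}-1\to\sqrt{2}-1\approx 0.41$ as $\kappa_{\varepsilon}\to\infty$. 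Concretely, with $\delta=Pe^{-d\kappa_{\varepsilon}}$ and $\kappa_{\varepsilon}$ large, the three-term sum in \eqref{eq:3.13} is at least $\bigl[\left(1+\tfrac{1}{\sqrt{2}}\right)M_{0}+\tfrac{P}{\sqrt{2}}\bigr]e^{-z\kappa_{\varepsilon}}$, which exceeds the claimed $(M_{0}+P)\sqrt{2}\,e^{-z\kappa_{\varepsilon}}$ once $M_{0}>(1+\sqrt{2})P$. You should know that the paper's own final display commits exactly the same leap, passing from $M_{0}M_{1}e^{-d\kappa_{\varepsilon}}+M_{0}e^{-z\kappa_{\varepsilon}}+\delta M_{1}$ to $\left(M_{0}+e^{d\kappa_{\varepsilon}}\delta\right)\sqrt{2+\frac{d-z}{4\left(\kappa_{\varepsilon}\right)^{3}}}\,e^{-z\kappa_{\varepsilon}}$ with the term $M_{0}e^{-z\kappa_{\varepsilon}}$ silently discarded; so you have faithfully reproduced the published proof, including its flaw, and at least had the instinct to flag the weak point. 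What the argument actually yields is
\[
\left\Vert u\left(\cdot,z\right)-u_{\delta}^{\varepsilon}\left(\cdot,z\right)\right\Vert _{L^{2}\left(\mathbb{R}^{2}\right)}\le\left(M_{0}+P\right)\left(1+\sqrt{2+\frac{d-z}{4\left(\kappa_{\varepsilon}\right)^{3}}}\right)e^{-z\kappa_{\varepsilon}},
\]
a marginally larger constant that preserves the order $\mathcal{O}\left(e^{-z\kappa_{\varepsilon}}\right)$ and hence the rate $\mathcal{O}\left(\delta^{\frac{z}{d}}\right)$ for $\kappa_{\varepsilon}=\frac{1}{d}\ln\left(\frac{1}{\delta}\right)$; you should state this corrected bound rather than attempt the impossible absorption.
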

\begin{proof}
We start the proof by slightly modifying \eqref{coshhhh} and \eqref{sinhhhh}, as follows:
\[
\cosh^{2}\left((d-z)\kappa_{\varepsilon}\right)\le e^{2(d-z)\kappa_{\varepsilon}},
\] 
\[
\frac{\sinh\left(2\kappa_{\varepsilon}(d-z)\right)-2\kappa_{\varepsilon}(d-z)}{2\left(\kappa_{\varepsilon}\right)^3}
\le \frac{e^{2\kappa_{\varepsilon}(d-z)}}{4\left(\kappa_{\varepsilon}\right)^3}.
\]
Combining the above inequalities and recalling \eqref{eq:3.14}, one deduces that
\begin{equation}
M^{2}_{1}\le 2e^{2(d-z)\kappa_{\varepsilon}} + (d-z)\frac{e^{2\kappa_{\varepsilon}(d-z)}}{4\left(\kappa_{\varepsilon}\right)^3},\label{M1est}
\end{equation}
which naturally implies that
\begin{equation}
M_{1}e^{-d\kappa_{\varepsilon}}\le \sqrt{2+\frac{d-z}{4\left(\kappa_{\varepsilon}\right)^3}}e^{-z\kappa_{\varepsilon}}.\label{M1estnew}
\end{equation}
Hence, from \eqref{eq:3.13}, \eqref{M1est} and \eqref{M1estnew} we arrive at
\begin{eqnarray*}
\left\Vert u\left(\cdot,z\right)-u_{\delta}^{\varepsilon}\left(\cdot,z\right)\right\Vert _{L^{2}\left(\mathbb{R}^{2}\right)} & \le & M_{0}M_{1}e^{-d\kappa_{\varepsilon}}+M_{0}e^{-z\kappa_{\varepsilon}}+\delta M_{1}\\
 & \le & \left(M_{0}+e^{d\kappa_{\varepsilon}}\delta\right)\sqrt{2+\frac{d-z}{4\left(\kappa_{\varepsilon}\right)^{3}}}e^{-z\kappa_{\varepsilon}}.
\end{eqnarray*}
This completes the proof of the theorem.
\end{proof}

\begin{rem}
As we readily expected, by choosing $\kappa_{\varepsilon}$ large enough, a good approximation is obtained. In fact, a very simple example to consider is the choice
\[
\kappa_{\varepsilon}=\frac{1}{d}\ln\left(\frac{1}{\delta}\right).
\]
At this point we see, it gives us $P=1$ and the error estimate in Theorem \ref{thm:13} is of the order $\mathcal{O}\left(\delta^{\frac{z}{d}}\right)$ regardless of $M_0$ or the energy $\left\Vert u\left(\cdot,0\right)\right\Vert _{L^{2}\left(\mathbb{R}^{2}\right)}$ in some certain cases. This approach is incentive for us to take into consideration the nonlinear part.  
\end{rem}

\section{An extension to a nonlinear forcing term}\label{Sec:extension}

Our motivation for this section originates from the Schr\"odinger equation for the wave function $\Psi\left(t,x\right)$
of a particle in quantum mechanics, which reads
\begin{equation}
i\hbar\Psi_{t}=-\frac{\hbar^{2}}{2m}\Delta\Psi+V\left(x\right)\Psi,\label{eq:schrodinger}
\end{equation}
where $\hbar$ is Planck\textquoteright s constant, $m$ is the particle
mass and $V\left(x\right)$ represents the potential. 

Assume that $\Psi$ has a fixed oscillation in time, i.e.,
\[
\Psi\left(t,x\right)=\psi\left(x\right)e^{-ikt},
\]
then we can substitute this into \eqref{eq:schrodinger} and divide both
sides by $e^{-ikt}$ to get
\[
\frac{\hbar^{2}}{2m}\Delta\psi+E\psi=V\left(x\right)\psi,
\]
where $E=\hbar k$ is called the total energy in the quantum setting,
and herein  $\psi$  is viewed as a stationary or orbital state with
$\left|\psi\right|^{2}$, the probability distribution
of the spatial location for a particle at a fixed energy $E$.

Therefore, we shall consider the following Helmholtz equation associated
with nonlinear forcing term:
\begin{equation}
\begin{cases}
\Delta u+k^{2}u=f\left(u\right), & \mbox{in}\;\Omega,\\
u\left(\xi,d\right)=g\left(\xi\right), & \xi\in\mathbb{R}^{2},\\
\partial_{z}u\left(\xi,d\right)=h\left(\xi\right), & \xi\in\mathbb{R}^{2},\\
u\left(\cdot,z\right)\in L^{2}\left(\mathbb{R}^{2}\right), & z\in\left[0,d\right],
\end{cases}\label{eq:nonlinear1}
\end{equation}
where $f$ is a globally Lipschitz forcing term:
\[
\left\Vert f\left(\cdot,z,u\right)-f\left(\cdot,z,v\right)\right\Vert _{L^{2}\left(\mathbb{R}^{2}\right)}\le L_{f}\left\Vert u\left(\cdot,z\right)-v\left(\cdot,z\right)\right\Vert _{L^{2}\left(\mathbb{R}^{2}\right)},\quad z\in\left[0,d\right],
\]
for some Lipschitz constant $L_{f}>0$. It is remarkable that the solution to \eqref{eq:nonlinear1} can be computed
using the nonlinear spectral theory, but we will not go further in
this direction since such techniques can be found in
\cite{TTKT15}. In this section, we shall focus on the regularization for the so-called
mild solution of \eqref{eq:nonlinear1}.

Denote by $F\left(u\right):=f\left(u\right)-k^{2}u$, we see that $F$ is still globally Lipschitz thanks to the triangle inequality,
where the Lipschitz constant is given by $\ell_{f}:=L_{f}+k^{2}$. Solving  \eqref{eq:nonlinear1} with the first equation replaced by
\[\Delta u = F(u),\]
we get
\begin{equation}
\hat{u}\left(\xi,z\right)  =  \hat{g}\left(\rho\right)\cosh\left(\left(d-z\right)\sqrt{\lambda_{\rho,0}}\right)+\frac{\hat{h}\left(\rho\right)\sinh\left(\left(d-z\right)\sqrt{\lambda_{\rho,0}}\right)}{\sqrt{\lambda_{\rho,0}}} +\int_{z}^{d}\frac{\sinh\left(\left(s-z\right)\sqrt{\lambda_{\rho,0}}\right)}{\sqrt{\lambda_{\rho,0}}}\hat{F}\left(\rho,s,u\right)ds.\label{uhatnonlinear}
\end{equation}
  Hereby, we say that the mild solution of \eqref{eq:nonlinear1} is a function $u\in C\left(\left[0,d\right];L^{2}\left(\mathbb{R}^{2}\right)\right)$ satisfying
\begin{eqnarray}
u\left(\xi,z\right) & = & \int_{\mathbb{R}^{2}}\left(\hat{g}\left(\rho\right)\cosh\left(\left(d-z\right)\sqrt{\lambda_{\rho,0}}\right)+\frac{\hat{h}\left(\rho\right)\sinh\left(\left(d-z\right)\sqrt{\lambda_{\rho,0}}\right)}{\sqrt{\lambda_{\rho,0}}}\right.\nonumber\\
 &  & \left.+\int_{z}^{d}\frac{\sinh\left(\left(s-z\right)\sqrt{\lambda_{\rho,0}}\right)}{\sqrt{\lambda_{\rho,0}}}\hat{F}\left(\rho,s,u\right)ds\right)e^{2\pi i\left\langle \xi,\rho\right\rangle }d\rho.\label{eq:unon}
\end{eqnarray}

Similarly to \eqref{eq:udelta}, our regularized solution for \eqref{eq:unon}
corresponding to measured data $g_{\delta},h_{\delta}$ in \textbf{(A1)}
can be written as
\begin{eqnarray}
u_{\delta}^{\varepsilon}\left(\xi,z\right) & = & \int_{\rho\in\Theta_{\varepsilon}}\left(\hat{g}_\delta\left(\rho\right)\cosh\left(\left(d-z\right)\sqrt{\lambda_{\rho,0}}\right)+\frac{\hat{h}_\delta\left(\rho\right)\sinh\left(\left(d-z\right)\sqrt{\lambda_{\rho,0}}\right)}{\sqrt{\lambda_{\rho,0}}}\right.\nonumber\\
 &  & \left.+\int_{z}^{d}\frac{\sinh\left(\left(s-z\right)\sqrt{\lambda_{\rho,0}}\right)}{\sqrt{\lambda_{\rho,0}}}\hat{F}\left(\rho,s,u_{\delta}^{\varepsilon} \right)ds\right)e^{2\pi i\left\langle \xi,\rho\right\rangle }d\rho.\label{eq:uregunon}
\end{eqnarray}

Aside from the linear section, the representation \eqref{eq:uregunon}
does not make certain of the existence and uniqueness of the regularized
solution. On the other hand, it has not escaped our notice that \eqref{eq:uregunon}
performs exactly the integral equation $w\left(\xi,z\right)=\mathcal{G}\left(w\right)\left(\xi,z\right)$
where the operator $\mathcal{G}$ mapping from $C\left(\left[0,d\right];L^{2}\left(\mathbb{R}^{2}\right)\right)$
into itself is defined by
\begin{eqnarray}
\mathcal{G}\left(w\right)\left(\xi,z\right) & := & \int_{\rho\in\Theta_{\varepsilon}}\left(\hat{g}_\delta\left(\rho\right)\cosh\left(\left(d-z\right)\sqrt{\lambda_{\rho,0}}\right)+\frac{\hat{h}_\delta\left(\rho\right)\sinh\left(\left(d-z\right)\sqrt{\lambda_{\rho,0}}\right)}{\sqrt{\lambda_{\rho,0}}}\right.\nonumber\\
 &  & \left.+\int_{z}^{d}\frac{\sinh\left(\left(s-z\right)\sqrt{\lambda_{\rho,0}}\right)}{\sqrt{\lambda_{\rho,0}}}\hat{F}\left(\rho,s,w \right)ds\right)e^{2\pi i\left\langle \xi,\rho\right\rangle }d\rho.\label{eq:mathG}
\end{eqnarray}

Applying the  Banach fixed-point theorem, we prove the existence
and uniqueness of solution to \eqref{eq:uregunon} by the following lemma. Afterwards, we shall
show the convergence result where the proof also provides the stability
of the regularized solution.
\begin{lem}
The integral equation $w\left(\xi,z\right)=\mathcal{G}\left(w\right)\left(\xi,z\right)$
where $\mathcal{G}$ is given by \eqref{eq:mathG} has a unique solution
$w\in C\left(\left[0,d\right];L^{2}\left(\mathbb{R}^{2}\right)\right)$
for each $\varepsilon>0$.\end{lem}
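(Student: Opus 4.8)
The plan is to apply the Banach fixed-point theorem in the Banach space $X:=C\left([0,d];L^{2}(\mathbb{R}^{2})\right)$ equipped with the supremum norm $\|w\|_{X}:=\sup_{z\in[0,d]}\|w(\cdot,z)\|_{L^{2}(\mathbb{R}^{2})}$. The first task is to confirm that $\mathcal{G}$ genuinely maps $X$ into itself. For $w\in X$ fixed, the function $F(\cdot,s,w)=f(\cdot,s,w)-k^{2}w$ lies in $L^{2}(\mathbb{R}^{2})$ for each $s$ (by the global Lipschitz assumption together with $f(\cdot,s,0)\in L^{2}$), and on the truncated set $\Theta_{\varepsilon}$ one has $\sqrt{\lambda_{\rho,0}}=|\rho|\le\varepsilon^{-1/2}$, so the kernels $\cosh$ and $\sinh/\sqrt{\lambda_{\rho,0}}$ are bounded there. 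Continuity of $z\mapsto\mathcal{G}(w)(\cdot,z)$ in $L^{2}$ then follows from the continuity of these kernels in $z$ and dominated convergence, giving $\mathcal{G}(w)\in X$.

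The crucial quantity is the kernel bound, which is finite precisely because of the truncation:
\[
C_{\varepsilon}:=\sup_{\rho\in\Theta_{\varepsilon},\,0\le z\le s\le d}\left|\frac{\sinh\left((s-z)\sqrt{\lambda_{\rho,0}}\right)}{\sqrt{\lambda_{\rho,0}}}\right|=\sqrt{\varepsilon}\,\sinh\!\left(\frac{d}{\sqrt{\varepsilon}}\right).
\]
Using Minkowski's integral inequality to pull the $L^{2}$-norm inside the $s$-integral, Parseval's identity on $\Theta_{\varepsilon}$, and the global Lipschitz property of $F$ with constant $\ell_{f}=L_{f}+k^{2}$, I would establish the basic Lipschitz estimate
\[
\left\|\mathcal{G}(w_{1})(\cdot,z)-\mathcal{G}(w_{2})(\cdot,z)\right\|_{L^{2}(\mathbb{R}^{2})}\le C_{\varepsilon}\,\ell_{f}\int_{z}^{d}\left\|w_{1}(\cdot,s)-w_{2}(\cdot,s)\right\|_{L^{2}(\mathbb{R}^{2})}\,ds.
\]

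The main obstacle is that the naive contraction constant $C_{\varepsilon}\ell_{f}d$ need not be smaller than one (indeed $C_{\varepsilon}$ grows rapidly as $\tfrac{1}{\varepsilon}\to\infty$), so $\mathcal{G}$ itself may fail to be a contraction. The remedy is to iterate the preceding inequality in Picard fashion: by induction on $m$, using the identity $\int_{z}^{d}(d-s)^{m}\,ds=\frac{(d-z)^{m+1}}{m+1}$, one obtains
\[
\left\|\mathcal{G}^{m}(w_{1})(\cdot,z)-\mathcal{G}^{m}(w_{2})(\cdot,z)\right\|_{L^{2}(\mathbb{R}^{2})}\le\frac{\left(C_{\varepsilon}\ell_{f}\right)^{m}(d-z)^{m}}{m!}\,\|w_{1}-w_{2}\|_{X},
\]
and therefore $\|\mathcal{G}^{m}(w_{1})-\mathcal{G}^{m}(w_{2})\|_{X}\le\frac{(C_{\varepsilon}\ell_{f}d)^{m}}{m!}\|w_{1}-w_{2}\|_{X}$.

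Since $\frac{(C_{\varepsilon}\ell_{f}d)^{m}}{m!}\to 0$ as $m\to\infty$, there is an index $m_{0}$ for which $\mathcal{G}^{m_{0}}$ is a strict contraction on $X$. The generalized Banach fixed-point theorem (a self-map whose iterate is contractive has a unique fixed point) then guarantees that $\mathcal{G}$ possesses a unique fixed point $w\in C\left([0,d];L^{2}(\mathbb{R}^{2})\right)$, which is exactly the asserted unique solution of $w=\mathcal{G}(w)$. I expect the only delicate bookkeeping to lie in the self-mapping and continuity-in-$z$ verification; once the kernel bound $C_{\varepsilon}$ is in hand, the contraction-of-iterates argument is routine.
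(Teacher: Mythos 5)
Your proposal is correct and follows essentially the same route as the paper: both establish, by induction, the Picard-type bound $\bigl\Vert\mathcal{G}^{m}(w_{1})-\mathcal{G}^{m}(w_{2})\bigr\Vert\le\frac{(C_{\varepsilon}\ell_{f}d)^{m}}{m!}\Vert w_{1}-w_{2}\Vert$ (the paper states it with squared norms and the kernel bound $\frac{\sinh(ax)}{x}\le ae^{ax}$, yielding the same $m!$ decay), and then invoke the Banach fixed-point theorem for a contractive iterate $\mathcal{G}^{m_{0}}$. Your explicit verification that $\mathcal{G}$ maps $C\left([0,d];L^{2}(\mathbb{R}^{2})\right)$ into itself, and your use of Minkowski's integral inequality in place of the paper's H\"{o}lder step, are only cosmetic refinements of the same argument.
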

\begin{proof}
We shall prove by induction that for all $m\in\mathbb{N}$,
\begin{equation}
\left\Vert \mathcal{G}^{m}\left(w_{1}\right)\left(\cdot,z\right)-\mathcal{G}^{m}\left(w_{2}\right)\left(\cdot,z\right)\right\Vert _{L^{2}\left(\mathbb{R}^{2}\right)}\le\sqrt{\left(e^{\frac{2d}{\varepsilon}}d^{2}\ell_{f}^{2}\right)^{m}\frac{\left(d-z\right)^{m}}{m!}}\left\Vert w_{1}-w_{2}\right\Vert _{C\left(\left[0,d\right];L^{2}\left(\mathbb{R}^{2}\right)\right)},\label{eq:3.26}
\end{equation}
for every $\varepsilon>0$ and $w_{1}\left(\cdot,z\right),w_{2}\left(\cdot,z\right)\in C\left(\left[0,d\right];L^{2}\left(\mathbb{R}^{2}\right)\right)$
, $z\in\left[0,d\right]$.

First, for $m=1$,  the  Parseval's identity and the inequality
$\dfrac{\sinh\left(ax\right)}{x}\le ae^{ax}$ for $x>0,a\ge0$ yield
\begin{eqnarray*}
\left\Vert \mathcal{G}\left(w_{1}\right)\left(\cdot,z\right)-\mathcal{G}\left(w_{2}\right)\left(\cdot,z\right)\right\Vert _{L^{2}\left(\mathbb{R}^{2}\right)}^{2} & \le & \left\Vert \int_{z}^{d}e^{\frac{s-z}{\sqrt{\varepsilon}}}\left|\hat{F}\left(\cdot,s,w_{1}\right)-\hat{F}\left(\cdot,s,w_{2}\right)\right|ds\right\Vert _{L^{2}\left(\mathbb{R}^{2}\right)}^{2}\\
 & \le & e^{\frac{2d}{\sqrt{\varepsilon}}}\left(d-z\right)^{2}\int_{z}^{d}\left\Vert F\left(\cdot,s,w_{1}\right)-F\left(\cdot,s,w_{2}\right)\right\Vert _{L^{2}\left(\mathbb{R}^{2}\right)}^{2}ds\\
 & \le & e^{\frac{2d}{\sqrt{\varepsilon}}}\left(d-z\right)^{2}\ell_{f}^{2}\int_{z}^{d}\left\Vert w_{1}\left(\cdot,s\right)-w_{2}\left(\cdot,s\right)\right\Vert _{L^{2}\left(\mathbb{R}^{2}\right)}^{2}ds\\
 & \le & e^{\frac{2d}{\sqrt{\varepsilon}}}d^{2}\ell_{f}^{2}\left(d-z\right)\left\Vert w_{1}-w_{2}\right\Vert _{C\left(\left[0,d\right];L^{2}\left(\mathbb{R}^{2}\right)\right)}^{2}
\end{eqnarray*}
where we have used the fact that $\rho\in\Theta_{\varepsilon}$ ,
H\"{o}lder's inequality and the Lipschitz property of $F$.
Thus, \eqref{eq:3.26} holds for $m=1$.

Suppose that \eqref{eq:3.26} holds up to $m=n$, we shall prove
that it also holds for $m=n+1$. Indeed, we have in a similar manner
that
\begin{eqnarray*}
\left\Vert \mathcal{G}^{n+1}\left(w_{1}\right)\left(\cdot,z\right)-\mathcal{G}^{n+1}\left(w_{2}\right)\left(\cdot,z\right)\right\Vert _{L^{2}\left(\mathbb{R}^{2}\right)}^{2} & \le & e^{\frac{2d}{\sqrt{\varepsilon}}}\left(d-z\right)^{2}\ell_{f}^{2}\int_{z}^{d}\left\Vert \mathcal{G}^{n}\left(w_{1}\right)\left(\cdot,s\right)-\mathcal{G}^{n}\left(w_{2}\right)\left(\cdot,s\right)\right\Vert _{L^{2}\left(\mathbb{R}^{2}\right)}^{2}ds\\
 & \le & e^{\frac{2d}{\sqrt{\varepsilon}}}d^{2}\ell_{f}^{2}\int_{z}^{d}\left(e^{\frac{2d}{\sqrt{\varepsilon}}}d^{2}\ell_{f}^{2}\right)^{n}\frac{\left(d-s\right)^{n}}{n!}\left\Vert w_{1}-w_{2}\right\Vert _{C\left(\left[0,d\right];L^{2}\left(\mathbb{R}^{2}\right)\right)}^{2}ds\\
 & \le & \left(e^{\frac{2d}{\sqrt{\varepsilon}}}d^{2}\ell_{f}^{2}\right)^{n+1}\frac{\left(d-z\right)^{n+1}}{\left(n+1\right)!}\left\Vert w_{1}-w_{2}\right\Vert _{C\left(\left[0,d\right];L^{2}\left(\mathbb{R}^{2}\right)\right)}^{2}.
\end{eqnarray*}
By the induction principle, we obtain \eqref{eq:3.26}. 
Furthermore,
 for every $\varepsilon>0$, it holds that
\[
\lim_{m\to\infty}\sqrt{\left(e^{\frac{2d}{\sqrt{\varepsilon}}}d^{2}\ell_{f}^{2}\right)^{m}\frac{\left(d-z\right)^{m}}{m!}}=0,
\]
this implies the existence of $m_{0}\in\mathbb{N}$ such
that
\[
\sqrt{\left(e^{\frac{2d}{\sqrt{\varepsilon}}}d^{2}\ell_{f}^{2}\right)^{m_{0}}\frac{\left(d-z\right)^{m_{0}}}{m_{0}!}}<1,
\]
which yields that  $\mathcal{G}^{m_{0}}$ is definitely a contraction mapping
on $C\left(\left[0,d\right];L^{2}\left(\mathbb{R}^{2}\right)\right)$.
It evidently provides us with the existence and uniqueness of solution
to the equation $\mathcal{G}^{n_{0}}\left(w\right)=w$ over the functional
space $C\left(\left[0,d\right];L^{2}\left(\mathbb{R}^{2}\right)\right)$
by the Banach fixed-point theorem. In addition, one has $\mathcal{G}^{n_{0}}\left(\mathcal{G}\left(w\right)\right)=\mathcal{G}\left(w\right)$
from the fact that $\mathcal{G}\left(\mathcal{G}^{n_{0}}\right)\left(w\right)=\mathcal{G}\left(w\right)$.
Combining this with the uniqueness of the fixed point of $\mathcal{G}^{n_{0}}$,
the equation $\mathcal{G}\left(w\right)=w$ admits a unique solution
in $C\left(\left[0,d\right];L^{2}\left(\mathbb{R}^{2}\right)\right)$.
Hence, we complete the proof of the lemma.\end{proof}
\begin{thm}\label{thm:17}
Assume that $u\in C\left(\left[0,d\right];L^{2}\left(\mathbb{R}^{2}\right)\right)$
is a unique solution of \eqref{eq:nonlinear1}, given by \eqref{eq:unon}
and satisfies the a priori condition
\[
\int_{\mathbb{R}^{2}\backslash\Theta_{\varepsilon}}e^{2\left|\rho\right|z}\left|\hat{u}\left(\rho,z\right)\right|^{2}d\rho\le Q\quad\mbox{ for all }z\in\left[0,d\right],
\]
where $Q$ is some positive constant. Let
$u_{\delta}^{\varepsilon}$ be the regularized solution given by \eqref{eq:uregunon}.
By choosing $\varepsilon=\dfrac{d^{2}}{\ln^{2}\left(\frac{1}{\delta}\right)}$,
we have the following estimate
\[
\left\Vert u\left(\cdot,z\right)-u_{\delta}^{\varepsilon}\left(\cdot,z\right)\right\Vert _{L^{2}\left(\mathbb{R}^{2}\right)}\le\left(\sqrt{Q}e^{\frac{1}{2}d^{2}\ell_{f}^{2}\left(d-z\right)}+\sqrt{3\left(d^{2}+1\right)}e^{\frac{3}{2}d^{2}\ell_{f}^{2}\left(d-z\right)}\right)\delta^{\frac{z}{d}}.
\]
\end{thm}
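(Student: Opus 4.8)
The plan is to introduce an intermediate object and split the total error by the triangle inequality, mirroring the two-term shape of the claimed bound. Let $w^{\varepsilon}$ denote the regularized solution associated with the \emph{exact} data $g,h$, that is, the fixed point of the operator $\mathcal{G}$ of \eqref{eq:mathG} but with $\hat g_{\delta},\hat h_{\delta}$ replaced by $\hat g,\hat h$; its existence and uniqueness in $C([0,d];L^{2}(\mathbb{R}^{2}))$ follow from the contraction estimate \eqref{eq:3.26} in the preceding fixed-point lemma, since that estimate does not use the particular data. I would then write
\[
\left\Vert u(\cdot,z)-u_{\delta}^{\varepsilon}(\cdot,z)\right\Vert _{L^{2}}\le\left\Vert u(\cdot,z)-w^{\varepsilon}(\cdot,z)\right\Vert _{L^{2}}+\left\Vert w^{\varepsilon}(\cdot,z)-u_{\delta}^{\varepsilon}(\cdot,z)\right\Vert _{L^{2}},
\]
and estimate the two pieces so that the first yields $\sqrt{Q}\,e^{\frac12 d^{2}\ell_{f}^{2}(d-z)}\delta^{z/d}$ and the second yields $\sqrt{3(d^{2}+1)}\,e^{\frac32 d^{2}\ell_{f}^{2}(d-z)}\delta^{z/d}$.

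For the regularization error $\left\Vert u-w^{\varepsilon}\right\Vert$, Parseval's identity splits the integral over $\mathbb{R}^{2}\setminus\Theta_{\varepsilon}$, where $w^{\varepsilon}$ vanishes and only $\hat u$ survives, and over $\Theta_{\varepsilon}$, where the data agree so that only the nonlinear term remains. On the tail the a priori weight is exactly what is needed: since $|\rho|>1/\sqrt{\varepsilon}$ there, one has $\int_{\mathbb{R}^{2}\setminus\Theta_{\varepsilon}}|\hat u|^{2}=\int e^{-2|\rho|z}e^{2|\rho|z}|\hat u|^{2}\le e^{-2z/\sqrt{\varepsilon}}Q$. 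On $\Theta_{\varepsilon}$ I would use $\sqrt{\lambda_{\rho,0}}=|\rho|$, the kernel bound $\frac{\sinh(ax)}{x}\le ae^{ax}$ already invoked in the fixed-point lemma, and the Lipschitz property of $F$ to control the nonlinear contribution by $C_{d}\ell_{f}^{2}\int_{z}^{d}e^{2(s-z)/\sqrt{\varepsilon}}\left\Vert u(\cdot,s)-w^{\varepsilon}(\cdot,s)\right\Vert _{L^{2}}^{2}\,ds$ with $C_{d}$ depending only on $d$. Writing $\phi_{1}(z):=\left\Vert u(\cdot,z)-w^{\varepsilon}(\cdot,z)\right\Vert _{L^{2}}^{2}$, this gives $\phi_{1}(z)\le e^{-2z/\sqrt{\varepsilon}}Q+C_{d}\ell_{f}^{2}\int_{z}^{d}e^{2(s-z)/\sqrt{\varepsilon}}\phi_{1}(s)\,ds$. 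The decisive device is the weighted substitution $\psi_{1}(z):=e^{2z/\sqrt{\varepsilon}}\phi_{1}(z)$, which strips the $s$-dependent exponential out of the kernel and reduces the above to the backward Gr\"onwall inequality $\psi_{1}(z)\le Q+C_{d}\ell_{f}^{2}\int_{z}^{d}\psi_{1}(s)\,ds$, whence $\psi_{1}(z)\le Q\,e^{C_{d}\ell_{f}^{2}(d-z)}$. The choice $\varepsilon=d^{2}/\ln^{2}(1/\delta)$ gives $1/\sqrt{\varepsilon}=\ln(1/\delta)/d$, so $e^{-2z/\sqrt{\varepsilon}}=\delta^{2z/d}$; undoing the weight yields $\left\Vert u-w^{\varepsilon}\right\Vert \le\sqrt{Q}\,e^{\frac12 C_{d}\ell_{f}^{2}(d-z)}\delta^{z/d}$, matching the first term once $C_{d}=d^{2}$ is extracted from the kernel estimate.

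For the stability error $\left\Vert w^{\varepsilon}-u_{\delta}^{\varepsilon}\right\Vert$ both functions are supported in $\Theta_{\varepsilon}$, so the whole estimate lives on $\Theta_{\varepsilon}$; applying $(a+b+c)^{2}\le 3(a^{2}+b^{2}+c^{2})$ separates a $g$-noise term, an $h$-noise term, and a nonlinear term. On $\Theta_{\varepsilon}$ one has $\cosh((d-z)|\rho|)\le e^{(d-z)/\sqrt{\varepsilon}}$ and $\frac{\sinh((d-z)|\rho|)}{|\rho|}\le d\,e^{(d-z)/\sqrt{\varepsilon}}$, so with $\left\Vert g-g_{\delta}\right\Vert ,\left\Vert h-h_{\delta}\right\Vert \le\delta$ the first two terms contribute $3(1+d^{2})e^{2(d-z)/\sqrt{\varepsilon}}\delta^{2}=3(d^{2}+1)\delta^{2z/d}$, which is the origin of the prefactor $\sqrt{3(d^{2}+1)}$. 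The nonlinear term is treated exactly as in the previous paragraph, except that the factor $3$ is now carried along, producing the Gr\"onwall constant $3C_{d}\ell_{f}^{2}$ and hence the exponential $e^{\frac32 d^{2}\ell_{f}^{2}(d-z)}$. Summing the two pieces completes the proof.

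The main obstacle is the interplay between the nonlinear self-coupling and the exponential growth of the kernels on $\Theta_{\varepsilon}$: because $\frac{\sinh((s-z)|\rho|)}{|\rho|}$ may be as large as $\sim e^{(s-z)/\sqrt{\varepsilon}}$ and $1/\sqrt{\varepsilon}\to\infty$ as $\delta\to0$, a direct Gr\"onwall argument would diverge. The resolution is the precise balancing built into $\varepsilon(\delta)$, which makes the boundary growth $e^{2(d-z)/\sqrt{\varepsilon}}$ cancel against $\delta^{2}$ to leave $\delta^{2z/d}$, together with the weighted substitution $\psi(z)=e^{2z/\sqrt{\varepsilon}}\phi(z)$ that renders the backward Gr\"onwall constant independent of $\varepsilon$. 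The a priori weight $e^{2|\rho|z}$ is indispensable for controlling the truncation tail at the sharp rate $\delta^{z/d}$; by contrast, the bookkeeping of the remaining constants (the precise power of $d$ in the exponent and the factor $3$) is routine once this weighted backward-Gr\"onwall structure is in place.
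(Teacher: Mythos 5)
Your proposal is correct and takes essentially the same route as the paper's proof: you introduce the same intermediate object (the regularized solution for exact data, which the paper calls $u^{\varepsilon}$), split by the triangle inequality, control the truncation tail via the a priori weight $e^{2|\rho|z}$ and the interior error via the kernel bounds $\cosh\left(\left(d-z\right)\left|\rho\right|\right)\le e^{\left(d-z\right)/\sqrt{\varepsilon}}$ and $\sinh\left(\left(s-z\right)\left|\rho\right|\right)/\left|\rho\right|\le d\,e^{\left(s-z\right)/\sqrt{\varepsilon}}$, and then apply exactly the paper's weighted substitution $e^{2z/\sqrt{\varepsilon}}$ with backward Gr\"onwall and the same balancing choice $\varepsilon=d^{2}/\ln^{2}\left(1/\delta\right)$, including the factor $3$ from $\left(a+b+c\right)^{2}\le3\left(a^{2}+b^{2}+c^{2}\right)$ that produces $e^{\frac{3}{2}d^{2}\ell_{f}^{2}\left(d-z\right)}$. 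The only deviations are cosmetic (cancelling $\delta^{2}e^{2\left(d-z\right)/\sqrt{\varepsilon}}=\delta^{2z/d}$ before rather than after Gr\"onwall), so the two arguments coincide.
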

\begin{proof}
In the same vein to Theorem \ref{thm:11}, we shall estimate $\left\Vert u\left(\cdot,z\right)-u^{\varepsilon}\left(\cdot,z\right)\right\Vert _{L^{2}\left(\mathbb{R}^{2}\right)}$
and $\left\Vert u^{\varepsilon}\left(\cdot,z\right)-u_{\delta}^{\varepsilon}\left(\cdot,z\right)\right\Vert _{L^{2}\left(\mathbb{R}^{2}\right)}$
by means of the Fourier transform. We recall that $u^{\varepsilon}$ is
the regularized solution for exact data, which is given by
\begin{eqnarray}
u^{\varepsilon}\left(\xi,z\right) & = & \int_{\rho\in\Theta_{\varepsilon}}\left(\hat{g}\left(\rho\right)\cosh\left(\left(d-z\right)\sqrt{\lambda_{\rho,0}}\right)+\frac{\hat{h}\left(\rho\right)\sinh\left(\left(d-z\right)\sqrt{\lambda_{\rho,0}}\right)}{\sqrt{\lambda_{\rho,0}}}\right.\nonumber\\
 &  & \left.+\int_{z}^{d}\frac{\sinh\left(\left(s-z\right)\sqrt{\lambda_{\rho,0}}\right)}{\sqrt{\lambda_{\rho,0}}}\hat{F}\left(\rho,s,u^{\varepsilon} \right)ds\right)e^{2\pi i\left\langle \xi,\rho\right\rangle }d\rho.\label{eq:3.27}
\end{eqnarray}
Notice that the following inequalities hold for $\rho\in\Theta_{\varepsilon}$ and $0\le z\le s\le d$
\begin{equation}
\cosh^{2}\left(\left(d-z\right)\sqrt{\lambda_{\rho,0}}\right)\le e^{2\left(d-z\right)\left|\rho\right|}\le e^{\frac{2\left(d-z\right)}{\sqrt{\varepsilon}}},\label{eq:cosh}
\end{equation}
\begin{equation}
\frac{\sinh^{2}\left(\left(s-z\right)\sqrt{\lambda_{\rho,0}}\right)}{\lambda_{\rho,0}}\le\left(s-z\right)^{2}e^{2\left(s-z\right)\left|\rho\right|}\le d^{2}e^{\frac{2\left(s-z\right)}{\sqrt{\varepsilon}}},\label{eq:sinh}
\end{equation}
Observing \eqref{uhatnonlinear},\eqref{eq:unon} and \eqref{eq:3.27}, by Parseval's identity we have
\begin{eqnarray*}
\left\Vert u\left(\cdot,z\right)-u^{\varepsilon}\left(\cdot,z\right)\right\Vert _{L^{2}\left(\mathbb{R}^{2}\right)}^{2} & = & \int_{\rho\in\Theta_{\varepsilon}}\left|\hat{u}\left(\rho,z\right)-\hat{u}^{\varepsilon}\left(\rho,z\right)\right|^{2}d\rho+\int_{\rho\in\mathbb{R}^{2}\backslash\Theta_{\varepsilon}}\left|\hat{u}\left(\rho,z\right)\right|^{2}d\rho\\
 & \le & \int_{\rho\in\Theta_{\varepsilon}}\left|\int_{z}^{d}\frac{\sinh\left(\left(s-z\right)\sqrt{\lambda_{\rho,0}}\right)}{\sqrt{\lambda_{\rho,0}}}\left[\hat{F}\left(\rho,s,u\right)-\hat{F}\left(\rho,s,u^{\varepsilon}\right)\right]ds\right|^{2}d\rho\\
 &  & +e^{\frac{-2z}{\sqrt{\varepsilon}}}\int_{\rho\in\mathbb{R}^{2}\backslash\Theta_{\varepsilon}}e^{2z\left|\rho\right|}\left|\hat{u}\left(\rho,z\right)\right|^{2}d\rho.
\end{eqnarray*}
Thus, we obtain
\begin{equation}
\left\Vert u\left(\cdot,z\right)-u^{\varepsilon}\left(\cdot,z\right)\right\Vert _{L^{2}\left(\mathbb{R}^{2}\right)}^{2}\le e^{\frac{-z}{\sqrt{\varepsilon}}}Q+d^{2}\ell_{f}^{2}\int_{z}^{d}e^{\frac{2\left(s-z\right)}{\sqrt{\varepsilon}}}\left\Vert u\left(\cdot,s\right)-u^{\varepsilon}\left(\cdot,s\right)\right\Vert _{L^{2}\left(\mathbb{R}^{2}\right)}^{2}ds,\label{eq:3.30-1}
\end{equation}
where we have used \eqref{eq:sinh} and the Lipschitz property of
$F$.

Multiplying both sides of \eqref{eq:3.30-1} by $e^{\frac{2z}{\sqrt{\varepsilon}}}$
and putting $w_{1}\left(z\right)=e^{\frac{2z}{\sqrt{\varepsilon}}}\left\Vert u\left(\cdot,z\right)-u^{\varepsilon}\left(\cdot,z\right)\right\Vert _{L^{2}\left(\mathbb{R}^{2}\right)}^{2}$,
we get
\[
w_{1}\left(z\right)\le Q+d^{2}\ell_{f}^{2}\int_{z}^{d}w_{1}\left(s\right)ds.
\]
By using Gronwall's inequality, we have
\[
w_{1}\left(z\right)\le Qe^{d^{2}\ell_{f}^{2}\left(d-z\right)}.
\]
This is equivalent to 
\begin{equation}
\left\Vert u\left(\cdot,z\right)-u^{\varepsilon}\left(\cdot,z\right)\right\Vert _{L^{2}\left(\mathbb{R}^{2}\right)}\le\sqrt{Q}e^{\frac{1}{2}d^{2}\ell_{f}^{2}\left(d-z\right)}e^{\frac{-z}{\sqrt{\varepsilon}}}.\label{eq:3.31-1}
\end{equation}

In the same manner, taking into account \eqref{eq:3.27} and \eqref{eq:uregunon},
we can deduce from \eqref{eq:cosh} and \eqref{eq:sinh} that

\begin{eqnarray}
\left\Vert u^{\varepsilon}\left(\cdot,z\right)-u_{\delta}^{\varepsilon}\left(\cdot,z\right)\right\Vert _{L^{2}\left(\mathbb{R}^{2}\right)}^{2} & \le & 3e^{\frac{2\left(d-z\right)}{\sqrt{\varepsilon}}}\int_{\mathbb{R}^{2}}\left|\hat{g}\left(\rho\right)-\hat{g}_{\delta}\left(\rho\right)\right|^{2}d\rho+3d^{2}e^{\frac{2\left(d-z\right)}{\sqrt{\varepsilon}}}\int_{\mathbb{R}^{2}}\left|\hat{h}\left(\rho\right)-\hat{h}_{\delta}\left(\rho\right)\right|^{2}d\rho\nonumber \\
 &  & +3d^{2}\left\Vert \int_{z}^{d}e^{\frac{s-z}{\sqrt{\varepsilon}}}\left|\hat{F}\left(\cdot,s,u^{\varepsilon}\right)-\hat{F}\left(\cdot,s,u_{\delta}^{\varepsilon}\right)\right|ds\right\Vert _{L^{2}\left(\mathbb{R}^{2}\right)}^{2}\nonumber \\
 & \le & 3\left(d^{2}+1\right)\delta^{2}e^{\frac{2\left(d-z\right)}{\sqrt{\varepsilon}}}+3d^{2}\ell_{f}^{2}\int_{z}^{d}e^{\frac{2\left(s-z\right)}{\sqrt{\varepsilon}}}\left\Vert u^{\varepsilon}\left(\cdot,s\right)-u_{\delta}^{\varepsilon}\left(\cdot,s\right)\right\Vert _{L^{2}\left(\mathbb{R}^{2}\right)}^{2}ds,\label{eq:3.30}
\end{eqnarray}
where we have applied the elementary inequality $\left(a+b+c\right)^{2}\le3\left(a^{2}+b^{2}+c^{2}\right)$
for $a,b,c\ge0$,  the Lipschitz property of $F$ and the assumption
\textbf{(A1)}.

Now, multiplying both sides of \eqref{eq:3.30} by $e^{\frac{2z}{\sqrt{\varepsilon}}}$
and putting $w_{2}\left(z\right)=e^{\frac{2z}{\sqrt{\varepsilon}}}\left\Vert u^{\varepsilon}\left(\cdot,z\right)-u_{\delta}^{\varepsilon}\left(\cdot,z\right)\right\Vert _{L^{2}\left(\mathbb{R}^{2}\right)}^{2}$,
we obtain the following:
\begin{equation}
w_{2}\left(z\right)\le3\left(d^{2}+1\right)\delta^{2}e^{\frac{2d}{\sqrt{\varepsilon}}}+3d^{2}\ell_{f}^{2}\int_{z}^{d}w_{2}\left(s\right)ds.\label{eq:3.31}
\end{equation}
Thanks to Gronwall's inequality, we can deduce from \eqref{eq:3.31} that
\[
w_{2}\left(z\right)\le3\left(d^{2}+1\right)\delta^{2}e^{\frac{2d}{\sqrt{\varepsilon}}+3d^{2}\ell_{f}^{2}\left(d-z\right)},
\]
Therefore, we conclude that
\begin{equation}
\left\Vert u^{\varepsilon}\left(\cdot,z\right)-u_{\delta}^{\varepsilon}\left(\cdot,z\right)\right\Vert _{L^{2}\left(\mathbb{R}^{2}\right)}\le\sqrt{3\left(d^{2}+1\right)}\delta e^{\frac{1}{2}\left(\frac{2}{\sqrt{\varepsilon}}+3d^{2}\ell_{f}^{2}\right)\left(d-z\right)}.\label{eq:esti2}
\end{equation}

As a side note, this result also guarantees the stability of our regularized solution. Indeed, combining \eqref{eq:esti2} and \eqref{eq:3.31-1} together with $\varepsilon=\dfrac{d^{2}}{\ln^{2}\left(\frac{1}{\delta}\right)}$,
we obtain
\begin{eqnarray*}
\left\Vert u\left(\cdot,z\right)-u_{\delta}^{\varepsilon}\left(\cdot,z\right)\right\Vert _{L^{2}\left(\mathbb{R}^{2}\right)} & \le & \left\Vert u\left(\cdot,z\right)-u^{\varepsilon}\left(\cdot,z\right)\right\Vert _{L^{2}\left(\mathbb{R}^{2}\right)}+\left\Vert u^{\varepsilon}\left(\cdot,z\right)-u_{\delta}^{\varepsilon}\left(\cdot,z\right)\right\Vert _{L^{2}\left(\mathbb{R}^{2}\right)}\\
 & \le & \sqrt{Q}e^{\frac{1}{2}d^{2}\ell_{f}^{2}\left(d-z\right)}e^{\frac{-z}{\sqrt{\varepsilon}}}+\sqrt{3\left(d^{2}+1\right)}\delta e^{\frac{1}{2}\left(\frac{2}{\sqrt{\varepsilon}}+3d^{2}\ell_{f}^{2}\right)\left(d-z\right)}\\
 & \le & \left(\sqrt{Q}e^{\frac{1}{2}d^{2}\ell_{f}^{2}\left(d-z\right)}+\sqrt{3\left(d^{2}+1\right)}e^{\frac{3}{2}d^{2}\ell_{f}^{2}\left(d-z\right)}\right)\delta^{\frac{z}{d}},
\end{eqnarray*}
which completes the proof of the theorem.
\end{proof}

\section{Numerical examples}\label{Sec:numerical}
\subsection{Example 1}
We
consider the problem \eqref{eq:helmholtz} in a mock-up, a nearly rectangular box, whose size is approximately 
1.0 m $\times$ 1.0 m $\times$ 0.5 m. It is worth noting that the speed of light 
 in free space is about $3\times10^{8}\;\mbox{m}\cdot s^{-1}$, 
and the wavelength $\lambda$ of a 100 MHz electromagnetic (radio)
wave is nearly 3.0 m. Hereby, this leads to the wave number $k=1/3$ $\mbox{m}^{-1}$.
Suppose that the forcing term $f$ almost agrees to zero outside the box whilst in the box we have
\[
f\left(x,y,z\right)=-xy\left(z-\frac{1}{2}\right)^{2}\left(12+\frac{1}{9}\left(z-\frac{1}{2}\right)^{2}\right),
\]
and $g,h\equiv0$ in the whole field.

The solution is explicitly known by
$xy\left(z-\dfrac{1}{2}\right)^{4}$. To keep track of what we have done in Section \ref{Sec:truncate}, the presence of $h_{\delta}$ is not considered, and we provide herein the measured functions $f_{\delta}$ and $g_{\delta}$ inside the mock-up by
\[
f_{\delta}\left(x,y,z\right)=f\left(x,y,z\right)\left(1+\frac{\delta}{0.3167506677}\right),\quad g_{\delta}\left(x,y\right)=\delta.
\]

Let us denote by $\left(0,c\right)^{2}=\left(0,c\right)\times\left(0,c\right)$
where $c>0$. We aim at considering solutions in
terms of the Fourier transform in the case $A_{1}$ since this is the unstable case. From \eqref{eq:uhatdelta},
we have the following representation:
\begin{equation}
\hat{u}_{\delta}^{\varepsilon}\left(\rho,z\right)=\hat{g}_{\delta}^{\varepsilon}\left(\rho\right)\cosh\left(\left(\frac{1}{2}-z\right)\sqrt{\lambda_{\rho,\frac{1}{3}}}\right)-\frac{1}{\sqrt{\lambda_{\rho,\frac{1}{3}}}}\int_{z}^{\frac{1}{2}}\hat{f}_{\delta}^{\varepsilon}\left(\rho,s\right)\sinh\left(\left(s-z\right)\sqrt{\lambda_{\rho,\frac{1}{3}}}\right)ds,\label{eq:4.3}
\end{equation}
where we have expressed the Fourier coefficients by
\begin{equation}
\hat{g}_{\delta}^{\varepsilon}\left(\rho\right)=\delta\int_{\left(0,1\right)^{2}}e^{-2\pi i\left\langle \rho,\xi\right\rangle }d\xi,\quad\hat{f}_{\delta}^{\varepsilon}\left(\rho,s\right)=\int_{\left(0,1\right)^{2}}f\left(\xi,s\right)\left(1+\frac{\delta}{0.3167506677}\right)e^{-2\pi i\left\langle \rho,\xi\right\rangle }d\xi,\quad \rho\in\tilde{\Theta}_{\varepsilon}:=\Theta_{\varepsilon}\cap A_{1}.\label{eq:4.4}
\end{equation}

Simultaneously, we show the representation of the exact solution
\begin{equation}
\hat{u}\left(\rho,z\right)=\left(z-\dfrac{1}{2}\right)^{4}\int_{\left(0,1\right)^{2}}\xi_{1}\xi_{2}e^{-2\pi i\left\langle \rho,\xi\right\rangle }d\xi,\quad\rho\in A_{1}.\label{eq:4.5}
\end{equation}

Here, we do not pay more attention
to the integrals in \eqref{eq:4.4}, but the integral in \eqref{eq:4.3}
that runs from $z$ to $1/2$ with respect to $s$ should
be solved numerically. From the numerical point of view, we apply the Gauss-Legendre
quadrature method by using Legendre-Gauss nodes and weights on the
interval $\left[z_{0},1/2\right]$ with truncation order
$N\in\mathbb{N}$. Since $\left|\tilde{\Theta}_{\varepsilon}\right|\to\infty$ as $\varepsilon\to0^{+}$,
a uniform grid corresponding to $\rho\in\tilde{\Theta}_{\varepsilon}$
is arbitrarily generated by the partition $1/\left(30\sqrt{\varepsilon}\right)$. 

In a common manner, the whole process is established
as follows:
\begin{enumerate}
\item Define the \emph{a priori} constant $M_{0}$, then the regularization
parameter is explicitly computed by
\[
\varepsilon\left(\delta\right)=\left(\frac{1}{9}+4\ln^{2}\left(48\delta\right)\right)^{-1};
\]

\item Determine the set $\tilde{\Theta}_{\varepsilon}$, consider a fixed
$z_{0}$ for implementation and choose the truncation order $N$;
\item Generate a vector including $\rho=\left(\rho_{1},\rho_{2}\right)$
in $\tilde{\Theta}_{\varepsilon}$. Then, compute $\hat{u}_{\varepsilon}^{\delta}\left(\rho,z_{0}\right)$
and $\hat{u}\left(\rho,z_{0}\right)$ on this uniform grid;
\item Compute the $\ell_2$ error:
\[
E\left(z_{0}\right)=\sqrt{\frac{1}{\mbox{card}\left(\tilde{\Theta}_{\varepsilon}\right)}\sum_{\rho\in\tilde{\Theta}_{\varepsilon}}\left|\hat{u}\left(\rho,z_{0}\right)-\hat{u}_{\varepsilon}^{\delta}\left(\rho,z_{0}\right)\right|^{2}};
\]

\item Draw 3D graphs for the modulus of the solutions.
\end{enumerate}

\begin{figure}
\centering
\subfloat[Exact]{\includegraphics[scale=0.35]{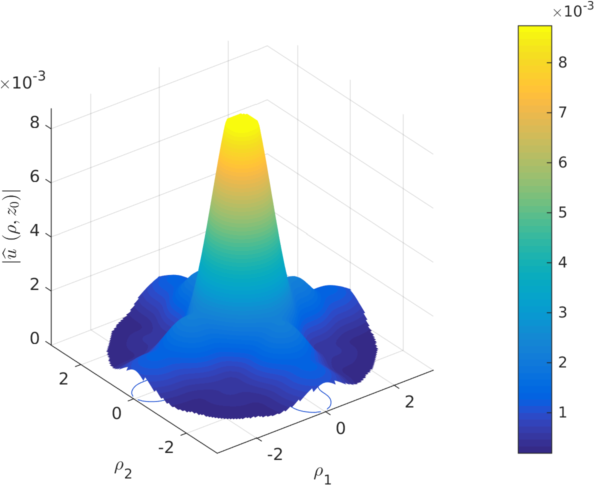}

}\subfloat[Regularized]{\includegraphics[scale=0.35]{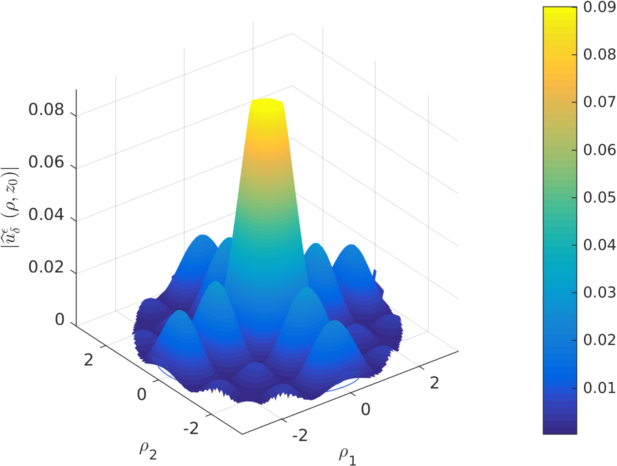}

}

\protect\caption{Modulus of the exact and regularized solutions with noise amplitude $\delta=10^{-1}$.\label{fig:1}}

\end{figure}

\begin{figure}
\centering
\subfloat[Exact]{\includegraphics[scale=0.35]{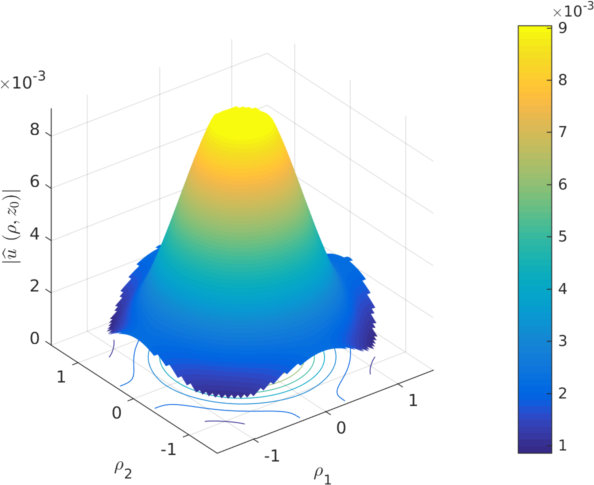}

}\subfloat[Regularized]{\includegraphics[scale=0.35]{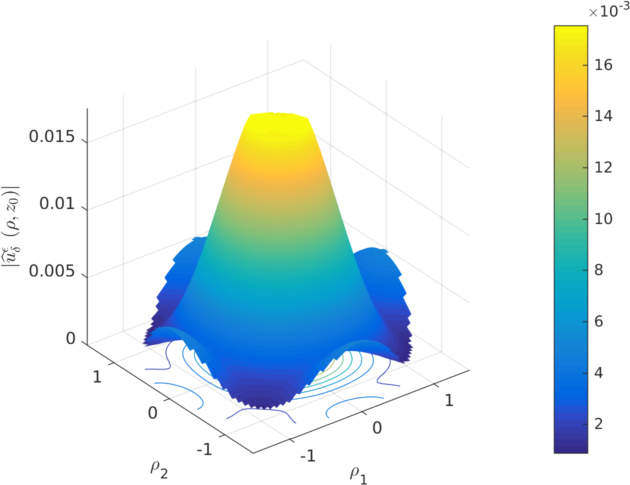}

}

\protect\caption{Modulus of the exact and regularized solutions with noise amplitude
 $\delta=10^{-2}$.\label{fig:2}}
\end{figure}

\begin{figure}
\centering
\subfloat[Exact]{\includegraphics[scale=0.35]{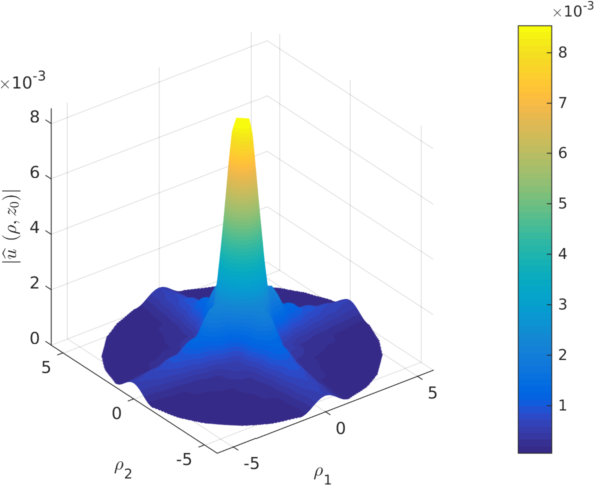}

}\subfloat[Regularized]{\includegraphics[scale=0.35]{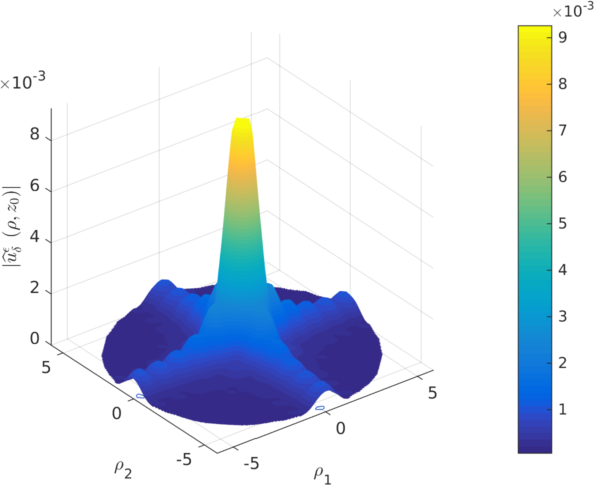}

}

\protect\caption{Modulus of the exact and regularized solutions with noise amplitude
 $\delta=10^{-3}$.\label{fig:3}}
\end{figure}

\begin{figure}
\centering
\subfloat[Exact]{\includegraphics[scale=0.35]{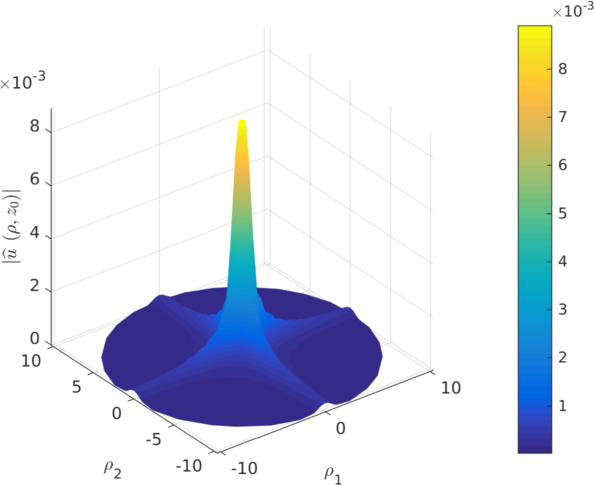}

}\subfloat[Regularized]{\includegraphics[scale=0.35]{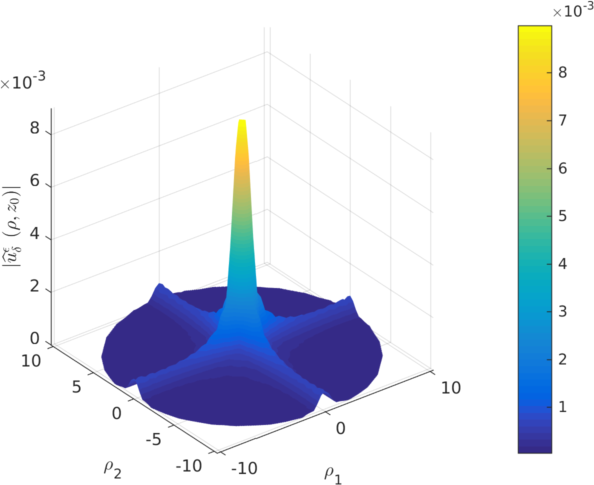}

}

\protect\caption{Modulus of the exact and regularized solutions with noise amplitude
 $\delta=10^{-4}$.\label{fig:4}}
\end{figure}


In what follows, we consider numerical solutions of \eqref{eq:4.3}
and \eqref{eq:4.5} in a 3D computational dimensionless domain of
$\tilde{\Theta}_{\varepsilon}\times\left(0,1/2\right]$, i.e.
$d=1/2$. In computational process, we choose $N=5$ and
let $M_{0}=1/48$ due to the energy $\left\Vert u\left(\cdot,0\right)\right\Vert _{L^{2}\left(\mathbb{R}^{2}\right)}$
from the exact solution. Table \ref{tab:1}
gives the error between the exact and regularized solutions
by means of the Fourier transform at each point $z_{0}$. Besides, the approximation of the regularized solution can be seen in Figure \ref{fig:1}-\ref{fig:4} where $z_{0}=0.05$ is considered.
Additionally, the color bars in these figures show the accuracy of the approximation when $\delta$
is smaller and smaller. Moreover, one may see that the region
becomes greater in width, then it explains clearly how the reconstruction works.

We also go further by presenting the method in \cite{Xiong10} where
the author obtained the same convergence rate. Due to the
fact that we are implementing the case $\rho\in\tilde{\Theta}_{\varepsilon}$,
the solution modified by that method can be viewed as a successful
combination of the truncation and quasi-boundary methods and thus
it is given by the following:
\begin{equation}
\hat{u}_{\delta}^{\varepsilon}\left(\rho,z\right)=\hat{g}_{\delta}^{\varepsilon}\left(\rho\right)\frac{\cosh\left(\left(\frac{1}{2}-z\right)\sqrt{\lambda_{\rho,\frac{1}{3}}}\right)}{1+\alpha\cosh\left(\frac{1}{2}\sqrt{\lambda_{\rho,\frac{1}{3}}}\right)}-\frac{1}{\sqrt{\lambda_{\rho,\frac{1}{3}}}}\int_{z}^{\frac{1}{2}}\hat{f}_{\delta}^{\varepsilon}\left(\rho,s\right)\frac{\sinh\left(\left(s-z\right)\sqrt{\lambda_{\rho,\frac{1}{3}}}\right)-\alpha\sinh\left(\left(1-s+z\right)\sqrt{\lambda_{\rho,\frac{1}{3}}}\right)}{1+\alpha\sinh\left(\frac{1}{2}\sqrt{\lambda_{\rho,\frac{1}{3}}}\right)}ds,\label{eq:xiong}
\end{equation}
where $\alpha=\frac{\delta}{M_{0}}$ represents the regularization parameter.

As shown in Table \ref{tab:2}, the numerical results mostly keep unchanged (compared to Table \ref{tab:1} in the order of error). 
They also present a great convergence
nearby the final data with the same order. In addition, it is slightly
better than the corresponding results in Table \ref{tab:1} because of
the region $\tilde{\Theta}_{\varepsilon}$ whereas the results nearby
the original $z=0$ somewhat get slower. Thus, saying
that our speed of convergence is similar to their rate is possibly correct. 

\noindent \begin{center}
\begin{table}
\noindent \begin{centering}
\begin{tabular}{|c|c|c|c|c|}
\hline 
$\delta$ & $E\left(0.4\right)$ & $E\left(0.25\right)$ & $E\left(0.1\right)$ & $E_{2}\left(0.05\right)$\tabularnewline
\hline 
1.0E-01 & 1.40421792E-02 & 1.44104551E-02 & 1.53538523E-02 & 1.58767301E-02\tabularnewline
\hline 
1.0E-02 & 2.91444011E-03 & 2.94877381E-03 & 3.05224512E-03 & 3.11545162E-03\tabularnewline
\hline 
1.0E-03 & 7.61081237E-05 & 8.11322809E-05 & 1.06859508E-04 & 1.32533217E-04\tabularnewline
\hline 
1.0E-04 & 4.95852862E-06 & 6.51466288E-06 & 3.95372158E-05 & 7.97083661E-05\tabularnewline
\hline 
\end{tabular}
\par\end{centering}

\protect\caption{Numerical results in Example 1.\label{tab:1}}
\end{table}

\par\end{center}

\noindent \begin{center}
\begin{table}
\noindent \begin{centering}
\begin{tabular}{|c|c|c|c|c|}
\hline 
$\delta$ & $E\left(0.4\right)$ & $E\left(0.25\right)$ & $E\left(0.1\right)$ & $E_{2}\left(0.05\right)$\tabularnewline
\hline 
1.0E-01 & 2.10678921E-03 & 2.67347266E-03 & 1.36686573E-02 & 2.00115229E-02\tabularnewline
\hline 
1.0E-02 & 1.83489554E-03 & 1.67705992E-03 & 8.36907677E-03 & 1.23680304E-02\tabularnewline
\hline 
1.0E-03 & 6.78544187E-05 & 6.54322866E-05 & 3.03290363E-04 & 5.56245094E-04\tabularnewline
\hline 
1.0E-04 & 4.64459553E-06 & 4.49543976E-06 & 3.42596062E-05 & 6.95604817E-05\tabularnewline
\hline 
\end{tabular}
\par\end{centering}

\protect\caption{Numerical results from the regularized solution \eqref{eq:xiong} by Xiong et al. \cite{Xiong10}. \label{tab:2}}
\end{table}

\par\end{center}

\subsection{Example 2}
The numerical results presented in Table \ref{tab:3} are to implement the pseudo-nonlinear case where we do not mind the condition \textbf{(A3)} as well as $M_0$ or the energy $\left\Vert u\left(\cdot,0\right)\right\Vert _{L^{2}\left(\mathbb{R}^{2}\right)}$. Consequently, we choose herein $d=\pi/\sqrt{3}$ and $k=\sqrt{5}$ which imply $k\ge\frac{\pi}{2d}$. While we artificially stimulate a small quantity $\delta$, albeit without noise along with the data in this example, we gain the parameter $\varepsilon$ by the relation $\varepsilon=\dfrac{d^{2}}{\ln^{2}\left(\frac{1}{\delta}\right)}$. The exact solution and forcing term $F$ in this case are, respectively, tested by $e^{-\frac{1}{2}(x^2+y^2)}\cos\left(\frac{z\pi}{d}\right)$ and
\[
F(u)=-k^2u + (x^2 + y^2)e^{-\frac{1}{2}(x^2+y^2)}\cos\left(\frac{z\pi}{d}\right).
\]

After some arrangements, our regularized solution in the Fourier transform can be computed by
\begin{eqnarray}
\hat{u}^{\varepsilon}(\rho,z) 
& = &
-2\pi e^{-2\pi^2\left(\rho_{1}^2+\rho_{2}^2\right)}
\left[\cosh\left((d-z)\sqrt{\lambda_{\rho,0}}\right)\right.
\nonumber
\\
 & & \left. +
\frac{d\left(2-4\pi^2(\rho_{1}^2+\rho_{2}^2)\right)}{\pi^2 + \lambda_{\rho,0}d^2}
\left(\frac{\pi}{\sqrt{\lambda_{\rho,0}}}\sin\left(\frac{z\pi}{d}\right)\sinh((d-z)\sqrt{\lambda_{\rho,0}})
-d\left(1+\cos\left(\frac{z\pi}{d}\right)\cosh((d-z)\sqrt{\lambda_{\rho,0}})\right)\right)
\right]
\nonumber\\
& & -k^2\int_{z}^{d}\frac{\sinh\left((s-z)\sqrt{\lambda_{\rho,0}}\right)}{\sqrt{\lambda_{\rho,0}}}
\hat{u}^{\varepsilon}(\rho,s)ds\quad\text{for}\;\rho\in\Theta_{\varepsilon}.\label{eq:abc}
\end{eqnarray}

Despite dealing with the exact data, the problem is still difficult because of the integral equation \eqref{eq:abc}. There are many approaches to approximate this Volterra integral equation of second kind with respect to $z$ for each $\rho$. However,  since we are just in the part of numerical tests, it can be solved  by computing an approximate solution under construction of an inverse iterative scheme. The scheme can be derived from the direct scheme proposed in \cite{TTKT15}. In fact, we first denote $r(\rho,z)$ by the first and second quantities in (\ref{eq:abc}). If the starting point, denoted by $u^{\varepsilon}_{[M]}(\rho)$, is defined by $\hat{g}(\rho)=-2\pi e^{-2\pi^2\left(\rho_{1}^2+\rho_{2}^2\right)}$, we compute $u^{\varepsilon}_{[i]}(\rho)\approx u^{\varepsilon}(\rho,z_{i}),i=\overline{0,M-1}$ where the interval $[0,d]$ is divided into $M$ parts, as follows:
\begin{eqnarray}
\hat{u}^{\varepsilon}_{[i]}(\rho) &=&
 r(\rho,z_{i}) -k^2\sum_{i\le j\le M-1}\int_{z_j}^{z_{j+1}}\frac{\sinh\left((s-z_{i})\sqrt{\lambda_{\rho,0}}\right)}{\sqrt{\lambda_{\rho,0}}}\hat{u}^{\varepsilon}_{[j+1]}(\rho)ds\nonumber\\
&=& r(\rho,z_{i}) -\frac{k^2}{\lambda_{\rho,0}}\sum_{i\le j\le M-1}\left[\cosh\left((z_{i}-z_{j+1})\sqrt{\lambda_{\rho,0}}\right)
- \cosh\left((z_{i}-z_{j})\sqrt{\lambda_{\rho,0}}\right)\right]\hat{u}^{\varepsilon}_{[j+1]}(\rho).\nonumber
\end{eqnarray}

\noindent \begin{center}
\begin{table}
\noindent \begin{centering}
\begin{tabular}{|c|c|c|c|c|}
\hline 
$\delta$ & $E\left(1.45\right)$ & $E\left(1.08\right)$ & $E\left(0.90\right)$ & $E_{2}\left(0.36\right)$\tabularnewline
\hline 
1.0E-03 & 5.13693230E-02 & 2.91263157E-01 & 1.97712392E-01 & 2.39488777E-01\tabularnewline
\hline 
1.0E-05 & 1.64503956E-02 & 1.17228155E-01 & 5.19008818E-02 & 3.95350268E-02\tabularnewline
\hline 
1.0E-07 & 1.05357366E-02 & 4.53842881E-02 & 2.61078016E-02 & 2.29135650E-02\tabularnewline
\hline 
1.0E-09 & 5.22770995E-03 & 1.07845343E-02 & 1.37167983E-02 & 1.94734413E-02\tabularnewline
\hline 
\end{tabular}
\par\end{centering}

\protect\caption{Numerical results in Example 2.\label{tab:3}}
\end{table}

\par\end{center}

For testing, we use $M=50$ for all numerical results in this example. As shown in Table \ref{tab:3}, for smaller values of $\delta$ the error $E$ reduces but not drastically. It can be explained theoretically that the analysis we obtained in Theorem \ref{thm:17} yields the error estimate including functions that have large values indeed. It thus hinders the speed of the convergence. Additionally, the parameter $\varepsilon$ in this example may spread slower than the one in Example 1 because of the larger number $d$. To sum up,
our numerical implementation confirms the theoretical expectations.

\section{Conclusion and discussion}\label{Sec:con}

We have proposed a truncation method for the regularization of the non-homogeneous
Helmholtz equation in three dimensions along with Cauchy data. We prove that the problem \eqref{eq:helmholtz} is ill-posed in general. As the relation of $k$ and $d$ in \textbf{(A3)} appears, we have pointed out, from the mathematical point of view, the example (in Lemma \ref{lem:6}) that naturally leads to the catastrophic growth. That is our first novelty. The second is that our analysis not only guarantees an extension
of the method from the homogeneous cases in \cite{RR06} (and so far others in \cite{FFC11,Xiong10}) to the non-homogeneous
case, but also gives a short extension to the nonlinear case. Interestingly, all linear and nonlinear cases presented in Section \ref{Sec:truncate} and \ref{Sec:extension} respectively, are of the same convergence rate $\mathcal{O}\left(\delta^{\frac{z}{d}}\right)$. It is worth noting that while the energy-like restriction of the solution is required in the main theorem (Theorem \ref{thm:11}) of Section \ref{Sec:truncate}, there is no such dependence in Theorem \ref{thm:13} and that leads to the occurrence of Section \ref{Sec:extension}. Simple numerical examples are provided to indicate that the proposed method works well. We are also aware of the possibilities of solving the modified Helmholtz equation \cite{NTN13} and the elliptic sine-Gordon equation with Cauchy data in the present paper.

For our forthcoming aims of study, the impetus of analyzing issues related to numerical simulations is necessary. A few examples are in order: the comparison of GLSFEM, QSFEM and RFFEM for the problem of minimizing the pollution effect in \cite{DBB99}, the UWVF method for the ultrasound problems in \cite{HMKWH05}, the comparison of several boundary element regularization methods for the Cauchy problem of Helmholtz equations in \cite{Mar04}, an improved numerical algorithm for nonlinear self-focusing model of time-harmonic electromagnetic waves in optical propagation in \cite{FT05}, the FEM for the time-domain scattering problem in 2D cavities in \cite{VW02}, etc.

\section*{Acknowledgment}
A significant part of this work was carried out while the first author
was visiting the School of Mathematics and Statistics at The University
of New South Wales in Sydney. Their hospitality is gratefully acknowledged.
The authors also desire to thank the handling editor and anonymous
referees for their helpful comments on this paper.

\bibliography{mybib}

\end{document}